\documentclass[12pt]{amsart}
\usepackage{amsmath}
\usepackage{amsfonts}
\usepackage{amssymb}
\usepackage[all]{xy}           %xypic macro for latex2.09

\usepackage{bbding}
\usepackage{txfonts}
\usepackage[shortlabels]{enumitem}
\usepackage{ifpdf}
\ifpdf
  \usepackage[colorlinks,final,backref=page,hyperindex]{hyperref}
\else
  \usepackage[colorlinks,final,backref=page,hyperindex,hypertex]{hyperref}
\fi
\usepackage{tikz}

%======================================================================
    %was    1, 1.5 for double sp
%======================================================================
%%standard setting
%\topmargin -0.3truein \textheight 8.4truein
%\oddsidemargin 0.2truein
%\evensidemargin 0.2truein \textwidth 440pt
%======================================================================
%%little larger standard setting: good setting
\topmargin -.8cm \textheight 22.8cm \oddsidemargin 0cm \evensidemargin -0cm \textwidth 16.3cm
%========================================================================================%%wide
%%lower setting for 1920x1080
%%\topmargin -.9cm \textheight 21cm \oddsidemargin 0cm \evensidemargin -0cm \textwidth 16.3cm
%%%%%%%%%%%%%%

\makeatletter

%%%%%%%%%%%%%%%%%%%%%%%% Statements
\newtheorem{theorem}{Theorem}[section]
\newtheorem{prop}[theorem]{Proposition}
\newtheorem{defn}[theorem]{Definition}
\newtheorem{lemma}[theorem]{Lemma}
\newtheorem{coro}[theorem]{Corollary}
\newtheorem{prop-def}{Proposition-Definition}[section]

\newtheorem{exam}[theorem]{Example}

\newcommand{\nc}{\newcommand}

%%%%%%%%%%%%%%%%%%%%%%% symbols

\nc{\delete}[1]{{}}
\nc{\mmargin}[1]{}

%\delete{
\nc{\mlabel}[1]{\label{#1}}  % Use this to suppress names
\nc{\mcite}[2][]{\cite[#1]{#2}}  % Use this to suppress names
\nc{\mref}[1]{\ref{#1}}  % Use this to suppress names
\nc{\mbibitem}[1]{\bibitem{#1}} % Use this to show number
%}

\delete{
\nc{\mlabel}[1]{\label{#1}  % Use the next two lines to show names
{\hfill \hspace{1cm}{\bf{{\ }\hfill(#1)}}}}
\nc{\mcite}[1]{\cite{#1}{{\bf{{\ }(#1)}}}}  % Use this lines to show names
\nc{\mref}[1]{\ref{#1}{{\bf{{\ }(#1)}}}}  % Use this lines to show names
\nc{\mbibitem}[1]{\bibitem[\bf #1]{#1}} % Use this to show name
}

%%%%%%%%%%%%%%%%%  new commands

\nc{\difforg}{U_D}
\nc{\rbforg}{U_{RB}}
\nc{\multforg}{U_{M}}
\nc{\diffree}{{}_DF}
\nc{\difcof}{F_D}
\nc{\rbfree}{{}_{RB}F}
\nc{\rbcof}{F_{RB}}
\nc{\multfree}{{}_{M}F}
\nc{\multcof}{F_M}

\nc{\bin}[2]{ (_{\stackrel{\scs{#1}}{\scs{#2}}})}  %binomial coeff
\nc{\binc}[2]{(\!\! \begin{array}{c} \scs{#1}\\
    \scs{#2} \end{array}\!\!)}  %binomial coeff
\nc{\bincc}[2]{  ( {\scs{#1} \atop
    \vspace{-1cm}\scs{#2}} )}  %binomial coeff
\nc{\bs}{\bar{S}}
\nc{\la}{\longrightarrow}
\nc{\ot}{\otimes}
\nc{\rar}{\rightarrow}
\nc{\dar}{\downarrow}
\nc{\dap}[1]{\downarrow \rlap{$\scriptstyle{#1}$}}
\nc{\defeq}{\stackrel{\rm def}{=}}
\nc{\dis}[1]{\displaystyle{#1}}
\nc{\dotcup}{\ \displaystyle{\bigcup^\bullet}\ }
\nc{\hcm}{\ \hat{,}\ }
\nc{\hts}{\hat{\otimes}}
\nc{\hcirc}{\hat{\circ}}
\nc{\lleft}{[}
\nc{\lright}{]}
\nc{\curlyl}{\left \{ \begin{array}{c} {} \\ {} \end{array}
    \right .  \!\!\!\!\!\!\!}
\nc{\curlyr}{ \!\!\!\!\!\!\!
    \left . \begin{array}{c} {} \\ {} \end{array}
    \right \} }
\nc{\longmid}{\left | \begin{array}{c} {} \\ {} \end{array}
    \right . \!\!\!\!\!\!\!}
\nc{\ora}[1]{\stackrel{#1}{\rar}}
\nc{\ola}[1]{\stackrel{#1}{\la}}%${\Bbb Z}$
\nc{\scs}[1]{\scriptstyle{#1}} \nc{\mrm}[1]{{\rm #1}}
\nc{\dirlim}{\displaystyle{\lim_{\longrightarrow}}\,}
\nc{\invlim}{\displaystyle{\lim_{\longleftarrow}}\,}
\nc{\dislim}[1]{\displaystyle{\lim_{#1}}} \nc{\colim}{\mrm{colim}}
\nc{\mvp}{\vspace{0.3cm}} \nc{\tk}{^{(k)}} \nc{\tp}{^\prime}
\nc{\ttp}{^{\prime\prime}} \nc{\svp}{\vspace{2cm}}
\nc{\vp}{\vspace{8cm}}
%\nc{\proofend}{$\blacksquare$\vspace{0.3cm}}
\nc{\modg}[1]{\!<\!\!{#1}\!\!>}
%\nc{\intg}[1]{\lceil{#1}\rceil}  %old free int ring
\nc{\intg}[1]{F_C(#1)}
\nc{\lmodg}{\!<\!\!}
\nc{\rmodg}{\!\!>\!}
\nc{\cpi}{\widehat{\Pi}}
%\nc{\sha}{\scs{\mbox{\cyr X}}} %used to be \cyr
\nc{\sha}{{\mbox{\cyr X}}}  %used to be \cyr
\nc{\ssha}{{\mbox{\cyrs X}}} %sha as product
\nc{\tsha}{{\mbox{\cyrt X}}}
%\nc{\shpr}{}
\nc{\shpr}{\diamond}    %Shuffle product
\nc{\labs}{\mid\!}
\nc{\rabs}{\!\mid}

\font\cyr=wncyr10
\font\cyrs=wncyr7
\font\cyrt=wncyr5

%%%%%%%%%%%%%%%%%%%%% roman fonts, in alphabetic order
\nc{\ann}{\mrm{ann}}
\nc{\Aut}{\mrm{Aut}}
\nc{\can}{\mrm{can}}
\nc{\Cont}{\mrm{Cont}}
\nc{\rchar}{\mrm{char}}
\nc{\cok}{\mrm{coker}}
\nc{\dtf}{{R-{\rm tf}}}
\nc{\dtor}{{R-{\rm tor}}}

\nc{\Div}{{\mrm Div}}
\nc{\End}{\mrm{End}}
\nc{\Ext}{\mrm{Ext}}
\nc{\Fil}{\mrm{Fil}}
\nc{\Fr}{\mrm{Fr}}
\nc{\Frob}{\mrm{Frob}}
\nc{\Gal}{\mrm{Gal}}
\nc{\GL}{\mrm{GL}}
\nc{\Hom}{\mrm{Hom}}
\nc{\hsr}{\mrm{H}}
\nc{\hpol}{\mrm{HP}}
\nc{\id}{\mrm{id}}
\nc{\im}{\mrm{im}}
\nc{\incl}{\mrm{incl}}
\nc{\length}{\mrm{length}}
\nc{\mchar}{\rm char}
\nc{\mpart}{\mrm{part}}
\nc{\ql}{{\QQ_\ell}}
\nc{\qp}{{\QQ_p}}
\nc{\rank}{\mrm{rank}}
\nc{\rcot}{\mrm{cot}}
\nc{\rdef}{\mrm{def}}
\nc{\rdiv}{{\rm div}}
\nc{\rtf}{{\rm tf}}
\nc{\rtor}{{\rm tor}}
\nc{\res}{\mrm{res}}
\nc{\SL}{\mrm{SL}}
\nc{\Spec}{\mrm{Spec}}
\nc{\tor}{\mrm{tor}}
\nc{\Tr}{\mrm{Tr}}
\nc{\tr}{\mrm{tr}}

%%%%%%%%%%%%%%%%%% bold face
\nc{\ab}{\mathbf{Ab}}
\nc{\DRB}{\mathbf{DRB}}
\nc{\Alg}{{\mathbf{Alg}}}
\nc{\ALG}{{\mathbf{ALG}}}
\nc{\RB}{\mathbf{RB}}
\nc{\RBA}{\mathbf{RBA}}
\nc{\bfk}{{\bf k}}
\nc{\bfone}{{\bf 1}}
\nc{\detail}{\marginpar{\bf More detail}
    \noindent{\bf Need more detail!}
    \svp}
\nc{\Diff}{\mathbf{Diff}}
\nc{\gap}{\marginpar{\bf Incomplete}\noindent{\bf Incomplete!!}
    \svp}
\nc{\FMod}{\mathbf{FMod}}
\nc{\Int}{\mathbf{Int}}
\nc{\Mon}{\mathbf{Mon}}
\nc{\Mult}{{\mathbf{Mlt}}}
%\nc{\proof}{\noindent{\bf Proof: }}
%\nc{\remark}{\noindent{\bf Remark: }}
\nc{\remarks}{\noindent{\bf Remarks: }}
\nc{\Rep}{\mathbf{Rep}}
\nc{\Rings}{\mathbf{Rings}}
\nc{\Sets}{\mathbf{Sets}}

%%%%%%%%%%%%%%%%%%%Bbb fonts
\nc{\BA}{{\mathbb A}}
\nc{\CC}{{\mathbb C}}
\nc{\DD}{{\mathbb D}}
\nc{\EE}{{\mathbb E}}
\nc{\FF}{{\mathbb F}}
\nc{\GG}{{\mathbb G}}
\nc{\HH}{{\mathbb H}}
\nc{\LL}{{\mathbb L}}
\nc{\NN}{{\mathbb N}}
\nc{\PP}{{\mathbb P}}
\nc{\QQ}{{\mathbb Q}}
\nc{\RR}{{\mathbb R}}
\nc{\TT}{{\mathbb T}}
\nc{\VV}{{\mathbb V}}
\nc{\ZZ}{{\mathbb Z}}

%%%%%%%%%%%%%%%%%%% cal fonts

\nc{\cala}{{\mathcal A}}
\nc{\calc}{{\mathcal C}}
\nc{\cald}{\mathcal{D}}
\nc{\cale}{{\mathcal E}}
\nc{\calf}{{\mathcal F}}
\nc{\calg}{{\mathcal G}}
\nc{\calh}{{\mathcal H}}
\nc{\cali}{{\mathcal I}}
\nc{\call}{{\mathcal L}}
\nc{\calm}{{\mathcal M}}
\nc{\caln}{{\mathcal N}}
\nc{\calo}{{\mathcal O}}
\nc{\calp}{{\mathcal P}}
\nc{\calr}{{\mathcal R}}
\nc{\cals}{{\mathcal S}}
\nc{\calt}{{\mathcal T}}
\nc{\calw}{{\mathcal W}}
\nc{\calx}{{\mathcal X}}
\nc{\CA}{\mathcal{A}}

%%%%%%%%%%%%%%%%%%  frak fonts
\nc{\fraka}{{\mathfrak a}}
\nc{\frakb}{\mathfrak{b}}
\nc{\frakB}{{\frak B}} \nc{\frakm}{{\frak
m}} \nc{\frakM}{{\frak M}}
\nc{\frakp}{{\frak p}}
\nc{\frakS}{{\frak S}} \nc{\frakA}{{\frak A}} \nc{\frakx}{{\frak
x}}

\nc{\Dif}{\mathbf{Dif}}
\nc{\DIF}{\mathbf{DIF}}
\nc{\G}{\mathbf{G}}
\nc{\C}{\mathbf{C}}
\nc{\A}{\mathbf{A}}
\nc{\B}{\mathbf{B}}
\nc{\T}{\mathbf{T}}

\begin{document}
\title[Monads and distributive laws]{
Monads and distributive laws for Rota-Baxter and differential algebras}

\author{Li Guo}
\address{
Department of Mathematics and Computer Science,
Rutgers University,
Newark, NJ 07102, USA}
\email{liguo@newark.rutgers.edu}

\author{William Keigher}
\address{
Department of Mathematics and Computer Science,
Rutgers University,
Newark, NJ 07102, USA}
\email{keigher@newark.rutgers.edu}

\author{Shilong Zhang}
\address{Department of Mathematics, Lanzhou University, Lanzhou, Gansu, 730000, China}
\email{2663067567@qq.com}

\date{\today}

\begin{abstract}
In recent years, algebraic studies of the differential calculus and integral calculus in the forms of differential algebra and Rota-Baxter algebra have been merged together to reflect the close relationship between the two calculi through the First Fundamental Theorem of Calculus.
In this paper we study this relationship from a categorical point of view in the context of distributive laws which can be tracked back to the distributive law of multiplication over addition. The monad giving Rota-Baxter algebras and the comonad giving differential algebras are constructed. Then a mixed distributive law of the monad over the comonad is established. As a consequence, we obtain monads and comonads giving the composite structures of differential and Rota-Baxter algebras.
\end{abstract}

% MSC Numbers: Primary 18C15, 13N99, 16W99.}

\keywords{
differential algebra, Rota-Baxter algebra, differential Rota-Baxter algebra, monad, comonad, distributive law
}

\maketitle

\tableofcontents

\setcounter{section}{0}

\allowdisplaybreaks

\section{Introduction}
In this paper, we study a monad giving Rota-Baxter algebras and a comonad giving differential algebras, and a mixed distributive law of this monad over this comonad.  We also give significant consequences of this mixed distributive law.

For the last few decades, differential algebra, first of weight zero and then of weight $\lambda$, and its integral counter part, Rota-Baxter algebra of weight $\lambda$, have attracted much attention, with broad applications. Started as an algebraic study of differential equations, differential algebra has become the theoretical foundation of W.-T. Wu's ground breaking work on mechanical proofs of geometric theorems~\mcite{Wu2}. It is also closely related to symbolic computation~\mcite{RR,RRTB,Hoe}. Originated in the probabilistic study of G. Baxter in 1960~\mcite{Ba}, Rota-Baxter algebra is the algebraic abstraction of integration and has found surprising applications in integrable systems and quantum field theory, being one of the two fundamental algebraic structures (the other one being Hopf algebra) in the algebraic approach of Connes-Kreimer on renormalization of quantum field theory~\mcite{Bai,C-K1,EGM,GZ}. See~\mcite{Gub,Rot3,RS} for general discussions on Rota-Baxter algebras. Recently, the coupling of Rota-Baxter and differential algebras has been made apparent as differential Rota-Baxter algebra~\mcite{GK3} and as integro-differential algebra~\mcite{GRR,RR}, both motivated by the first fundamental theorem of calculus and the integration by parts formula. The latter, being a stronger version of the former, has its motivation and applications in algebraic and computational study of boundary value problems of ordinary and partial differential equations~\mcite{RRTB}.

In this paper, we justify the interaction of the Rota-Baxter (integral) operator and the differential operator from a categorical point of view by providing a mixed distributive law between the monad and comonad giving these two operators.

The concept of a distributive law between monads was introduced by Beck in his seminal work~\cite{Be} as a categorical generalization of the distributive law of multiplication over addition. In general, a distributive law relates two monads arising from free constructions. Since Beck's work, distributive laws have been studied in many contexts and generalities, such as module theory, category theory, higher category theory, combinatorics, operads, Yang-Baxter equations, quantum groups, computational mathematics, computer science and information science~\mcite{BS,Bo,BLS,Bou,BLV,BV,Ch,DK,Ga,HM,HP,Ja,LS,MRW,RW,St,Web,Wo}. The free construction of Rota-Baxter algebras and the cofree construction of differential algebras provide a natural and non-trivial context for a mixed distributive law to be determined. This is the main purpose of the current paper. The existence of this mixed distributive law suggests a close connection between the corresponding algebraic structures.
We will also describe the significant consequences of the existence of this mixed distributive law.

In Section~\mref{sec:rba} we recall the construction of free Rota-Baxter algebras by the mixable shuffle (quasi-shuffle) product and obtain the monad giving Rota-Baxter algebras over algebras. In Section~\mref{sec:dif} we similarly recall the construction of cofree differential algebras (with a weight) and obtain the comonad giving differential algebras over algebras. Then in Section~\mref{sec:mix}, the mixed distributive law of the monad giving Rota-Baxter algebras on algebras over the comonad giving differential algebras on algebras is obtained in Theorem~\mref{thm:cddist}. Applications are given in Section~\mref{sec:diffrba} to differential Rota-Baxter algebras obtained by combining Rota-Baxter algebras and differential algebras through the first fundamental theorem of calculus.

Throughout the paper, we fix a commutative ring $\bfk$ with identity, and we fix an element $\lambda \in \bfk$.  All algebras we consider will be commutative
$\bfk$-algebras with identity, and all homomorphisms of algebras will be $\bfk$-algebra homomorphisms that preserve the identity. Likewise all linear maps and tensor products will be taken over $\bfk$. Thus references to $\bfk$ will be suppressed unless a specific $\bfk$ is emphasized or a reminder is needed.
We write $\NN$ for the additive monoid of
natural numbers $\{0,1,2,\ldots\}$ and
$\NN_+=\{ n\in \NN\mid n>0\}$ for the positive integers.
In this paper, we use the categorical notations as in ~\cite{Ma}.

\section{Rota-Baxter algebras and monads}
\mlabel{sec:rba}
In this section, we recall the concept of a Rota-Baxter algebra and the construction of free Rota-Baxter algebras by the mixable shuffle product. See~\mcite{Gub,GK1} for details. We then give the monad whose algebras are the Rota-Baxter algebras.

\subsection{Free Rota-Baxter algebras}
We begin with the definition of Rota-Baxter algebras.
\begin{defn}
Let $R$ be an algebra.
\begin{enumerate}
\item
A {\bf Rota-Baxter operator of weight $\lambda$ on $R$},
or simply a {\bf Rota-Baxter operator} on $R$
is a $\bfk$-linear endomorphism $P$ of $R$ satisfying
\begin{equation}
 P(x)P(y)=P(xP(y))+P(yP(x))+\lambda P(xy),\ \text{ for all } x,\ y\in R.
\mlabel{eq:bax1}
\end{equation}
\item
    A {\bf Rota-Baxter algebra of weight $\lambda$},
or simply a {\bf Rota-Baxter algebra},
is a pair $(R,P)$ where $R$ is an algebra
and $P$ is a Rota-Baxter operator on $R$.
\item
Let $(R,P)$ and $(S,Q)$ be two Rota-Baxter algebras.
A {\bf homomorphism of Rota-Baxter algebras}
$f:(R,P)\rar (S,Q)$ is a homomorphism
$f: R \rar S$ of algebras with the property that
$ f(P(x))=Q(f(x))$
for all $x \in R$.
\end{enumerate}
\end{defn}

\begin{exam}
Let $\Cont(\RR)$ denote the $\RR$-algebra of continuous functions on $\RR$.
For a given $\lambda \in \RR, \lambda>0$, let $R$ be an $\RR$-subalgebra of $\Cont(\RR)$ that is closed under the operators
$$P_0(f)(x)=-\int_x^\infty f(t)dt,\quad  P_\lambda(f)(x)=-\lambda\sum_{n\geq 0} f(x+n\lambda).$$
For example, $R$ can be taken to be the $\RR$-subalgebra generated by $e^{-x}$: $R=\sum_{k\geq 1} \RR e^{-kx}$.
Then $P_0$ is a Rota-Baxter operator of weight $0$, and $P_\lambda$ is a Rota-Baxter operator of weight $\lambda$.
\mlabel{ex:int}
\end{exam}

Let $\RBA_{\bfk,\lambda}$, or simply $\RBA$, denote the category
of commutative Rota-Baxter algebras of weight $\lambda$, and let $\ALG_{\bfk}$ or $\ALG$ denote the category of commutative algebras.  Let $U: \RBA \rar \ALG$
denote the forgetful functor given on objects $(R,P) \in \RBA$
by $U(R,P) = R$ and on morphisms $f:(R,P)\rar (S,Q)$ in $\RBA$
by $U(f) = f:R \rar S$.  In~\cite{GK1}, we proved that $U$ has a
left adjoint, and we gave an explicit description of the left
adjoint, the free commutative Rota-Baxter algebra functor.
See~\cite{Ca,Rot1} for earlier constructions of free commutative Rota-Baxter algebras on sets, that is, as a left adjoint of the forgetful functor from $\RBA$ to the category of sets.

We recall from~\cite{GK1} some general observations
about the free commutative Rota-Baxter algebra of weight $\lambda$ on a commutative
algebra $A$ with identity $\bfone_A$. The product for this free Rota-Baxter algebra on $A$ is constructed in terms of a generalization of the shuffle product, called the mixable shuffle product which in its recursive form is a natural generalization of the quasi-shuffle product~\cite{Ho} that we will describe below. This free commutative Rota-Baxter algebra on $A$ is denoted by $\sha(A)$. As a module, we have
$$\sha(A) = \bigoplus\limits_{i\geq 1}A^{\otimes i}=A\oplus (A\otimes A)\oplus (A\otimes A\otimes A)\oplus\cdots$$ where the tensors are defined over $\bfk$. The multiplication on $\sha(A)$ is the product $\diamond$ defined as follows. Let $\mathfrak{a}=a_0\otimes \cdots \otimes a_m\in A^{\otimes(m+1)}$ and $\mathfrak{b}=b_0\otimes \cdots \otimes b_n\in A^{\otimes(n+1)}$. If $mn=0$, define

\begin{equation}
\mathfrak{a}\diamond \mathfrak{b}=
\begin{cases}
(a_0b_0)\otimes b_1 \otimes\cdots \otimes b_n, & m=0, n>0, \\
(a_0b_0)\otimes a_1 \otimes\cdots \otimes a_m, & m>0, n=0, \\
a_0b_0,                                        & m=n=0. \\
\end{cases}
\mlabel{eq:mshprod1}
\end{equation}
If $m>0$ and $n>0$, then $\mathfrak{a}\diamond \mathfrak{b}$ is defined inductively on $m+n$ by
\begin{eqnarray}
&&(a_0b_0)\otimes\Big((a_1\otimes\cdots\otimes a_m)\diamond (\bfone_A\otimes b_1\otimes \cdots b_n)
                 +(\bfone_A\otimes a_1\otimes \cdots\otimes a_m)\diamond (b_1\otimes \cdots b_n)\notag\\
&&                 +\lambda (a_1\otimes\cdots\otimes a_m)\diamond (b_1\otimes \cdots b_n)\Big).
\mlabel{eq:mshprod2}
\end{eqnarray}

Extending by additivity, $\shpr$ gives a $\bfk$-bilinear map

\[ \shpr: \sha(A) \times \sha(A) \rar \sha(A). \]
Since the product $\shpr$ restricts to the product on $A$, we will usually suppress the symbol $\shpr$ and simply denote $x y$ for $x\shpr y$ in $\sha(A)$.

As an example of computing the product, we consider
$$
(a_0\ot a_1) (b_0\ot b_1\ot b_2) =
(a_0b_0)\ot \Big( a_1 (\bfone_A\ot b_1\ot b_2) + (\bfone_A\ot a_1) (b_1\ot b_2) +\lambda(a_1 (b_1\ot b_2))\Big).
$$
By Eq.~(\mref{eq:mshprod1}), the first and third terms in the right tensor factor are just $a_1\ot b_1\ot b_2$ and $\lambda a_1b_1\ot b_2$ respectively. For the second term, we have
$$
(\bfone_A\ot a_1) (b_1\ot b_2) = b_1 \ot \left(a_1 (\bfone_A\ot b_2) + (\bfone_A\ot a_1) b_2 +\lambda a_1 b_2\right)
= b_1\ot (a_1\ot b_2 + b_2\ot a_1 +\lambda a_1b_2).
$$
Thus we obtain
$$
(a_0\ot a_1) (b_0\ot b_1\ot b_2) = (a_0b_0)\ot (a_1\ot b_1\ot b_2 + b_1\ot a_1\ot b_2 + b_1\ot b_2\ot a_1 + \lambda b_1\ot a_1b_2 + \lambda a_1b_1\ot b_2).
$$

Define a linear endomorphism $P_A$ on
$\sha(A)$ by assigning
\[ P_A( x_0\otimes x_1\otimes \ldots \otimes x_n)
=\bfone_A\otimes x_0\otimes x_1\otimes \ldots\otimes x_n, \]
for all
$x_0\otimes x_1\otimes \ldots\otimes x_n\in A^{\otimes (n+1)}$
and extending by additivity.
Let $j_A:A\rar \sha(A)$ be the canonical inclusion map.
We proved the following theorem in~\cite{GK1}.

\begin{theorem}
\begin{enumerate}
\item
The module $\sha(A)$, together with the multiplication
$\shpr$, is an algebra which will still be denoted by
$\sha(A)$.
\item
$(\sha(A),P_A)$, together with the natural embedding
$j_A:A\rightarrow \sha(A)$,
is a free Rota-Baxter algebra on $A$.
In other words,
for any Rota-Baxter algebra $(R,P)$ and any
algebra homomorphism
$\varphi:A\rar R$, there exists
a unique Rota-Baxter algebra homomorphism
$\tilde{\varphi}:(\sha(A),P_A)\rar (R,P)$ such that
$\varphi = U(\tilde{\varphi}) \circ j_A$.
\end{enumerate}
\mlabel{thm:shua}
\end{theorem}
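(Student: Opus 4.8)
The plan is to split the argument along the two assertions, organizing essentially everything around inductions on the total tensor degree $m+n$. For part (1), I would first observe that $\bfone_A \in A = A^{\otimes 1}$ is a two-sided identity for $\shpr$: this is immediate from the $m=0$ and $n=0$ cases of Eq.~(\mref{eq:mshprod1}). Commutativity of $\shpr$ I would prove by induction on $m+n$; the base case $mn=0$ follows from commutativity of $A$ in Eq.~(\mref{eq:mshprod1}), and in the inductive step the three summands of Eq.~(\mref{eq:mshprod2}) for $\mathfrak{a}\shpr\mathfrak{b}$ match, after applying the inductive hypothesis, the three summands for $\mathfrak{b}\shpr\mathfrak{a}$ (the first and second terms exchange roles, the third is symmetric), using $a_0 b_0 = b_0 a_0$. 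The genuinely delicate point is associativity, $(\mathfrak{a}\shpr\mathfrak{b})\shpr\mathfrak{c} = \mathfrak{a}\shpr(\mathfrak{b}\shpr\mathfrak{c})$, which I would again attack by induction on the total degree: expanding both sides via Eq.~(\mref{eq:mshprod2}) produces a common outer factor $a_0 b_0 c_0$ and a sum of inner products to which the inductive hypothesis applies. The bookkeeping is routine but lengthy, and this is where I expect the main effort to lie. (Alternatively, one can transport the problem to the ordinary shuffle product via an exponential-type isomorphism absorbing the weight $\lambda$, reducing quasi-shuffle associativity to shuffle associativity.)

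For the first half of part (2), I would check directly that $P_A$ satisfies the Rota-Baxter identity~(\mref{eq:bax1}), and this in fact requires no induction. For $\mathfrak{a},\mathfrak{b}$, both $P_A(\mathfrak{a})$ and $P_A(\mathfrak{b})$ begin with $\bfone_A$, so applying Eq.~(\mref{eq:mshprod2}) to $P_A(\mathfrak{a})\shpr P_A(\mathfrak{b})$ yields $\bfone_A$ tensored with exactly $\mathfrak{a}\shpr P_A(\mathfrak{b}) + P_A(\mathfrak{a})\shpr\mathfrak{b} + \lambda\,\mathfrak{a}\shpr\mathfrak{b}$; pulling the leading $\bfone_A$ back out identifies this with $P_A(\mathfrak{a}\shpr P_A(\mathfrak{b})) + P_A(P_A(\mathfrak{a})\shpr\mathfrak{b}) + \lambda P_A(\mathfrak{a}\shpr\mathfrak{b})$. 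Thus Eq.~(\mref{eq:mshprod2}) is, in effect, a reformulation of the Rota-Baxter relation.

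For the universal property, the key structural observation is the factorization $x_0\otimes x_1\otimes\cdots\otimes x_n = x_0\shpr P_A\big(x_1\shpr P_A(\cdots\shpr P_A(x_n)\cdots)\big)$, which follows from the $m=0$ case of Eq.~(\mref{eq:mshprod1}). This shows that $(\sha(A),P_A)$ is generated as a Rota-Baxter algebra by $j_A(A)$, forcing uniqueness of any Rota-Baxter homomorphism $\tilde{\varphi}$ with $\varphi = U(\tilde{\varphi})\circ j_A$: necessarily
\[ \tilde{\varphi}(x_0\otimes\cdots\otimes x_n) = \varphi(x_0)\,P\big(\varphi(x_1)\,P(\cdots P(\varphi(x_n))\cdots)\big). \]
For existence I would take this as the definition of $\tilde{\varphi}$, extend by additivity, and verify three things: that $\varphi = U(\tilde{\varphi})\circ j_A$ (immediate from the $n=0$ case), that $\tilde{\varphi}\circ P_A = P\circ\tilde{\varphi}$ (immediate from the nested form), and that $\tilde{\varphi}$ is multiplicative, $\tilde{\varphi}(\mathfrak{a}\shpr\mathfrak{b}) = \tilde{\varphi}(\mathfrak{a})\tilde{\varphi}(\mathfrak{b})$.

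The last point is the real work of part (2). I would prove it by induction on $m+n$, applying $\tilde{\varphi}$ to Eq.~(\mref{eq:mshprod2}) and then invoking the Rota-Baxter identity~(\mref{eq:bax1}) in the target $(R,P)$ to recombine the three resulting image terms into the single product $\tilde{\varphi}(\mathfrak{a})\tilde{\varphi}(\mathfrak{b})$. In summary, associativity of $\shpr$ and multiplicativity of $\tilde{\varphi}$ are the two nontrivial inductions; they are essentially dual computations driven by the same three-term recursion, and I expect both to be the principal obstacles, the remaining verifications being direct consequences of the defining equations.
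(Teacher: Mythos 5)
Your proposal is correct in outline and in all the places where it commits to details. The key observations you isolate are exactly right: the identity $P_A(\mathfrak{a})\shpr P_A(\mathfrak{b})=P_A\big(\mathfrak{a}\shpr P_A(\mathfrak{b})\big)+P_A\big(P_A(\mathfrak{a})\shpr\mathfrak{b}\big)+\lambda P_A(\mathfrak{a}\shpr\mathfrak{b})$ is an immediate reading of Eq.~(\ref{eq:mshprod2}) since both factors have leading entry $\bfone_A$; the factorization $x_0\otimes\cdots\otimes x_n=x_0\shpr P_A(x_1\shpr P_A(\cdots P_A(x_n)\cdots))$ pins down $\tilde{\varphi}$ uniquely; and multiplicativity of the forced formula follows by induction on $m+n$, with the Rota-Baxter identity in $(R,P)$ recombining the three image terms. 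One should note, however, that the paper does not actually prove this theorem here: it states ``We proved the following theorem in~\cite{GK1}'' and defers entirely to that reference. In \cite{GK1} the product is not introduced by the recursion~(\ref{eq:mshprod2}) but by a closed combinatorial formula in terms of mixable shuffles, and associativity and freeness are established from that explicit description; your argument is instead the quasi-shuffle-style proof (in the spirit of Hoffman~\cite{Ho}) that works directly from the recursive definition the present paper gives. Each approach has its advantages: the mixable-shuffle formula makes the product manifestly well defined and symmetric, while your recursive route keeps the Rota-Baxter identity visibly built into the definition and makes the universal-property verification a one-line induction. The only place where your sketch leaves real work unexamined is associativity of $\shpr$; the induction does go through, and it is worth noting that your independently established Rota-Baxter identity for $P_A$ is precisely the lemma needed to handle the terms of the form $\bfone_A\otimes(\mathfrak{u}\shpr\mathfrak{v})$ that arise when expanding both sides of $(\mathfrak{a}\shpr\mathfrak{b})\shpr\mathfrak{c}=\mathfrak{a}\shpr(\mathfrak{b}\shpr\mathfrak{c})$.
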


Let $F:\ALG \rar \RBA$ denote the functor given on
objects $A \in \ALG$ by $F(A) = (\sha(A),P_A)$ and on morphisms
$f:A \rar B$ in $\ALG$ by $$F(f)\left(\sum^{k}_{i=1}a_{i0}\otimes a_{i1}\otimes\cdots \otimes a_{in_{i}}\right)=\sum^{k}_{i=1}f(a_{i0})\otimes f(a_{i1})\otimes\cdots \otimes f(a_{in_{i}})$$
which we also denote by $\sha(f)$. As above, $U:\RBA \rightarrow \ALG$ denotes the forgetful
functor defined on objects $(R,P) \in \RBA$ by $U(R,P) = R$ and
on morphisms $f:(R,P) \rightarrow (S,Q)$ in $\RBA$ by $U(f) = f$.
Next, we define two natural transformations $\eta: \id_{\ALG} \rightarrow U F$ and $\varepsilon: F U \rightarrow \id_{\RBA}$.
For any $A \in \ALG$, we define
$$\eta_{A}:A\rightarrow (U F)(A) = \sha(A)$$
to be just the natural embedding $j_A: A \rar \sha(A)$, and for any $(A,P) \in \RBA$, define
$$\varepsilon_{(A,P)}: (F  U)(A,P) = (\sha(A),P_A) \rightarrow (A,P)$$
by
$$\varepsilon_{(A,P)}\left(\sum^{k}_{i=1}a_{i0}\otimes a_{i1}\otimes\cdots \otimes a_{in_{i}}\right)=
\sum^{k}_{i=1}a_{i0}P(a_{i1}P(\cdots P(a_{in_{i}})\cdots)),$$
for any $\sum\limits_{i=1}^{k}a_{i0}\otimes a_{i1}\otimes\cdots \otimes a_{in_{i}} \in \sha(A).$

From a general principle of category theory~\mcite{Ma}, as an equivalent to Theorem~\ref{thm:shua}, we have

\begin{coro}
The functor $F:\ALG \rightarrow \RBA$ defined above is the left adjoint of the forgetful functor
$U:\RBA \rightarrow \ALG$. More precisely, there is an adjunction $\langle F, U, \eta, \varepsilon \rangle:\ALG\rightharpoonup\RBA$.
\end{coro}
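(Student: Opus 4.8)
The plan is to read off the adjunction from the universal property already recorded in Theorem~\mref{thm:shua}(b), using the standard equivalence (see~\cite{Ma}) among three packagings of an adjunction for the forgetful functor $U\colon\RBA\rar\ALG$: (i) a left adjoint $F$ with unit $\eta$ and counit $\varepsilon$ satisfying the two triangle identities; (ii) a universal arrow from $A$ to $U$ for every $A\in\ALG$; and (iii) a natural isomorphism $\Hom_{\RBA}(F(A),(R,P))\cong\Hom_{\ALG}(A,U(R,P))$. The content of Theorem~\mref{thm:shua}(b) is exactly (ii): for each $A$ the pair $((\sha(A),P_A),j_A)=(F(A),\eta_A)$ is a universal arrow from $A$ to $U$, since every algebra homomorphism $\varphi\colon A\rar U(R,P)$ factors uniquely through $\eta_A$ as $U(\tilde{\varphi})\circ\eta_A$. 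Thus the only thing left is to confirm that the functorial and counit data written down above are the ones produced by this universal property.

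First I would verify that $F$ is a functor and that $\eta$ is natural. Functoriality of $\sha(-)$ is immediate from the defining formula for $\sha(f)$ on pure tensors, and naturality of $\eta$ is the identity $\sha(f)\circ j_A=j_B\circ f$ for $f\colon A\rar B$, which holds because $\sha(f)$ restricts to $f$ on the degree-one summand $A\subset\sha(A)$. Given the universal arrows from (ii), the construction of a left adjoint in~\cite{Ma} yields a functor whose value on a morphism $f$ is forced, by the uniqueness clause of Theorem~\mref{thm:shua}(b), to coincide with $\sha(f)$; hence the $F$ defined above is the left adjoint.

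Next I would identify $\varepsilon$. By the general construction, the counit $\varepsilon_{(R,P)}$ is the unique $\RBA$-morphism $F(U(R,P))\rar(R,P)$ extending $\id_R$ along $\eta_R$, that is, $\varepsilon_{(R,P)}=\widetilde{\id_R}$, and the equation $U(\widetilde{\id_R})\circ\eta_R=\id_R$ is precisely the first triangle identity. To match $\widetilde{\id_R}$ with the explicit formula, note that in $\sha(R)$ one has $a_0\ot a_1\ot\cdots\ot a_n=a_0\shpr P_R(a_1\ot\cdots\ot a_n)$, because $P_R(a_1\ot\cdots\ot a_n)=\bfone_R\ot a_1\ot\cdots\ot a_n$ and then Eq.~(\mref{eq:mshprod1}) gives $a_0\shpr(\bfone_R\ot a_1\ot\cdots\ot a_n)=a_0\ot a_1\ot\cdots\ot a_n$. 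Applying the multiplicativity of $\widetilde{\id_R}$, the Rota-Baxter compatibility $\widetilde{\id_R}\circ P_R=P\circ\widetilde{\id_R}$, and $\widetilde{\id_R}|_R=\id_R$, an induction on $n$ yields $\widetilde{\id_R}(a_0\ot\cdots\ot a_n)=a_0P(a_1P(\cdots P(a_n)\cdots))$, which is the stated $\varepsilon_{(R,P)}$.

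I expect no substantial obstacle, since the genuinely hard content---the explicit free construction and its universal property---is already supplied by Theorem~\mref{thm:shua}, and the remaining work is bookkeeping. The one point deserving care is the second triangle identity $\varepsilon F\circ F\eta=\id_F$, which at $A$ asks that $\varepsilon_{F(A)}\circ\sha(\eta_A)=\id_{\sha(A)}$ on the iterated object $\sha(\sha(A))$. Rather than compute directly, I would argue by uniqueness: both $\varepsilon_{F(A)}\circ\sha(\eta_A)$ and $\id_{\sha(A)}$ are $\RBA$-endomorphisms of $F(A)$, and precomposing each with $\eta_A$ gives the same map $A\rar\sha(A)$ (for the former, using naturality of $\eta$ together with the first triangle identity); hence they agree by the uniqueness half of Theorem~\mref{thm:shua}(b). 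Naturality of $\varepsilon$ follows similarly from uniqueness.
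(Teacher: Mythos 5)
Your proposal is correct and follows the same route the paper takes: the paper simply invokes the general principle from~\cite{Ma} that the universal arrows $(F(A),\eta_A)$ supplied by Theorem~\ref{thm:shua}(b) assemble into an adjunction, and you have merely written out the bookkeeping (functoriality of $F$, naturality of $\eta$ and $\varepsilon$, identification of $\varepsilon_{(R,P)}$ with $\widetilde{\id_R}$, and the triangle identities via the uniqueness clause) that the paper leaves implicit. All of these details check out, including the computation $a_0\ot\cdots\ot a_n=a_0\shpr P_R(a_1\ot\cdots\ot a_n)$ used to match the explicit formula for $\varepsilon_{(R,P)}$.
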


\subsection{The monad giving Rota-Baxter algebras}
The above adjunction $\langle F, U, \eta, \varepsilon \rangle:\ALG\rightharpoonup\RBA$ gives rise to a monad $\mathbf{T}=\langle T,\eta,\mu\rangle$ on $\ALG$, where $T$ is the functor
$$T := U F:\ALG \rightarrow \ALG$$
and $\mu$ is the natural transformation
$$\mu:=U\varepsilon F: TT\rightarrow T.$$
Indeed, for any $A \in \ALG$, $T(A)=\sha(A)$ and
$\mu_{A}:\sha(\sha(A))\rightarrow \sha(A)$ is extended additively from
 \begin{eqnarray*}
&&\mu_{A}( (a_{00}\otimes\cdots\otimes a_{0n_{0}})\otimes\cdots\otimes(a_{k0}\otimes\cdots\otimes a_{kn_{k}}))\\
&=&(a_{00}\otimes\cdots\otimes a_{0n_{0}})  P_{A}(\cdots P_{A}(a_{k0}\otimes\cdots\otimes a_{kn_{k}})\cdots),
\end{eqnarray*}
where $(a_{00}\otimes\cdots\otimes a_{0n_{0}})\otimes\cdots\otimes(a_{k0}\otimes\cdots\otimes a_{kn_{k}})\in\sha(\sha(A))$ with
$a_{i0}\otimes\cdots\otimes a_{in_{i}}\in A^{\ot (n_{i}+1)}$ for $n_0,\ldots,n_k\geq 0$ and $0\leq i\leq k$.

By~\cite{Ma}, the monad $\mathbf{T}$ induces a category
of $\mathbf{T}$-algebras, denoted by $\ALG^{\mathbf{T}}$. The objects in $\ALG^{\mathbf{T}}$ are pairs $\langle A,h\rangle$ where $A\in \ALG$ and
$h:\sha(A)\rightarrow A$ is an algebra homomorphism satisfying
the two properties
\begin{equation}
h\circ\eta_A=\id_A,\quad\quad h\circ T(h)=h\circ \mu_{A}.
\mlabel{eq:Tcate}
\end{equation}
A morphism $\phi:\langle R,f\rangle\rightarrow\langle S,g\rangle$ in $\ALG^{\mathbf{T}}$ is an algebra homomorphism $\phi:R\rightarrow S$ such that $g\circ T(\phi)=\phi \circ f$.

The monad $\mathbf{T}$ gives rise to an adjunction
$$\langle F^{\mathbf{T}}, U^{\mathbf{T}},\eta^{\mathbf{T}} , \varepsilon^{\mathbf{T}} \rangle:\ALG
\rightharpoonup \ALG^{\mathbf{T}},$$
where
$$F^{\mathbf{T}} : \ALG \rightarrow \ALG^{\mathbf{T}}$$ is given on objects
by $F^{\mathbf{T}}(A) = \langle \sha(A),\mu_{A}\rangle$ and on homomorphisms
$\varphi:A\rightarrow B$ in $\ALG$ by $F^{\mathbf{T}}(\varphi)=F(\varphi)$.  The functor
$$U^{\mathbf{T}}:\ALG^{\mathbf{T}}\rightarrow \ALG$$
is defined on objects
$\langle A,h\rangle$ by $U^{\mathbf{T}}\langle A,h\rangle = A$, and on morphisms
$\phi:\langle R,f\rangle\rightarrow\langle S,g\rangle$ in $\ALG^{\mathbf{T}}$
by $U^{\mathbf{T}}(\phi)=\phi$.  The natural transformations $\varepsilon^{\mathbf{T}}$ and
$\eta^{\mathbf{T}}$ are defined similarly as $\varepsilon$ and $\eta$, respectively.
Then there is a uniquely defined comparison functor
$K:\RBA \rightarrow \ALG^{\mathbf{T}}$ given by $K(R, P)=\langle R,U(\varepsilon_{(R,P)})\rangle$ for any $(R, P)\in\RBA$ such that $K F=F^{\mathbf{T}}$
and $U^{\mathbf{T}} K=U$.

\begin{theorem}
The comparison functor $K:\RBA \rightarrow \ALG^{\mathbf{T}}$ is
an isomorphism, i.e., $\RBA$ is monadic over $\ALG$.
\mlabel{thm:rbmon}

\end{theorem}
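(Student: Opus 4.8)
The plan is to prove that $K$ is an \emph{isomorphism} of categories by exhibiting an explicit inverse functor $L\colon \ALG^{\mathbf{T}}\to\RBA$ and checking directly that $KL=\id_{\ALG^{\mathbf{T}}}$ and $LK=\id_{\RBA}$; this is stronger than invoking Beck's theorem (which would yield only an equivalence) and it fits the concrete description of $K$ already at hand. On objects, given a $\mathbf{T}$-algebra $\langle A,h\rangle$ with structure map $h\colon\sha(A)\to A$, I would define a candidate Rota-Baxter operator on $A$ by
$$ P\colon A\to A,\qquad P(a)=h(\bfone_A\otimes a)=(h\circ P_A\circ\eta_A)(a), $$
and set $L\langle A,h\rangle=(A,P)$. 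On morphisms, a $\mathbf{T}$-algebra morphism $\phi\colon\langle R,f\rangle\to\langle S,g\rangle$ is by definition an algebra homomorphism with $g\circ T(\phi)=\phi\circ f$, and I would simply set $L(\phi)=\phi$; since $K$ is likewise the identity on underlying maps, the equalities $KL=\id$ and $LK=\id$ on morphisms will be automatic once $L$ is shown to land in $\RBA$.

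The technical heart of the argument is the identity $h\circ P_A=P\circ h$ on $\sha(A)$, i.e. that $h$ is itself a homomorphism of Rota-Baxter algebras $(\sha(A),P_A)\to(A,P)$. I would prove it by feeding the element $\bfone_{\sha(A)}\otimes x=P_{\sha(A)}(\eta_{\sha(A)}(x))\in\sha(\sha(A))$ into the associativity axiom $h\circ T(h)=h\circ\mu_A$. The left-hand side $h(T(h)(\bfone_{\sha(A)}\otimes x))$ evaluates to $h(\bfone_A\otimes h(x))=P(h(x))$, using $h(\bfone_{\sha(A)})=h(\bfone_A)=\bfone_A$ from $h\circ\eta_A=\id_A$; the right-hand side $h(\mu_A(\bfone_{\sha(A)}\otimes x))$ evaluates to $h(P_A(x))$, using the monad unit law $\mu_A\circ\eta_{\sha(A)}=\id$ together with the fact that $\mu_A=U(\varepsilon_{F(A)})$ is a Rota-Baxter homomorphism, i.e. $\mu_A\circ P_{\sha(A)}=P_A\circ\mu_A$. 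Granting this identity, the two remaining object-level facts follow cleanly. That $P$ is a genuine Rota-Baxter operator is obtained by applying the multiplicative map $h$ to the Rota-Baxter relation for $P_A$ evaluated on $\eta_A(a),\eta_A(b)$ and invoking $h\circ P_A=P\circ h$ together with $h$ fixing $A$. That the above $\phi$ is automatically a Rota-Baxter homomorphism follows from $g\circ T(\phi)=\phi\circ f$ and $\phi(\bfone_R)=\bfone_S$, since $\phi(f(\bfone_R\otimes r))=g(\bfone_S\otimes\phi(r))$.

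Finally I would verify the two composites. For $LK=\id_{\RBA}$, starting from $(R,P)$ we have $K(R,P)=\langle R,U(\varepsilon_{(R,P)})\rangle$, and the induced operator reads $a\mapsto U(\varepsilon_{(R,P)})(\bfone_R\otimes a)=\bfone_R\,P(a)=P(a)$, so $LK(R,P)=(R,P)$. For $KL=\id_{\ALG^{\mathbf{T}}}$, I must show that the structure map $h$ is recovered by the formula defining $U(\varepsilon_{(A,P)})$, namely
$$ h(a_0\otimes a_1\otimes\cdots\otimes a_n)=a_0\,P\big(a_1\,P(\cdots P(a_n)\cdots)\big). $$
The key input is the elementary identity in $\sha(A)$
$$ a_0\otimes a_1\otimes\cdots\otimes a_n=a_0\shpr P_A\big(a_1\shpr P_A(\cdots\shpr P_A(a_n)\cdots)\big), $$
which follows by induction from the case $m=0$ of Eq.~(\mref{eq:mshprod1}); applying $h$, and using that $h$ is multiplicative for $\shpr$, that $h$ fixes $A$, and the identity $h\circ P_A=P\circ h$, an induction on $n$ produces the nested expression above. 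I expect the main obstacle to be precisely the structural identity $h\circ P_A=P\circ h$: every other step is either a direct computation or a routine induction, but this identity is what forces an abstract $\mathbf{T}$-algebra structure to coincide with the one coming from an honest Rota-Baxter operator, and it is where the monad associativity axiom and the Rota-Baxter compatibility of $\mu_A$ must be combined just right.
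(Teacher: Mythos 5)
Your proof is correct, but it takes a genuinely different route from the paper. The paper invokes Beck's monadicity theorem: it shows that $U:\RBA\to\ALG$ creates coequalizers of $U$-split pairs by transporting the Rota-Baxter operator along the splitting (setting $\overline{P}=e\circ P'\circ s$) and then verifying the Rota-Baxter identity, that $e$ is a morphism of $\RBA$, and the universal property by hand. You instead construct an explicit inverse $L$ to the comparison functor, recovering the operator from a $\mathbf{T}$-structure via $P=h\circ P_A\circ\eta_A$, with everything hinging on the identity $h\circ P_A=P\circ h$ obtained by evaluating the associativity axiom at $\bfone_{\sha(A)}\otimes x$. The steps you outline do all check out: $\mu_A(\bfone_{\sha(A)}\otimes x)=\bfone_{\sha(A)}\shpr P_A(x)=P_A(x)$ by the explicit formula for $\mu_A$, while $T(h)(\bfone_{\sha(A)}\otimes x)=\bfone_A\otimes h(x)$; the Rota-Baxter identity for $P$ follows by pushing the identity for $P_A$ on elements of $A^{\ot 1}$ through the algebra homomorphism $h$; and the reconstruction $h=U(\varepsilon_{(A,P)})$ follows from $a_0\ot\cdots\ot a_n=a_0\shpr P_A\bigl(a_1\shpr P_A(\cdots P_A(a_n)\cdots)\bigr)$. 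As for what each approach buys: the paper's argument is a template that dualizes essentially verbatim to the comonadic statements (Theorems~\ref{thm:comonadic} and~\ref{thm:Rcomonadic}) and adapts to other categories of operated algebras, whereas yours is more concrete and in effect already establishes the corollary that follows the theorem (the bijection between Rota-Baxter operators and $\mathbf{T}$-structures), since the mutually inverse assignments $P\mapsto U(\varepsilon_{(A,P)})$ and $h\mapsto h\circ P_A\circ\eta_A$ are exactly that bijection made explicit. One small correction: the version of Beck's theorem the paper cites (creation of coequalizers of $U$-split pairs) does conclude that $K$ is an isomorphism of categories, not merely an equivalence, so your stated reason for preferring an explicit inverse is not quite accurate, even though the choice itself is perfectly legitimate.
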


\begin{proof}
We will use Beck's Theorem~\cite[Theorem 1, p.147]{Ma}
to show that $K$ is an isomorphism. Thus we only need to show that the functor
$U$ creates coequalizers for those parallel pairs $f, g:(R,P) \rightarrow (R',P')$ in $\RBA$
for which the pair $U(f), U(g):R\rightarrow R'$ has a split coequalizer in $\ALG$.
Because the pair $U(f), U(g)$ has a split coequalizer in $\ALG$, there are algebra homomorphisms
$e:R'\rightarrow \overline{R}$, $t:R'\rightarrow R$ and $s:\overline{R}\rightarrow R'$
such that $e$ is a coequalizer of the pair $U(f), U(g)$,
$e \circ s = \id_{\overline{R}}$, $U(f) \circ t =
\id_{R'}$, and $s \circ e = U(g) \circ t$.

Define a linear operator on
 $\overline{R}$ by $\overline{P}=e\circ P'\circ s$. For any $a,b\in \overline{R}$, we have
\begin{eqnarray*}
\overline{P}(a)\overline{P}(b)&=&(e\circ P'\circ s)(a)(e\circ P'\circ s)(b)\\
&=&e(P'(s(a))P'(s(b)))\\
&=&e(P'(P'(s(a))s(b)+s(a)P'(s(b)))+\lambda P'(s(a)s(b)))\\
&=&(e\circ P')(P'(s(a))s(b)+s(a)P'(s(b)))+(e\circ P'\circ s)(\lambda ab)\\
&=&(e\circ P'\circ (U(f) \circ t))(P'(s(a))s(b)+s(a)P'(s(b)))+\lambda \overline{P}(ab)\\
&=&(e\circ (U(f)\circ P) \circ t)(P'(s(a))s(b)+s(a)P'(s(b)))+\lambda \overline{P}(ab)\\
&=&((e\circ U(g))\circ P \circ t)(P'(s(a))s(b)+s(a)P'(s(b)))+\lambda \overline{P}(ab)\\
&=&(e\circ (P'\circ U(g)) \circ t)(P'(s(a))s(b)+s(a)P'(s(b)))+\lambda \overline{P}(ab)\\
&=&(e\circ P'\circ (s \circ e))(P'(s(a))s(b)+s(a)P'(s(b)))+\lambda \overline{P}(ab)\\
&=&(e\circ P'\circ s)((e\circ P'\circ s)(a)(e\circ s)(b)+(e\circ s)(a)(e\circ P'\circ s)(b))+\lambda \overline{P}(ab)\\
&=&\overline{P}(\overline{P}(a)b+a\overline{P}(b)+\lambda ab).
\end{eqnarray*}
Therefore $\overline{P}$ is a Rota-Baxter operator of weight $\lambda$ on $\overline{R}$.
Further,
\begin{eqnarray*}
\overline{P}\circ e
&=&(e\circ P'\circ s)\circ e
=(e\circ P')\circ (s\circ e)
=(e\circ P')\circ (U(g) \circ t)
=e\circ (U(g)\circ P) \circ t\\
&=&(e\circ U(f))\circ P \circ t
=e\circ (P'\circ U(f)) \circ t
=e\circ P'\circ (U(f) \circ t)
=e\circ P'.
\end{eqnarray*}
Hence $e$ is a Rota-Baxter algebra homomorphism.

It remains to show that
$e:(R',P') \rightarrow ( \overline{R},\overline{P})$ is a coequalizer
of the pair $f, g$ in $\RBA$. Suppose that  $h:(R',P') \rightarrow (R'',P'')$ is a Rota-Baxter algebra homomorphism such that $h\circ f=h\circ g.$ Hence $U(h)\circ U(f)=U(h)\circ U(g)$ holds. Then there is a unique morphism $h':\overline{R}\rightarrow R''$ in $\ALG$
such that $U(h)=h'\circ e$ holds in $\ALG$. Now
\begin{eqnarray*}
h'\circ \overline{P}
&=&h'\circ (e\circ P'\circ s)
=U(h)\circ (P'\circ s)
=U(h)\circ P'\circ(U(f)\circ t)\circ s\\
&=&U(h)\circ( U(f)\circ P )\circ t\circ s
=(U(h)\circ U(g) )\circ P \circ t\circ s\\
&=&U(h)\circ (P' \circ U(g)) \circ t\circ s
=(P''\circ U(h)) \circ U(g) \circ t\circ s \\
&=&P''\circ U(h) \circ (s \circ e)\circ s
=P''\circ U(h) \circ s
=P''\circ (h'\circ e) \circ s
=P''\circ h'.
\end{eqnarray*}
Therefore $h':(\overline{R},\overline{P})\rightarrow (R'',P'')$  is a Rota-Baxter algebra homomorphism and so $e:(R',P') \rightarrow (\overline{R},\overline{P})$ is a coequalizer
of the pair $f, g$ in $\RBA$, as desired.
\end{proof}

For any $\bfk$-module $M$, let $\mathbf{0}_{M}:M\rightarrow \bfk$ denote the zero map given by $\mathbf{0}_{M}(m)=0$ for any $m\in M$.

\begin{coro}
For any algebra $A$, there is a one-to-one correspondence
between
\begin{enumerate}
\item Rota-Baxter operators $P$ on $A$;
\mlabel{it:rb'1}
\item $\mathbf{T}$-structures on $A$, i.e., algebra homomorphisms $h:\sha(A)\rightarrow A$ satisfying both
$h\circ\eta_{A}=\id_{A}$ and $h\circ T(h)=h\circ \mu_{A}$;
\mlabel{it:rb'2}
\item
linear maps $h_n:\sha(A) \rightarrow A, n\in\NN_+,$ satisfying the following conditions.
\begin{enumerate}
\item $h_n|_{A^{\otimes i}}=\mathbf{0}_{A^{\otimes i}}$ for $i\neq n$.
\mlabel{it:3'1}
\item
$h_1|_{A}=\id_A$.
\mlabel{it:3'2}
\item
For $n_0, n_{1}, \cdots, n_k\geq 1$ with $k\in\NN$, we have
$h_{k+1}(h_{n_0}\ot h_{n_1}\ot\cdots \ot h_{n_k})=\left(\sum\limits_{i=1}^{\infty} h_i\right) \mu_{k+1}$. Here $\mu_{k+1}$ is the linear map $\sha(\sha(A))\rightarrow A$ given by
\begin{align*}
\begin{cases}
\mu_{k+1}|_{{\ssha(A)}^{\ot i}}=\mu_{A}|_{{\ssha(A)}^{\ot (k+1)}}  \quad\quad&\text{for}~~i=k+1,\\
\mu_{k+1}|_{{\ssha(A)}^{\ot i}}=\mathbf{0}_{{\ssha(A)}^{\ot i}}  \quad\quad&\text{for}~~i\neq k+1,
\end{cases}
\end{align*}
and
$$(h_{n_0}\ot \cdots \ot h_{n_k})|_{{\ssha(A)}^{\ot i}}=\mathbf{0}_{{\ssha(A)}^{\ot i}}
\quad for \quad i\neq k+1.$$
\mlabel{it:3'3}
\item
$\left(\sum\limits_{i=1}^{\infty}h_i\right)(\mathfrak{a} \mathfrak{b})=h_{m+1}(\mathfrak{a})h_{n+1}(\mathfrak{b})$
for any $\mathfrak{a}\in A^{\otimes {(m+1)}}, \mathfrak{b}\in A^{\ot (n+1)}$.
\mlabel{it:3'4}
\end{enumerate}
\mlabel{it:rb'3}
\end{enumerate}
\end{coro}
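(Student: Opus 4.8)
The plan is to prove the two equivalences $(\ref{it:rb'1})\Leftrightarrow(\ref{it:rb'2})$ and $(\ref{it:rb'2})\Leftrightarrow(\ref{it:rb'3})$ separately. The first is essentially a restatement of the monadicity in Theorem~\ref{thm:rbmon}. Since the comparison functor $K\colon\RBA\to\ALG^{\mathbf{T}}$ is an isomorphism of categories and satisfies $U^{\mathbf{T}}K=U$, it restricts, over a fixed algebra $A$, to a bijection between the objects of $\RBA$ with underlying algebra $A$ (that is, the Rota-Baxter operators $P$ on $A$) and the objects of $\ALG^{\mathbf{T}}$ with underlying algebra $A$ (that is, the $\mathbf{T}$-structures $h$ on $A$). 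I would record this bijection explicitly: in one direction $P\mapsto h:=U(\varepsilon_{(A,P)})$, so that $h(a_0\otimes\cdots\otimes a_n)=a_0P(a_1P(\cdots P(a_n)\cdots))$; in the other direction $h\mapsto P$ with $P(a)=h(\bfone_A\otimes a)=h(P_A(a))$. That these are mutually inverse is immediate from the formula for $\varepsilon$, since $\varepsilon_{(A,P)}(\bfone_A\otimes a)=\bfone_AP(a)=P(a)$.

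For $(\ref{it:rb'2})\Leftrightarrow(\ref{it:rb'3})$ the key device is the grading $\sha(A)=\bigoplus_{i\geq 1}A^{\otimes i}$. Given a linear map $h\colon\sha(A)\to A$ I would define $h_n$ to agree with $h$ on $A^{\otimes n}$ and to vanish on every $A^{\otimes i}$ with $i\neq n$; this is exactly the support condition $h_n|_{A^{\otimes i}}=\mathbf{0}_{A^{\otimes i}}$ for $i\neq n$, and conversely any family with this property reassembles into a single linear map $h=\sum_{i\geq 1}h_i$, the sum being pointwise finite because each element of $\sha(A)$ lies in $\bigoplus_{i=1}^{N}A^{\otimes i}$ for some $N$. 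Thus the support condition sets up a bijection between linear maps $\sha(A)\to A$ and such families, and what remains is to translate the three defining properties of a $\mathbf{T}$-structure. Since $\eta_A=j_A$ is the inclusion of $A=A^{\otimes 1}$, the unit axiom $h\circ\eta_A=\id_A$ reads $h_1|_A=\id_A$, which is the condition $h_1|_A=\id_A$ (and this also forces $h(\bfone_A)=\bfone_A$, so unitality of $h$ is automatic). For homogeneous $\mathfrak{a}\in A^{\otimes(m+1)}$ and $\mathfrak{b}\in A^{\otimes(n+1)}$ one has $h(\mathfrak{a})=h_{m+1}(\mathfrak{a})$ and $h(\mathfrak{b})=h_{n+1}(\mathfrak{b})$, so the multiplicativity of $h$ is precisely the identity $\bigl(\sum_i h_i\bigr)(\mathfrak{a}\mathfrak{b})=h_{m+1}(\mathfrak{a})h_{n+1}(\mathfrak{b})$; by bilinearity this is equivalent to $h$ being an algebra homomorphism.

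The remaining and most bookkeeping-heavy step is to convert the associativity axiom $h\circ T(h)=h\circ\mu_A$ into the compatibility condition. Here I would use the induced grading $\sha(\sha(A))=\bigoplus_{k\geq 0}\sha(A)^{\otimes(k+1)}=\bigoplus_{k\geq 0}\bigoplus_{n_0,\ldots,n_k\geq 1}A^{\otimes n_0}\otimes\cdots\otimes A^{\otimes n_k}$, so that equality of two linear maps out of $\sha(\sha(A))$ is equivalent to equality on each homogeneous summand $A^{\otimes n_0}\otimes\cdots\otimes A^{\otimes n_k}$. On such a summand, $T(h)=\sha(h)$ sends $\xi_0\otimes\cdots\otimes\xi_k$ to $h_{n_0}(\xi_0)\otimes\cdots\otimes h_{n_k}(\xi_k)\in A^{\otimes(k+1)}$, whence $h\circ T(h)$ restricts to $h_{k+1}\circ(h_{n_0}\otimes\cdots\otimes h_{n_k})$; and $h\circ\mu_A$ restricts to $\bigl(\sum_i h_i\bigr)\circ\mu_{k+1}$, matching the definition of $\mu_{k+1}$ as $\mu_A$ on $\sha(A)^{\otimes(k+1)}$ and zero elsewhere. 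Running over all $k\geq 0$ and $n_0,\ldots,n_k\geq 1$ then produces exactly the stated compatibility identity. The main obstacle I anticipate is not conceptual but notational: keeping the two gradings and the index shift between ``length $n$'' and the tensor power $A^{\otimes n}$ straight, and checking that every homogeneous piece of $\sha(\sha(A))$ corresponds to a tuple $(n_0,\ldots,n_k)$ occurring in the condition, so that the family of equations is genuinely equivalent to the single operator identity $h\circ T(h)=h\circ\mu_A$, with no summand omitted or double-counted.
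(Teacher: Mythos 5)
Your proposal is correct and follows essentially the same route as the paper: the equivalence of (\ref{it:rb'1}) and (\ref{it:rb'2}) is read off from the monadicity isomorphism $K$ of Theorem~\ref{thm:rbmon} (with $U^{\mathbf{T}}K=U$ restricting it to a bijection on the fiber over $A$), and the equivalence of (\ref{it:rb'2}) and (\ref{it:rb'3}) is obtained by decomposing $h$ along the grading $\sha(A)=\bigoplus_{i\geq 1}A^{\otimes i}$ and matching the unit, multiplicativity, and associativity axioms componentwise against conditions (\ref{it:3'2}), (\ref{it:3'4}), and (\ref{it:3'3}). The explicit mutually inverse formulas $P\mapsto U(\varepsilon_{(A,P)})$ and $h\mapsto h\circ P_A\circ\eta_A$ that you record are a harmless addition not spelled out in the paper.
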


\begin{proof}
The equivalence of the first two conditions follows directly from Theorem~\ref{thm:rbmon}.

Next we give the equivalence of Item~(\mref{it:rb'2}) and Item~(\mref{it:rb'3}).

For a given $h:\sha(A)\to A$ satisfying the conditions in Item~(\mref{it:rb'2}), define linear maps $h_n:\sha(A)\to A$ by $h_n|_{A^{\ot n}}=h|_{A^{\ot n}}$ and $h_n|_{A^{\ot i}}=\mathbf{0}|_{A^{\ot i}}$ with $i\neq n$. Then we have $h_1|_A=h\circ \eta_{A}=\id_A$.
That is, Item~(\mref{it:3'1}) and Item~(\mref{it:3'2}) hold.
Note that $T(h)=\sum\limits_{k=1}^\infty h^{\ot k}$.
Next let $k\geq 0$ and $n_0, n_1,\cdots, n_k\geq 1$ be given. Let $s\geq 0$ and let $\mathfrak{w}=\mathfrak{w}_{0}\ot \mathfrak{w}_{1}\ot \cdots \ot \mathfrak{w}_{s}\in\sha(A)^{\ot (s+1)}$ with
$\mathfrak{w}_{j}\in A^{\ot m_j}$ where $m_j\geq 1$ for $0\leq j\leq s$.
If $s=k$ and $m_j=n_j$, then
$h_{k+1}((h_{n_0}\ot h_{n_1}\ot \cdots \ot h_{n_k})(\mathfrak{w}))=h( T(h)(\mathfrak{w}))$ and
$\left(\sum\limits_{i=1}^{\infty} h_i\right)( \mu_{k+1}(\mathfrak{w}))=h( \mu_{A}(\mathfrak{w}))$.
By the condition $h\circ T(h)=h\circ \mu_{A}$ in Item~(\mref{it:rb'2}), we get $h_{k+1}((h_{n_0}\ot h_{n_1}\ot \cdots \ot h_{n_k})(\mathfrak{w}))=\left(\sum\limits_{i=1}^{\infty} h_i\right)(\mu_{k+1}(\mathfrak{w}))$. Otherwise, $h_{k+1}((h_{n_0}\ot h_{n_1}\ot \cdots \ot h_{n_k})(\mathfrak{w}))=\left(\sum\limits_{i=1}^{\infty} h_i\right)( \mu_{k+1}(\mathfrak{w}))=0$.
Since $h_{n}$ and $\mu_{k+1}$ are linear maps, we obtain that Item~(\mref{it:3'3}) holds.
As $h$ is an algebra homomorphism, for any $\mathfrak{a}\in A^{\ot (m+1)}$, $\mathfrak{b}\in A^{\ot (n+1)}$, we have $h(\mathfrak{a} \mathfrak{b})=h(\mathfrak{a})h(\mathfrak{b})$.
For
$\Big(\sum\limits_{k=1}^{\infty}h_k\Big)(\mathfrak{a} \mathfrak{b})=h(\mathfrak{a} \mathfrak{b})$ and
$h_{m+1}(\mathfrak{a})h_{n+1}(\mathfrak{b})=h(\mathfrak{a})h(\mathfrak{b})$,
we get $\Big(\sum\limits_{i=1}^{\infty}h_i\Big)(\mathfrak{a} \mathfrak{b})=h_{m+1}(\mathfrak{a})h_{n+1}(\mathfrak{b})$.
That is, Item~(\mref{it:3'4}) holds.

Conversely, for given $h_n, n\geq 0$, satisfying the conditions in Item~(\mref{it:rb'3}), define $h:=\sum\limits_{n=1}^\infty h_n$. Then $h$ is a linear map on $\sha(A)=\bigoplus\limits_{k=1}^\infty A^{\ot k}$. By the linearity of $h$, Item~(\mref{it:3'4}) shows that $h$  is an algebra homomorphism. From $h_1=\id_A$, we obtain $h\circ \eta_{A}=h_1\circ \eta_{A}=\id_{A}$. Next from $h_{k+1}(h_{n_0}\ot \cdots \ot h_{n_k})=\left(\sum\limits_{i=1}^{\infty} h_i\right) \mu_{k+1}$ and the definition of $h$,
we obtain $h\circ T(h)=h\circ \mu_{A}$ in Item~(\mref{it:rb'2}) for $h$.
\end{proof}

\section{Differential algebras and comonads}
\mlabel{sec:dif}

In this section, we review background on differential algebras. We also study the comonad giving differential algebras.

Recall that a derivation $d$ on an algebra $A$ is a linear map $d:A\rightarrow A$
that satisfies Leibnitz's rule: $d(ab)=d(a)b+ad(b)$ for all $a,b\in A$. Recall now the concept of a derivation with a weight which was introduced in~\mcite{GK3} as a generalization of that of a derivation.
\begin{defn}
Let $\bfk$ be a ring, $\lambda\in \bfk$,
and let $R$ be an algebra.
\begin{enumerate}
\item
A {\bf derivation of weight $\lambda$ on $R$ over $\bfk$}
or more briefly, a {\bf $\lambda$-derivation on $R$ over $\bfk$}
is a module
endomorphism $d$ of $R$ satisfying both
\begin{equation}
 d(xy)=d(x)y+xd(y)+\lambda d(x)d(y),\ \text{ for all } x,\ y\in R
\mlabel{eq:der1}
\end{equation}
and
\begin{equation}
d(\bfone_{R})=0.
\mlabel{eq:d(1)=0}
\end{equation}
\item
    A {\bf $\lambda$-differential algebra}
is a pair $(R,d)$ where $R$ is an algebra
and $d$ is a $\lambda$-derivation
on $R$ over $\bfk$.
\item
Let $(R,d)$ and $(S,e)$ be two $\lambda$-differential
algebras.
A {\bf homomorphism of $\lambda$-differential algebras}
$f:(R,d)\rar (S,e)$ is a homomorphism
$f: R \rar S$ of algebras such that
$ f(d(x))=e(f(x))$
for all $x \in R$.
\end{enumerate}
\end{defn}

Note that if $\lambda = 0$, then a $0$-derivation is a derivation
in the usual sense~\cite{Kol}.

\begin{exam}
As an example of a $\lambda$-derivation, let $\RR$ denote
the field of real numbers, and let $\lambda \in \RR$,
$\lambda \neq 0$.  Let $A$ denote the $\RR$-algebra of
$\RR$-valued analytic functions on $\RR$, and consider the usual
"difference quotient" operator $d_\lambda$ on $A$ defined by
$$(d_\lambda(f))(x) = (f(x+\lambda) - f(x))/\lambda.$$
Then a simple calculation shows that $d_\lambda$ is a $\lambda$-derivation on $A$.  Furthermore, $d_\lambda \circ P_\lambda = {\rm id}_A$, where $P_\lambda$ is the Rota-Baxter operator from Example~\ref{ex:int}.
\mlabel{ex:difquo}
\end{exam}

The following generalization of the well-known result
of Leibnitz~\cite[p.60]{Kol} was proved in~\cite{GK3}.
\begin{prop}
Let $(R,d)$ be a $\lambda$-differential algebra, let $x,y \in
R$, and let $n \in \NN$.  Then
\begin{equation}
d^{(n)}(xy) =
\sum_{k=0}^{n}\sum_{j=0}^{n-k}\binc{n}{k}\binc{n-k}{j}
\lambda^{k}d^{(n-j)}(x)d^{(k+j)}(y).
\mlabel{eq:der2}
\end{equation}
\mlabel{prop:Leibnitz}
\end{prop}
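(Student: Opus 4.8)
The plan is to prove~\eqref{eq:der2} by induction on $n$, with the inductive step driven by a single application of the weight-$\lambda$ Leibniz rule~\eqref{eq:der1} and closed off by Pascal's rule applied twice. For the base cases, $n=0$ reduces to $d^{(0)}(xy)=xy$, which is the unique term $(k,j)=(0,0)$ on the right; and $n=1$ is exactly the defining identity~\eqref{eq:der1}, matched against the three terms indexed by $(k,j)\in\{(0,0),(0,1),(1,0)\}$.

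For the inductive step I would assume~\eqref{eq:der2} for a fixed $n$ and apply $d$ to both sides. By linearity it suffices to differentiate each summand $\lambda^{k}\,d^{(n-j)}(x)\,d^{(k+j)}(y)$, and by~\eqref{eq:der1} this expands into three terms,
$$d^{(n-j+1)}(x)d^{(k+j)}(y)+d^{(n-j)}(x)d^{(k+j+1)}(y)+\lambda\,d^{(n-j+1)}(x)d^{(k+j+1)}(y),$$
each carrying the coefficient $\lambda^{k}\binc{n}{k}\binc{n-k}{j}$. I would then collect the total coefficient of a fixed monomial $d^{(p)}(x)d^{(q)}(y)$. Writing the desired right-hand coefficient for $n+1$ as $\lambda^{K}\binc{n+1}{K}\binc{n-K+1}{a}$ with $K=p+q-n-1$ and $a=n+1-p$, one tracks which pairs $(k,j)$ feed into $d^{(p)}(x)d^{(q)}(y)$: the first expansion term comes from $(k,j)=(K,a)$, the second from $(k,j)=(K,a-1)$, and the third (after absorbing its extra factor $\lambda$) from $(k,j)=(K-1,a)$.

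Summing these three contributions, the claim reduces to the purely combinatorial identity
$$\binc{n}{K}\Big(\binc{n-K}{a}+\binc{n-K}{a-1}\Big)+\binc{n}{K-1}\binc{n-K+1}{a}=\binc{n+1}{K}\binc{n-K+1}{a}.$$
This follows by first using Pascal's rule $\binc{n-K}{a}+\binc{n-K}{a-1}=\binc{n-K+1}{a}$ inside the bracket, and then using $\binc{n}{K}+\binc{n}{K-1}=\binc{n+1}{K}$ on the resulting common factor $\binc{n-K+1}{a}$. This completes the induction.

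I expect the main obstacle to be the index bookkeeping rather than any single algebraic identity: one must verify that the summation ranges behave correctly at the boundaries, where several binomial coefficients vanish, so that the reindexed triple sum and the target double sum agree term by term and no spurious contributions survive at the extreme values of $k$ and $j$. Once the ranges are aligned and the reindexing is carried out carefully, the two Pascal identities close everything with no further computation.
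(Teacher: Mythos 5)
Your proof is correct. Note that this paper does not actually contain a proof of Proposition~\ref{prop:Leibnitz}: it is quoted from the reference~\cite{GK3}, where it is established by exactly the induction you describe --- apply the weight-$\lambda$ Leibniz rule \eqref{eq:der1} to each summand, reindex, and close with Pascal's rule in each of the two binomial slots. Your bookkeeping of the three contributions $(K,a)$, $(K,a-1)$, $(K-1,a)$ to the coefficient of $d^{(p)}(x)d^{(q)}(y)$ checks out, and the boundary cases are indeed handled by the vanishing of out-of-range binomial coefficients, so there is nothing to add.
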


We next recall the concept and basic properties of the algebra of $\lambda$-Hurwitz series~\cite{GK3} as a generalization of the ring of Hurwitz series~\cite{Ke}.
For any algebra $A$, let $A^{\NN}$ denote the $\bfk$-module of all functions $f:\NN \rightarrow A$.  On $A^{\NN}$, we define the {\bf $\lambda$-Hurwitz product} $fg$
of any $f, g \in A^{\NN}$ by
\begin{equation}
(fg)(n) = \sum_{k=0}^{n}\sum_{j=0}^{n-k}\binc{n}{k}\binc{n-k}{j}
\lambda^{k}f(n-j)g(k+j).
\mlabel{eq:hurprod}
\end{equation}

We note that the definition of the $\lambda$-Hurwitz product is motivated by Proposition~\ref{prop:Leibnitz}.
In~\cite{GK3} we denoted the algebra $A^{\NN}$ with this product by $D(A)$, but in this paper we will keep the notation $A^{\NN}$, since it indicates the functorial nature of this algebra as functions from $\NN$ to $A$.
As before in~\cite{GK3} we call $A^{\NN}$ the {\bf algebra of $\lambda$-Hurwitz series} over $A$.
Further define a map
$$\partial_A:A^{\NN} \rightarrow A^{\NN},$$
by $\partial_A(f)(n) = f(n+1)$ for any $f \in A^{\NN}$.
Then $\partial_A$ is a $\lambda$-derivation on $A^{\NN}$ and $(A^{\NN},\partial_A)$ is a $\lambda$-differential algebra. Suppose that $h:A \rightarrow B$ is an algebra homomorphism. We define a map $h^{\NN}: A^{\NN} \rightarrow B^{\NN}$ by $(h^{\NN}(f))(n) = h(f(n))$ for any $f\in A^{\NN}$ and $n\in\NN$. It is easy to check that $h^{\NN}$ is a $\lambda$-differential algebra homomorphism from $(A^{\NN},\partial_A)$ to $(B^{\NN},\partial_B)$.

Let $\DIF$ denote the category
of $\lambda$-differential algebras. We see that we have a functor
$G:\ALG \rightarrow \DIF$ given on objects
$A \in \ALG$ by $G(A) = (A^{\NN}, \partial_A)$ and on morphisms
$h:A \rightarrow B$ in $\ALG$ by $G(h) = h^{\NN}$ as defined
above.  Let $V:\DIF \rightarrow \ALG$ denote the forgetful
functor defined on objects $(R,d) \in \DIF$ by $V(R,d) = R$ and
on morphisms $g:(R,d) \rightarrow (S,e)$ in $\DIF$ by $V(g) = g$.

Next, we define two natural
transformations $\eta: \id_{\DIF} \rightarrow G V$ and
$\varepsilon: V G \rightarrow \id_{\ALG}$.
For any $(R,d) \in \DIF$, define
$$\eta_{(R,d)}:(R,d) \rightarrow (G V)(R,d)=(R^{\NN}, \partial_R), \quad (\eta_{(R,d)}(x))(n)\colon= d^{(n)}(x), x\in R, n\in \NN.$$
For any $A\in \ALG$, define
$$\varepsilon_A: (V G)(A) = A^{\NN} \rightarrow A, \quad \varepsilon_A(f)\colon = f(0), f \in A^{\NN}.$$

\begin{prop}$($\cite[Proposition~2.8]{GK3}$)$
The functor $G:\ALG \rightarrow \DIF$ defined above is the right adjoint of the forgetful functor $V:\DIF \rightarrow \ALG$. It follows that $(A^{\NN},\partial_A)$ is a cofree $\lambda$-differential algebra on the algebra $A$.
\mlabel{prop:cofree}
\end{prop}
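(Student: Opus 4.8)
The plan is to establish the adjunction $V \dashv G$ by verifying the cofree universal property directly, which simultaneously proves both assertions. Concretely, I would show that for every $\lambda$-differential algebra $(R,d)$ and every algebra homomorphism $\varphi\colon R = V(R,d) \rightarrow A$, there is a unique $\lambda$-differential algebra homomorphism $\tilde\varphi\colon (R,d)\rightarrow (A^{\NN},\partial_A)$ with $\varepsilon_A\circ V(\tilde\varphi)=\varphi$. The only reasonable candidate is $\tilde\varphi := G(\varphi)\circ \eta_{(R,d)}$, that is, $(\tilde\varphi(x))(n)=\varphi(d^{(n)}(x))$ for $x\in R$ and $n\in\NN$; everything then reduces to checking that this $\tilde\varphi$ is a well-defined morphism of $\DIF$, that it is compatible with $\varepsilon_A$, and that it is forced.

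First I would check that $\tilde\varphi$ is an algebra homomorphism. The unit-preservation is quick: since $d(\bfone_R)=0$ by Eq.~(\ref{eq:d(1)=0}), we have $d^{(n)}(\bfone_R)=0$ for $n\geq 1$ and $d^{(0)}(\bfone_R)=\bfone_R$, so $\tilde\varphi(\bfone_R)$ is the function supported at $0$ with value $\bfone_A$, which is precisely the multiplicative identity of $A^{\NN}$. The substantive point is multiplicativity: expanding $(\tilde\varphi(xy))(n)=\varphi(d^{(n)}(xy))$ by the higher Leibniz rule of Proposition~\ref{prop:Leibnitz} and applying $\varphi$ yields exactly the double sum defining the $\lambda$-Hurwitz product in Eq.~(\ref{eq:hurprod}) evaluated on $\tilde\varphi(x)$ and $\tilde\varphi(y)$. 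I expect this matching of binomial coefficients to be the main obstacle, or at least the one step that is not purely formal; it is no accident, however, since the Hurwitz product was defined so as to encode Proposition~\ref{prop:Leibnitz}, so the verification is a direct comparison of the two formulas rather than a genuine difficulty.

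Next I would verify that $\tilde\varphi$ intertwines the derivations: for each $n$, $(\tilde\varphi(d(x)))(n)=\varphi(d^{(n+1)}(x))=(\tilde\varphi(x))(n+1)=(\partial_A\tilde\varphi(x))(n)$, so $\tilde\varphi\circ d=\partial_A\circ\tilde\varphi$ and $\tilde\varphi$ is a morphism in $\DIF$. Compatibility with the counit is immediate: $\varepsilon_A(\tilde\varphi(x))=(\tilde\varphi(x))(0)=\varphi(d^{(0)}(x))=\varphi(x)$. For uniqueness, suppose $\psi\colon (R,d)\rightarrow (A^{\NN},\partial_A)$ is any morphism of $\DIF$ with $\varepsilon_A\circ\psi=\varphi$. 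Using that $\psi$ commutes with the derivations and that $f(n)=(\partial_A^{(n)}f)(0)=\varepsilon_A(\partial_A^{(n)}f)$ for every $f\in A^{\NN}$, I get $(\psi(x))(n)=\varepsilon_A(\partial_A^{(n)}\psi(x))=\varepsilon_A(\psi(d^{(n)}(x)))=\varphi(d^{(n)}(x))=(\tilde\varphi(x))(n)$, whence $\psi=\tilde\varphi$.

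Finally, I would note that this universal property is exactly the statement that $V\dashv G$ with the given unit $\eta$ and counit $\varepsilon$, so $G$ is right adjoint to $V$ and $(A^{\NN},\partial_A)$ is cofree on $A$. Equivalently, one may package the same computations as the two triangle identities: $\varepsilon V\circ V\eta=\id_V$ reduces to $d^{(0)}=\id_R$, and $G\varepsilon\circ \eta G=\id_G$ reduces to the identity $(\partial_A^{(n)}f)(0)=f(n)$ together with naturality of $\varepsilon$. Either route requires the same single nontrivial ingredient, namely the compatibility of the iterated Leibniz rule with the Hurwitz product established in the multiplicativity step above.
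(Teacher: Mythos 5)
Your proof is correct and complete: the candidate $\tilde\varphi(x)(n)=\varphi(d^{(n)}(x))$ is the right one, the multiplicativity check is exactly the matching of Proposition~\ref{prop:Leibnitz} with the $\lambda$-Hurwitz product in Eq.~(\ref{eq:hurprod}), and the uniqueness argument via $f(n)=\varepsilon_A(\partial_A^{(n)}f)$ is airtight. Note that the paper itself supplies no proof here --- it simply quotes \cite[Proposition~2.8]{GK3} --- so your direct verification of the cofree universal property is precisely the argument being deferred to that reference, and nothing is missing.
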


Proposition~\ref{prop:cofree}
gives an adjunction $\langle V,G,\eta,\varepsilon\rangle:\DIF\rightharpoonup\ALG$.
Corresponding to the adjunction,
there is a comonad $\C = \langle C,\varepsilon,\delta \rangle$ on the category
$\ALG$, where $C$ is the functor
$$C := VG:\ALG \rightarrow \ALG$$
whose value for any $A \in \ALG$ is $C(A) = A^{\NN}$ and $\delta$ is the natural transformation from $C$ to $C C$ defined by $\delta := V\eta G$. In other words, for any $A \in \ALG$,
$$\delta_A:A^{\NN} \rightarrow (A^{\NN})^{\NN}, \quad (\delta_A(f)(m))(n) = f(m+n), \quad f \in A^{\NN}, m, n \in \NN.$$ Note that as a $\bfk$-module, $(A^{\NN})^{\NN}\cong A^{\NN\times \NN}$, the set of sequences of sequences, or equivalently, doubly-indexed sequences with values in $A$.

The comonad $\C$ induces a category
of $\C$-coalgebras, denoted by $\ALG_{\C}$.  The objects in
$\ALG_{\C}$ are pairs $\langle A,f\rangle$ where $A \in \ALG$ and
$f:A \rightarrow A^{\NN}$ is a homomorphism in $\ALG$ satisfying the two properties
$$\varepsilon_A \circ f= \id_A, \quad \delta_A \circ f= f^{\NN} \circ f.$$
A morphism $\varphi:\langle A,f\rangle\rightarrow\langle B,g\rangle$ in $\ALG_{\C}$ is an algebra homomorphism $\varphi: A \rightarrow B$ such that $g \circ \varphi = \varphi^{\NN} \circ f$.

The comonad $\C$ also gives rise to an adjunction
$$\langle V_{\C}, G_{\C}, \eta_{\C}, \varepsilon_{\C} \rangle: \ALG_{\C}
\rightharpoonup \ALG,$$
where
$$V_{\C} : \ALG_{\C} \rightarrow \ALG$$ is given on objects
by $V_{\C}\langle R,f\rangle = R$ and on morphisms
$\varphi:\langle A,f\rangle\rightarrow\langle B,g\rangle$ in $\ALG_{\C}$ by $V_{\C}(\varphi)=\varphi$.  The functor
$$G_{\C}:\ALG \rightarrow \ALG_{\C}$$ is defined on objects
$A \in \ALG$ by $G_{\C}(A) = \langle A^{\NN}, \delta_A\rangle$, and on morphisms
$\phi:A\rightarrow B$ in $\ALG$ by $G_{\C}(\phi)= \phi^{\NN}:A^{\NN}\to B^{\NN}$.  The natural transformations $\varepsilon_{\C}$ and
$\eta_{\C}$ are defined similarly to $\varepsilon$ and $\eta$.

Consequently there is a uniquely defined cocomparison functor
$H:\DIF \rightarrow \ALG_{\C}$ such that $H  G = G_{\C}$ and $V_{\C} H = V$.  Here $H(R,d) = \langle R,\tilde{d}\rangle$,
where for the $\lambda$-derivation
$d:R \rightarrow R$, the algebra homomorphism $\tilde{d}:R \rightarrow R^{\NN}$ is defined by
$\tilde{d} = V(\eta_{(R,d)})$, called the $\lambda$-Hurwitz homomorphism of $d$.
Hence, for any
$a \in R$ and $n \in \NN$, $(\tilde{d}(a))(n) = d^{(n)}(a)$.

\begin{theorem}
The cocomparison functor $H:\DIF \rightarrow \ALG_{\C}$ is
an isomorphism, i.e., $\DIF$ is comonadic over $\ALG$.
\mlabel{thm:comonadic}
\end{theorem}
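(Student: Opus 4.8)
The plan is to invoke the dual of Beck's Theorem~\cite[Theorem 1, p.147]{Ma}, the precise comonadicity theorem, exactly mirroring the proof of Theorem~\mref{thm:rbmon}. Since $G$ is already the right adjoint of $V$ by Proposition~\mref{prop:cofree}, it remains to show that $V$ creates equalizers for those parallel pairs $f, g:(R,d) \rightarrow (R',d')$ in $\DIF$ for which the pair $V(f), V(g):R\rightarrow R'$ admits a split equalizer in $\ALG$. Dualizing the split coequalizer data in the proof of Theorem~\mref{thm:rbmon}, such a split equalizer provides algebra homomorphisms $m:\overline{R}\rightarrow R$, $s:R\rightarrow \overline{R}$ and $t:R'\rightarrow R$ with $m$ an equalizer of $V(f), V(g)$, together with the identities $s\circ m=\id_{\overline{R}}$, $t\circ V(f)=\id_R$ and $m\circ s=t\circ V(g)$.

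The construction dual to $\overline{P}=e\circ P'\circ s$ is to equip $\overline{R}$ with the operator $\overline{d}:=s\circ d\circ m$. First I would check that $\overline{d}$ is a $\lambda$-derivation. Because $m$ and $s$ are algebra homomorphisms and $s\circ m=\id_{\overline{R}}$, for $a, b\in\overline{R}$ one computes $\overline{d}(ab)=s(d(m(a)m(b)))$, and expanding by the Leibnitz rule~(\mref{eq:der1}) for $d$ and distributing $s$ over the three resulting terms yields $\overline{d}(a)b+a\overline{d}(b)+\lambda\overline{d}(a)\overline{d}(b)$, where $s\circ m=\id_{\overline{R}}$ collapses $s(m(a))$ and $s(m(b))$ to $a$ and $b$ in the first two terms; likewise $\overline{d}(\bfone_{\overline{R}})=s(d(\bfone_R))=0$. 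In contrast to the Rota-Baxter case, this verification is immediate, since the Leibnitz identity passes cleanly through the algebra homomorphism $s$.

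The substantive step, dual to showing that $e$ is a Rota-Baxter homomorphism, is to verify that $m$ is a $\lambda$-differential homomorphism, i.e. $m\circ\overline{d}=d\circ m$. Here all of the split equalizer relations and both homomorphism hypotheses on $f, g$ enter: starting from $m\circ\overline{d}=m\circ s\circ d\circ m=t\circ V(g)\circ d\circ m$, one pushes $d$ through $g$ via $V(g)\circ d=d'\circ V(g)$, uses the equalizer identity $V(g)\circ m=V(f)\circ m$, pushes $d'$ back through $f$ via $d'\circ V(f)=V(f)\circ d$, and finally applies $t\circ V(f)=\id_R$ to reach $d\circ m$. This chain is precisely the dual of the one establishing $\overline{P}\circ e=e\circ P'$ in Theorem~\mref{thm:rbmon}, and I expect it to be the main obstacle, as it is the only point requiring the full package of relations.

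Finally I would confirm the universal property and the uniqueness of the lift. Given any $\lambda$-differential homomorphism $h:(R'',d'')\rightarrow(R,d)$ with $f\circ h=g\circ h$, the faithfulness of $V$ together with the equalizer property of $m$ in $\ALG$ produces a unique algebra homomorphism $h'=s\circ V(h):R''\rightarrow\overline{R}$ with $m\circ h'=V(h)$; the short computation $\overline{d}\circ h'=s\circ d\circ V(h)=s\circ V(h)\circ d''=h'\circ d''$ shows that $h'$ is a differential homomorphism, so $m$ is an equalizer of $f, g$ in $\DIF$. Since $m$ is a split monomorphism, the relation $m\circ\overline{d}=d\circ m$ forces $\overline{d}=s\circ d\circ m$, so the lifted $\lambda$-differential structure is unique; hence $V$ creates these equalizers, and the comonadicity theorem gives that $H$ is an isomorphism, as desired.
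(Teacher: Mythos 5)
Your proposal is correct and is exactly the argument the paper has in mind: the paper's proof of Theorem~\ref{thm:comonadic} is simply the remark that it is ``virtually the dual'' of the proof of Theorem~\ref{thm:rbmon} via the dual of Beck's Theorem, and your write-up carries out that dualization faithfully (split equalizer data, $\overline{d}=s\circ d\circ m$, the chain through $V(g)\circ d=d'\circ V(g)$, $V(g)\circ m=V(f)\circ m$, $d'\circ V(f)=V(f)\circ d$ and $t\circ V(f)=\id_R$, plus the uniqueness of the lifted derivation). No gaps.
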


\begin{proof}
The proof, which uses the dual of Beck's Theorem~\cite{Ma} to show that $H$ is an isomorphism, is virtually the dual of the proof of Theorem~\ref{thm:rbmon} and is omitted.
\end{proof}

\begin{coro}
For any algebra $A$, there is a one-to-one correspondence
between
\begin{enumerate}
\item
$\lambda$-derivations $d$ on $A$ over $\bfk$;
\mlabel{der}
\item
$\C$-costructures $f$ on $A$, i.e., algebra homomorphisms
$f :A \rightarrow A^{\NN}$ satisfying $\varepsilon_A \circ f= \id_A$
and $\delta_A \circ f= f^{\NN} \circ f$;
\mlabel{costr}
\item
sequences of $\bfk$-module homomorphisms $(f_n):A \rightarrow A$ for $n\in \NN$ that
satisfy $f_0 = \id_A$, $f_m \circ f_n = f_{m+n}$ and
$f_n(ab) = \sum\limits_{k=0}^{n}\sum\limits_{j=0}^{n-k}\binc{n}{k}\binc{n-k}{j}
\lambda^{k}f_{n-j}(a)f_{k+j}(b)$ for all $a,b \in A$.
\mlabel{seq}
\end{enumerate}
\mlabel{coro:1-1}
\end{coro}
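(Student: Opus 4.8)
The plan is to establish the three-way correspondence by proving the equivalence of Item~(\ref{der}) with Item~(\ref{costr}), and then the equivalence of Item~(\ref{costr}) with Item~(\ref{seq}). The first of these is already in hand: by Theorem~\ref{thm:comonadic}, the cocomparison functor $H:\DIF \rightarrow \ALG_{\C}$ is an isomorphism, and since objects of $\DIF$ over a fixed algebra $A$ are exactly the $\lambda$-derivations $d$ on $A$, while objects of $\ALG_{\C}$ over $A$ are exactly the $\C$-costructures $f$ on $A$, the isomorphism $H$ restricts to a bijection between these two sets. Concretely, $H$ sends $d$ to the $\lambda$-Hurwitz homomorphism $\tilde{d}$ with $(\tilde{d}(a))(n) = d^{(n)}(a)$, so this part of the proof reduces to citing Theorem~\ref{thm:comonadic}.

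The substantive work is the equivalence of Item~(\ref{costr}) and Item~(\ref{seq}), which unwinds the categorical conditions on $f:A \rightarrow A^{\NN}$ into componentwise data. The plan is to define, for a given $\C$-costructure $f$, the sequence of maps $f_n:A \rightarrow A$ by $f_n(a) := (f(a))(n)$, and conversely, given a sequence $(f_n)$ satisfying the stated conditions, to define $f$ by $(f(a))(n) := f_n(a)$. I would then translate each of the two costructure axioms into the corresponding conditions on the $f_n$. The counit condition $\varepsilon_A \circ f = \id_A$, using $\varepsilon_A(g) = g(0)$, says precisely that $f_0 = \id_A$. The comultiplication condition $\delta_A \circ f = f^{\NN} \circ f$ should be evaluated at an element $a \in A$ and a pair of indices $(m,n)$: the left side gives $(\delta_A(f(a))(m))(n) = (f(a))(m+n) = f_{m+n}(a)$ by the definition of $\delta_A$, while the right side gives $((f^{\NN}(f(a)))(m))(n)$, which unwinds through $f^{\NN}(g)(m) = f(g(m))$ to $(f(f_m(a)))(n) = f_n(f_m(a))$. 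Equating these yields exactly $f_{m+n} = f_n \circ f_m$, matching the stated relation $f_m \circ f_n = f_{m+n}$ by commutativity of addition in $\NN$.

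It remains to account for the multiplicative condition: that $f$ is an algebra homomorphism, i.e.\ $f(ab) = f(a)f(b)$ in $A^{\NN}$. The plan is to evaluate this at index $n$ and apply the definition of the $\lambda$-Hurwitz product from Eq.~(\ref{eq:hurprod}). Writing out $(f(a)f(b))(n)$ via that formula and substituting $(f(a))(n-j) = f_{n-j}(a)$ and $(f(b))(k+j) = f_{k+j}(b)$ reproduces the displayed sum $\sum_{k=0}^{n}\sum_{j=0}^{n-k}\binc{n}{k}\binc{n-k}{j}\lambda^{k}f_{n-j}(a)f_{k+j}(b)$, so that $f$ being an algebra homomorphism is equivalent to the Leibnitz-type relation on the $f_n$. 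Conversely, given the three relations on $(f_n)$, I would verify that the induced $f$ is a well-defined algebra homomorphism (multiplicativity from the Leibnitz relation, and preservation of the identity from $f_0 = \id_A$ together with $d^{(n)}(\bfone)=0$-type reasoning at the level of the induced structure) and satisfies both costructure axioms.

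I expect the main obstacle to be purely bookkeeping rather than conceptual: matching the index conventions in the $\lambda$-Hurwitz product against the double sum in Item~(\ref{seq}) so that the binomial coefficients and the power $\lambda^k$ align exactly, and confirming that the two directions of the correspondence are mutually inverse. One subtle point to check carefully is that preservation of the multiplicative identity by $f$ corresponds correctly to the given data; since every algebra homomorphism in $\ALG$ preserves the identity by our standing convention, the induced $f$ must send $\bfone_A$ to the identity of $A^{\NN}$, and verifying this compatibility with the relations $f_0 = \id_A$ and the Leibnitz relation is where I would be most careful.
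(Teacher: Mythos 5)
Your proposal is correct and follows essentially the same route as the paper: the equivalence of the first two items is quoted from Theorem~\ref{thm:comonadic}, and the equivalence of the last two is obtained by passing between $f$ and its components $f_n = p_n\circ f$, matching the counit, comultiplication, and multiplicativity conditions to $f_0=\id_A$, $f_m\circ f_n = f_{m+n}$, and the $\lambda$-Hurwitz (Leibnitz-type) identity respectively. Your explicit evaluation of $\delta_A\circ f = f^{\NN}\circ f$ at a pair of indices $(m,n)$ is exactly the verification the paper leaves to the reader.
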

\begin{proof}
The equivalence of Item~(\ref{der}) and Item~(\ref{costr}) is immediate from Theorem~\ref{thm:comonadic}.

To prove the equivalence of Item~(\ref{costr}) and Item~(\ref{seq}), first note the bijection of linear maps
\begin{eqnarray*}
\{f: A\to A^{\NN}\} &\longleftrightarrow & \{f_n:A\to A, n\geq 0\}, \\
 f &\longmapsto& f_n:=p_n \circ f, \\
 (f(a))(n):=f_n(a)  & \longleftarrow\hspace{-4pt}\raisebox{1.7pt}{\scalebox{0.4}{$\mid\hspace{-3pt}\mid\hspace{-3pt}\mid$}}
  & f_n,
\end{eqnarray*}
where
$$p_n:A^{\NN}\to A, \quad p_n(f):= f(n),$$
is the projection to the $n$-th components, that is, the evaluation at $n$.
One next verifies that under this bijection, an $f$ satisfies $\varepsilon\circ f=\id_A$ and $\delta_A\circ f=f^{\NN} \circ f$ if and only if the corresponding $(f_n)$ satisfies $f_0=\id_A$ and $f_m\circ f_n=f_{m+n}$ respectively. One finally checks that $f$ is an algebra homomorphism if and only if the corresponding $(f_n)$ satisfies $f_n(ab) = \sum\limits_{k=0}^{n}\sum\limits_{j=0}^{n-k}\binc{n}{k}\binc{n-k}{j}
\lambda^{k}f_{n-j}(a)f_{k+j}(b)$ for all $a,b \in A$ by the $\lambda$-Hurwitz product formula in Eq.~(\mref{eq:hurprod}).
\end{proof}

The following result will be used in the remainder of the paper.

\begin{lemma}
For any $f, g\in A^{\NN}$, $f = g$ if and only if $\varepsilon_A (f) = \varepsilon_A (g)$ and $\partial_A (f) = \partial_A (g)$. For any set $C$ and any maps $F, G: C\to A^{\NN}$, $F=G$ if and only if $\varepsilon_A \circ F =\varepsilon_A\circ G$ and $\partial_A\circ F =\partial_A\circ G$.
\mlabel{lem:head-tail}
\end{lemma}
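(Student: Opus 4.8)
The plan is to reduce everything to the elementary observation that a sequence $f\in A^{\NN}$ is completely recovered from its value $f(0)=\varepsilon_A(f)$ at $0$ together with its shift $\partial_A(f)$, since $(\partial_A(f))(n)=f(n+1)$ records all values $f(m)$ with $m\geq 1$. In both statements the forward implications are immediate, because equal inputs to the maps $\varepsilon_A$ and $\partial_A$ produce equal outputs; so the content lies entirely in the two reverse implications.

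First I would prove the statement for elements. Assume $\varepsilon_A(f)=\varepsilon_A(g)$ and $\partial_A(f)=\partial_A(g)$. The first equality gives $f(0)=g(0)$ directly from the definition $\varepsilon_A(f)=f(0)$. The second equality, evaluated at an arbitrary $n\in\NN$, reads $f(n+1)=(\partial_A(f))(n)=(\partial_A(g))(n)=g(n+1)$, so $f$ and $g$ agree on every positive integer. Combining the two, $f(m)=g(m)$ for all $m\in\NN$, hence $f=g$.

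Next I would deduce the statement for maps from the statement for elements by working pointwise on the source set $C$. Assuming $\varepsilon_A\circ F=\varepsilon_A\circ G$ and $\partial_A\circ F=\partial_A\circ G$, evaluate both hypotheses at an arbitrary $c\in C$ to obtain $\varepsilon_A(F(c))=\varepsilon_A(G(c))$ and $\partial_A(F(c))=\partial_A(G(c))$. Applying the first part with $f=F(c)$ and $g=G(c)$ yields $F(c)=G(c)$; since $c$ was arbitrary, $F=G$.

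There is no serious obstacle here: the only point requiring care is the bookkeeping of indices, namely that the shift $\partial_A$ accounts for exactly the tail $\{f(m):m\geq 1\}$ while $\varepsilon_A$ supplies the single missing head value $f(0)$, so that the two data together are equivalent to the full sequence. The lemma is really the ``head--tail'' principle for $A^{\NN}$ made explicit, and it is stated in this form precisely so that later arguments can verify an equality of maps into $A^{\NN}$ by checking it separately after composing with $\varepsilon_A$ and with $\partial_A$.
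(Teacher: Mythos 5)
Your proof is correct and follows essentially the same route as the paper: the forward implications are immediate, the reverse implication for elements comes from $\varepsilon_A$ recovering $f(0)$ and $\partial_A$ recovering $f(n)$ for $n\geq 1$, and the statement for maps follows pointwise. The paper's own proof is just a more terse version of exactly this argument.
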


\begin{proof}
For the first statement we only need to prove the ``if" part. But from $\varepsilon_A (f) = \varepsilon_A (g)$ we obtain $f(0)=g(0)$ and from $\partial_A (f) = \partial_A (g)$ we obtain $f(n)=g(n)$ for $n\geq 1$. Then the second statement follows.
\end{proof}

We now extend a Rota-Baxter operator on $A$ to one on $A^{\NN}$.

\begin{prop}
Let $P$ be a Rota-Baxter operator on $A$.
Define a $\bfk$-linear operator $$\widetilde{P}:A^{\NN}\rightarrow A^{\NN}, \quad
\widetilde{P}(f)(0)=P(f(0)), \quad \widetilde{P}(f)(n)=f(n-1),\quad f\in A^{\NN}, n\in \NN_{+}.$$  Then  $\widetilde{P}$ is a Rota-Baxter operator of weight $\lambda$ on $A^{\NN}$, $\varepsilon_A \circ \widetilde{P} = P \circ \varepsilon_A$ and $\partial_A \circ \widetilde{P} = {\rm id}_{A^{\NN}}.$
\mlabel{pp:rbext}
\end{prop}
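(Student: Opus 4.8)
The plan is to establish the two compatibility identities first, since they are immediate from the definitions and are precisely the tools needed to handle the Rota-Baxter identity. For $\varepsilon_A \circ \widetilde{P} = P \circ \varepsilon_A$, I would simply evaluate at the zeroth component: $\varepsilon_A(\widetilde{P}(f)) = \widetilde{P}(f)(0) = P(f(0)) = P(\varepsilon_A(f))$. For $\partial_A \circ \widetilde{P} = \id_{A^{\NN}}$, evaluating at any $n\in\NN$ gives $(\partial_A \widetilde{P}(f))(n) = \widetilde{P}(f)(n+1) = f(n)$, using $n+1\geq 1$ so that the shift clause of the definition applies. Both are one-line checks.

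The substantive part is the Rota-Baxter identity $\widetilde{P}(f)\widetilde{P}(g) = \widetilde{P}(f\widetilde{P}(g)) + \widetilde{P}(g\widetilde{P}(f)) + \lambda \widetilde{P}(fg)$, where the product is the $\lambda$-Hurwitz product of Eq.~(\ref{eq:hurprod}). Rather than expanding both sides with the double-sum formula --- which would be the main obstacle if attempted head-on --- I would invoke Lemma~\ref{lem:head-tail}: two elements of $A^{\NN}$ coincide once they agree under $\varepsilon_A$ (the ``head'', the value at $0$) and under $\partial_A$ (the ``tail'', the shift). This reduces the identity to two simpler checks and avoids the Hurwitz product entirely.

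For the head, I would first observe that $\varepsilon_A$ is an algebra homomorphism, since Eq.~(\ref{eq:hurprod}) gives $(fg)(0) = f(0)g(0)$. Applying $\varepsilon_A$ to the left side and using $\varepsilon_A \circ \widetilde{P} = P \circ \varepsilon_A$ yields $P(f(0))P(g(0))$, while applying it to the right side yields $P(f(0)P(g(0))) + P(g(0)P(f(0))) + \lambda P(f(0)g(0))$; these agree by the Rota-Baxter identity~(\ref{eq:bax1}) for $P$ on $A$. For the tail, I would use that $\partial_A$ is a $\lambda$-derivation together with $\partial_A \circ \widetilde{P} = \id$: expanding $\partial_A(\widetilde{P}(f)\widetilde{P}(g))$ by the weight-$\lambda$ Leibniz rule gives $f\widetilde{P}(g) + \widetilde{P}(f)g + \lambda fg$, whereas the right side gives $f\widetilde{P}(g) + g\widetilde{P}(f) + \lambda fg$; these coincide by commutativity of $A^{\NN}$. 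With head and tail matching, Lemma~\ref{lem:head-tail} closes the argument.

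The only real subtlety --- and the reason the proof comes out short --- is recognizing that the two auxiliary identities convert the weight-$\lambda$ Rota-Baxter condition on $A^{\NN}$ into, respectively, the weight-$\lambda$ Rota-Baxter condition on $A$ (through the head) and a trivial commutativity statement (through the tail). A direct verification at each level $n$ via Eq.~(\ref{eq:hurprod}) is possible but combinatorially heavy; the head-tail reduction is what keeps everything manageable, and it is the one step I would be careful to justify cleanly.
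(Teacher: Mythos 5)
Your proposal is correct and follows essentially the same route as the paper: the two auxiliary identities are verified by direct evaluation, and the Rota--Baxter identity on $A^{\NN}$ is reduced via Lemma~\ref{lem:head-tail} to a head check (which becomes the Rota--Baxter identity for $P$ on $A$, since $\varepsilon_A$ is an algebra homomorphism intertwining $\widetilde{P}$ with $P$) and a tail check (which follows from $\partial_A\circ\widetilde{P}=\id$ and the weight-$\lambda$ Leibniz rule). Your write-up in fact spells out the head and tail computations slightly more explicitly than the paper does.
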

\begin{proof}
It is clear that $\widetilde{P}$ is a $\bfk$-linear operator on $A^{\NN}$.
For any $f \in A^{\NN}$, we have
$$(\varepsilon_A \circ \widetilde{P})(f) = \varepsilon_A (\widetilde{P}(f)) = \widetilde{P}(f)(0) = P(f(0)) = P(\varepsilon_A(f)) = (P \circ \varepsilon_A)(f),$$
so that $\varepsilon_A \circ \widetilde{P} = P \circ \varepsilon_A$.
Next, for $f \in A^{\NN}$ and $n \in \NN$, we have
$$(\partial_A \circ \widetilde{P})(f)(n) = (\partial_A  (\widetilde{P}(f)))(n) = (\widetilde{P}(f))(n+1) = f(n+1-1) = f(n),$$ so that $\partial_A \circ \widetilde{P} = {\rm id}_{A^{\NN}}.$

We next show that $\widetilde{P}$ is a Rota-Baxter operator of weight $\lambda$ on $A^{\NN}$, i.e., that the equation
$$\widetilde{P}(f) \widetilde{P}(g) = \widetilde{P}(\widetilde{P}(f) g)+\widetilde{P}(f \widetilde{P}(g))+
\lambda\widetilde{P}(f g)$$
holds for any $f, g \in A^{\NN}$. By Lemma~\ref{lem:head-tail}, it's enough to show that
\begin{enumerate}
\item $\partial_A(\widetilde{P}(f) \widetilde{P}(g)) = \partial_A\Big( \widetilde{P}(\widetilde{P}(f) g)+\widetilde{P}(f \widetilde{P}(g))+
\lambda\widetilde{P}(f g)\Big)$, and
\item $\varepsilon_A(\widetilde{P}(f) \widetilde{P}(g)) = \varepsilon_A\Big( \widetilde{P}(\widetilde{P}(f) g)+\widetilde{P}(f \widetilde{P}(g))+
\lambda\widetilde{P}(f g)\Big)$.
\end{enumerate}
The first item follows from $\partial_A \circ \widetilde{P} = {\rm id}_{A^{\NN}}$ and the fact that $\partial_A$ is a $\lambda$-derivation on $A^{\NN}.$
The second follows from the fact that $\varepsilon_A : A^{\NN} \rightarrow A$ is an algebra homomorphism, that $\varepsilon_A(\widetilde{P}(f)) = P(\varepsilon_A(f))$ for any $f \in A^{\NN}$ and that $P$ is a Rota-Baxter operator on $A$.
\end{proof}

\begin{lemma}
If $f : (A, P) \rightarrow (B, Q)$ is a morphism of Rota-Baxter algebras, then $f^{\NN} : (A^{\NN}, \widetilde{P}) \rightarrow (B^{\NN}, \widetilde{Q})$ is also a morphism of Rota-Baxter algebras. Also, $\varepsilon_B \circ f^{\NN} = f \circ \varepsilon_A$ and $\partial_B \circ f^{\NN} = f^{\NN} \circ \partial_A$.
\mlabel{lem:Nmorph}
\end{lemma}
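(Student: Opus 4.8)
The plan is to split the statement into three parts and treat them in increasing order of difficulty, reserving the only genuine content for the commutation of $f^{\NN}$ with the extended Rota-Baxter operators. Recall that a morphism of Rota-Baxter algebras is an algebra homomorphism that intertwines the two operators, so to prove the first assertion I must show both that $f^{\NN}$ is an algebra homomorphism and that $f^{\NN} \circ \widetilde{P} = \widetilde{Q} \circ f^{\NN}$.

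First I would dispose of the two naturality identities and the algebra-homomorphism requirement. Since $f^{\NN} = G(f)$ is the image of $f$ under the functor $G$, it is already an algebra (indeed $\lambda$-differential algebra) homomorphism, as observed right after the definition of $G$; in particular $\partial_B \circ f^{\NN} = f^{\NN} \circ \partial_A$ holds automatically, which is the last asserted identity. For $\varepsilon_B \circ f^{\NN} = f \circ \varepsilon_A$, I would simply evaluate both sides at an arbitrary $g \in A^{\NN}$: unwinding the definitions, $(\varepsilon_B \circ f^{\NN})(g) = (f^{\NN}(g))(0) = f(g(0)) = f(\varepsilon_A(g))$.

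The substantive claim is that $f^{\NN}$ intertwines $\widetilde{P}$ and $\widetilde{Q}$, i.e., $f^{\NN} \circ \widetilde{P} = \widetilde{Q} \circ f^{\NN}$ as maps $A^{\NN} \to B^{\NN}$. Here I would invoke the second (map-level) form of Lemma~\ref{lem:head-tail}: it suffices to show that $\varepsilon_B$ and $\partial_B$ agree after composition with each side. For the $\partial_B$ part, using $\partial_B \circ f^{\NN} = f^{\NN} \circ \partial_A$ together with the identity $\partial_A \circ \widetilde{P} = \id_{A^{\NN}}$ from Proposition~\ref{pp:rbext} gives $\partial_B \circ f^{\NN} \circ \widetilde{P} = f^{\NN} \circ \partial_A \circ \widetilde{P} = f^{\NN}$, while $\partial_B \circ \widetilde{Q} \circ f^{\NN} = f^{\NN}$ follows directly from $\partial_B \circ \widetilde{Q} = \id_{B^{\NN}}$. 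For the $\varepsilon_B$ part, using $\varepsilon_B \circ f^{\NN} = f \circ \varepsilon_A$ and $\varepsilon_A \circ \widetilde{P} = P \circ \varepsilon_A$ yields $\varepsilon_B \circ f^{\NN} \circ \widetilde{P} = f \circ P \circ \varepsilon_A$, while $\varepsilon_B \circ \widetilde{Q} \circ f^{\NN} = Q \circ \varepsilon_B \circ f^{\NN} = Q \circ f \circ \varepsilon_A$; these coincide precisely because $f$ is a Rota-Baxter homomorphism, i.e., $f \circ P = Q \circ f$.

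I do not expect any real obstacle here: every ingredient is already in hand, since Proposition~\ref{pp:rbext} supplies the three structural identities for $\widetilde{P}$ and $\widetilde{Q}$, and the head-tail lemma reduces an equality of sequences to an equality at the ``head'' and the ``tail''. If one prefers to bypass Lemma~\ref{lem:head-tail}, the same conclusion follows by a one-line evaluation at each $n \in \NN$, splitting into the case $n = 0$, where the Rota-Baxter intertwining $f \circ P = Q \circ f$ enters, and the case $n \geq 1$, where both sides reduce to $f(g(n-1))$. The only index at which the hypothesis on $f$ is actually used is $0$; at every positive index the two operators act by a pure shift and therefore commute with $f^{\NN}$ trivially.
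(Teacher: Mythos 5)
Your proposal is correct and follows essentially the same route as the paper: both reduce the identity $\widetilde{Q}\circ f^{\NN}=f^{\NN}\circ\widetilde{P}$ to its $\varepsilon$- and $\partial$-components via Lemma~\ref{lem:head-tail}, using the identities $\partial\circ\widetilde{P}=\id$ and $\varepsilon\circ\widetilde{P}=P\circ\varepsilon$ from Proposition~\ref{pp:rbext} together with $f\circ P=Q\circ f$. The only cosmetic difference is that you obtain $\partial_B\circ f^{\NN}=f^{\NN}\circ\partial_A$ from the functoriality of $G$ rather than by the paper's direct pointwise check, which is equally valid.
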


\begin{proof}  We first show that $\varepsilon_B \circ f^{\NN} = f \circ \varepsilon_A$.
Suppose that $h \in A^{\NN}$.
Then
$$(\varepsilon_B \circ f^{\NN})(h) = \varepsilon_{B}(f^{\NN}(h)) = (f^{\NN}(h))(0) = f(h(0)) = f(\varepsilon_A(h)) = (f \circ \varepsilon_A)(h).$$
The proof of $\partial_B \circ f^{\NN} = f^{\NN} \circ \partial_A$ is similar.

To show that $f^{\NN}$ is a morphism of Rota-Baxter algebra, once again we use Lemma~\ref{lem:head-tail}.  We need to show that $\widetilde{Q} \circ f^{\NN} = f^{\NN} \circ \widetilde{P}$.  So we will show both

\begin{enumerate}
\item $\varepsilon_B \circ \widetilde{Q} \circ f^{\NN} = \varepsilon_B \circ f^{\NN} \circ \widetilde{P}$, and
\item $\partial_B \circ \widetilde{Q} \circ f^{\NN} = \partial_B \circ f^{\NN} \circ \widetilde{P}$.
\end{enumerate}
For the first equation,
$$\varepsilon_B \circ \widetilde{Q} \circ f^{\NN} =
Q \circ \varepsilon_B \circ f^{\NN} =
Q \circ f \circ \varepsilon_A  =
f \circ P \circ \varepsilon_A  =
f \circ \varepsilon_A \circ \widetilde{P}  =
\varepsilon_B \circ f^{\NN} \circ \widetilde{P}.$$
For the second equation,
$$\partial_B \circ \widetilde{Q} \circ f^{\NN} = {\rm id}_{B^{\NN}} \circ f^{\NN} =
f^{\NN} \circ {\rm id}_{A^{\NN}}  =
f^{\NN} \circ \partial_A \circ \widetilde{P}  =
\partial_B \circ f^{\NN} \circ \widetilde{P}.$$
\end{proof}

Additional properties of $\lambda$-differential algebras, and of the cofree $\lambda$-differential algebra $(A^{\NN},\partial_A)$ of $\lambda$-Hurwitz series over $A$, will be considered in a subsequent paper.

\section{Mixed distributive laws}
\mlabel{sec:mix}
In this section, we establish a mixed distributive law of the monad $\T$ giving Rota-Baxter algebras over the comonad $\C$ giving differential algebras. This mixed distributive law is described in Section~\mref{ss:mdl}. The prove of the mixed distributive law is given in Section~\mref{ss:mdlproof}.

\subsection{The mixed distributive law for Rota-Baxter algebras over differential algebras}
\mlabel{ss:mdl}
We first recalling from~\cite{HW} some background information on mixed distributive laws, a generalization of the notion of a distributive law introduced by J. Beck in his fundamental work~\cite{Be}.
\begin{defn}
Given a category $\A$, a monad ${\mathbf{T}} = \langle T, \eta, \mu \rangle$ on $\A$ and a comonad $\C = \langle C, \varepsilon, \delta\rangle$ on $\A$, then
a {\bf mixed distributive law of $\T$ over $\C$} is a natural transformation $\beta: TC \rightarrow CT$ such that

\begin{enumerate}
\item $\beta \circ \eta C = C\eta$;
\item $\varepsilon T \circ \beta = T\varepsilon$;
\item $\delta T \circ \beta = C\beta \circ \beta C \circ T\delta$ and
\item $\beta \circ \mu C = C\mu \circ \beta T \circ T\beta$.
\end{enumerate}
That is, the diagrams

\[\xymatrix{
&C\ar[dr]^{C\eta} \ar[dl]_{\eta C}&\\
TC\ar[rr]_{\beta}\ar[dr]_{T\varepsilon}&& CT
\ar[dl]^{\varepsilon T}\\
&T&
}\]
\[
\begin{array}{c}
\xymatrix{
TC\ar[rr]^{\beta} \ar[d]_{T\delta} && CT \ar[d]^{\delta T}\\
TCC\ar[r]_{\beta C}&CTC \ar[r]_{C \beta }& CCT
}
\end{array}
 \text{and}
\begin{array}{c}
\xymatrix{
TTC\ar[r]^{T\beta} \ar[d]_{\mu C} & TCT \ar[r]^{\beta T}&CTT \ar[d]^{C \mu}\\
TC\ar[rr]_{\beta}&& CT
}
\end{array}
\]
commute.
\mlabel{def:mdl}
\end{defn}

Such a mixed distributive law gives rise to a comonad $\widetilde{\C}$ on the category $\A^{\T}$ of $\T$-algebras which lifts $\C$ and a monad $\widetilde{\T}$ on the category $\A_{\C}$ of $\C$-coalgebras which lifts $\T$. See Corollary~\mref{coro:lifting} for the precise meaning of lifting.
Furthermore $(\A_{\C})^{\widetilde{\T}} \cong (\A^{\T})_{\widetilde{\C}}$.
We will apply this to the specific situation where $\A = \ALG$ and $\T$ and $\C$ are the monad giving Rota-Baxter algebras and the comonad giving differential algebras, respectively.

Recall that the adjunction $$\langle V,G,\eta,\varepsilon\rangle:\DIF\rightharpoonup \ALG$$
gives rise to a comonad $\C=\langle C,\varepsilon,\delta\rangle$ on $\ALG$. For any $A\in \ALG$, $C(A)$ is the algebra of Hurwitz series $D(A)$ with the $\lambda$-Hurwitz multiplication, which we continue to write as $A^{\NN}$.

Similarly, the adjunction $$\langle F,U,\eta,\varepsilon\rangle:\ALG\rightharpoonup \RBA$$
gives rise to a monad $\T=\langle T,\eta,\mu\rangle$ on $\ALG$.  Here
$T(A)$ is the algebra $\sha(A)$ with the multiplication $\diamond$.

By the definitions of $C$ and $T$, $C(T(A))$ is the algebra $(\sha(A))^{\NN}$.
By Proposition~\mref{pp:rbext}, the Rota-Baxter operator $P_A$ on $\sha(A)$ induces a Rota-Baxter operator $\widetilde{P_A}$ on $(\sha(A))^{\NN}$,
so that $((\sha(A))^{\NN}, \widetilde{P_A})$ is a Rota-Baxter algebra.

\begin{lemma}
For any algebra $A$, there is a unique Rota-Baxter algebra homomorphism $$\beta_{A}:(\sha(A^{\NN}), P_{A^{\NN}})\rightarrow ((\sha(A))^{\NN}, \widetilde{P_{A}})$$
such that the equation
\begin{equation}
(\eta_{A})^{\NN}=\beta_{A}\circ \eta_{A^{\NN}}
\mlabel{eq:c0}
\end{equation}
holds.
\mlabel{lem:beta}
\end{lemma}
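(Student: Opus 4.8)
The plan is to read this lemma as nothing more than the universal property of the free Rota-Baxter algebra on the algebra $A^{\NN}$, as recorded in Theorem~\ref{thm:shua}. To invoke that universal property I need exactly two ingredients: a target Rota-Baxter algebra, and an algebra homomorphism from $A^{\NN}$ into the underlying algebra of that target.

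First I would pin down the target. By Proposition~\ref{pp:rbext}, applied with the free Rota-Baxter algebra $(\sha(A), P_A)$ playing the role of $(A,P)$ there, the operator $\widetilde{P_A}$ is a Rota-Baxter operator of weight $\lambda$ on $(\sha(A))^{\NN}$, so that $((\sha(A))^{\NN}, \widetilde{P_A})$ is genuinely a Rota-Baxter algebra. This is the $(R,P)$ to which I will apply the universal property. Next I would produce the homomorphism on generators. The Rota-Baxter unit $\eta_A = j_A : A \to \sha(A)$ is an algebra homomorphism, and the construction in Section~\ref{sec:dif} shows that $(-)^{\NN}$ carries any algebra homomorphism to an algebra homomorphism; hence $(\eta_A)^{\NN} : A^{\NN} \to (\sha(A))^{\NN}$ is an algebra homomorphism.

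With these in hand, Theorem~\ref{thm:shua}, applied with $A^{\NN}$ as the base algebra, yields a unique Rota-Baxter algebra homomorphism $\beta_A : (\sha(A^{\NN}), P_{A^{\NN}}) \to ((\sha(A))^{\NN}, \widetilde{P_A})$ whose restriction along the canonical embedding $\eta_{A^{\NN}} = j_{A^{\NN}}$ equals the prescribed map $(\eta_A)^{\NN}$; that is, $(\eta_A)^{\NN} = \beta_A \circ \eta_{A^{\NN}}$, which is precisely Eq.~(\ref{eq:c0}). Both the existence and the uniqueness asserted in the lemma are delivered verbatim by the universal property, so no separate verification or computation is required.

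I do not expect a genuine obstacle here, since the entire content is the assembly of the correct data to feed into freeness. The only points demanding care are the two bookkeeping verifications above: that $\widetilde{P_A}$ really satisfies the weight-$\lambda$ Rota-Baxter identity, which is already discharged by Proposition~\ref{pp:rbext}, and that $(\eta_A)^{\NN}$ is multiplicative, which rests on the functoriality of $(-)^{\NN}$ with respect to the $\lambda$-Hurwitz product established in Section~\ref{sec:dif}. Once these are cited, the conclusion is immediate.
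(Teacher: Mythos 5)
Your proposal is correct and follows exactly the paper's own argument: establish that $((\sha(A))^{\NN},\widetilde{P_A})$ is a Rota-Baxter algebra via Proposition~\ref{pp:rbext}, note that $(\eta_A)^{\NN}$ is an algebra homomorphism, and invoke the universal property of the free Rota-Baxter algebra on $A^{\NN}$ from Theorem~\ref{thm:shua}. No differences worth noting.
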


\begin{proof}
Recall that $\eta_A: A \rightarrow \sha(A)$ is a natural algebra homomorphism.
Moreover, $(\eta_{A})^{\NN}:A^{\NN}\rightarrow (\sha(A))^{\NN}$ is an algebra homomorphism.
By the universal property of the free
Rota-Baxter algebra on $A^{\NN}$, there is a unique Rota-Baxter algebra homomorphism
$$\beta_{A}:(\sha(A^{\NN}), P_{A^{\NN}})\rightarrow ((\sha(A))^{\NN}, \widetilde{P_{A}})$$
such that Eq.~(\mref{eq:c0}) holds.
\end{proof}

\begin{lemma}
The above defined $\beta_{A}:\sha(A^{\NN})\rightarrow (\sha(A))^{\NN}$ for any $A\in \ALG$ gives a natural transformation $\beta:TC\rightarrow CT$.
\mlabel{lem:nt}
\end{lemma}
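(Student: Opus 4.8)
The plan is to prove naturality by showing that for any algebra homomorphism $\phi: A \to B$, the square relating $\beta_A, \beta_B$ and the two functors $TC$ and $CT$ commutes, i.e. $\beta_B \circ TC(\phi) = CT(\phi) \circ \beta_A$. Unwinding the definitions, $TC(\phi) = \sha(\phi^{\NN})$ and $CT(\phi) = (\sha(\phi))^{\NN}$, so the goal is the equation $\beta_B \circ \sha(\phi^{\NN}) = (\sha(\phi))^{\NN} \circ \beta_A$ of maps $\sha(A^{\NN}) \to (\sha(B))^{\NN}$. The key observation is that both composites are Rota-Baxter algebra homomorphisms out of the \emph{free} Rota-Baxter algebra $(\sha(A^{\NN}), P_{A^{\NN}})$: the left side is $\beta_B$ (a Rota-Baxter homomorphism by Lemma~\ref{lem:beta}) precomposed with $\sha(\phi^{\NN}) = F(\phi^{\NN})$, which is a Rota-Baxter homomorphism by functoriality of $F$; the right side is $\beta_A$ postcomposed with $(\sha(\phi))^{\NN}$, which is a Rota-Baxter homomorphism by Lemma~\ref{lem:Nmorph} applied to the Rota-Baxter morphism $\sha(\phi): (\sha(A), P_A) \to (\sha(B), P_B)$.

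Since both sides are Rota-Baxter algebra homomorphisms out of a free Rota-Baxter algebra, by the universal property in Theorem~\ref{thm:shua} they are determined by their restrictions along the canonical embedding $\eta_{A^{\NN}}: A^{\NN} \to \sha(A^{\NN})$. So it suffices to check the equality after precomposing with $\eta_{A^{\NN}}$, reducing the problem to an identity of ordinary algebra homomorphisms $A^{\NN} \to (\sha(B))^{\NN}$. First I would compute the left composite: using $\sha(\phi^{\NN}) \circ \eta_{A^{\NN}} = \eta_{B^{\NN}} \circ \phi^{\NN}$ (naturality of $\eta$ for the monad $\T$, established in Section~\ref{sec:rba}) and then the defining relation $\beta_B \circ \eta_{B^{\NN}} = (\eta_B)^{\NN}$ from Eq.~(\ref{eq:c0}), the left side becomes $(\eta_B)^{\NN} \circ \phi^{\NN}$. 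For the right composite, Eq.~(\ref{eq:c0}) for $\beta_A$ gives $\beta_A \circ \eta_{A^{\NN}} = (\eta_A)^{\NN}$, so the right side becomes $(\sha(\phi))^{\NN} \circ (\eta_A)^{\NN}$.

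The remaining task is therefore to verify $(\eta_B)^{\NN} \circ \phi^{\NN} = (\sha(\phi))^{\NN} \circ (\eta_A)^{\NN}$. This follows immediately from the functoriality of $(-)^{\NN}$ on algebra homomorphisms (so that $\psi^{\NN} \circ \chi^{\NN} = (\psi \circ \chi)^{\NN}$), which reduces the claim to $\eta_B \circ \phi = \sha(\phi) \circ \eta_A$ — precisely the naturality of $\eta$ for the Rota-Baxter monad, already available from Section~\ref{sec:rba}. I do not expect any serious obstacle here: the entire argument is a routine diagram chase, and the only point requiring a little care is the bookkeeping to confirm that both composites genuinely land in the class of Rota-Baxter homomorphisms so that the universal property of $\sha(A^{\NN})$ can legitimately be invoked to pass from the free generators to the whole algebra.
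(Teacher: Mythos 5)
Your proposal is correct and follows essentially the same route as the paper's proof: both composites are identified as Rota-Baxter algebra homomorphisms out of the free Rota-Baxter algebra $(\sha(A^{\NN}),P_{A^{\NN}})$ (using Lemma~\ref{lem:Nmorph} for $(\sha(\phi))^{\NN}$), and the uniqueness clause of the universal property reduces the check to precomposition with $\eta_{A^{\NN}}$, which is then settled by Eq.~(\ref{eq:c0}), naturality of $\eta$, and functoriality of $(-)^{\NN}$. The paper carries out the same diagram chase in a single displayed chain of equalities; no substantive difference.
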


\begin{proof}
It is enough to show that for any morphism $\varphi:A\rightarrow B$ in $\ALG$, the equation
\begin{equation}
(\sha(\varphi))^{\NN}\circ \beta_A=\beta_{B}\circ \sha(\varphi^{\NN})
\mlabel{eq:c1}
\end{equation}
holds.

By the naturality of $\eta$, the equations
\begin{equation}\sha(\varphi^{\NN})\circ \eta_{A^{\NN}}=\eta_{B^{\NN}}\circ \varphi^{\NN}
\mlabel{eq:c2}
\end{equation}
and
\begin{equation}\sha(\varphi)\circ \eta_{A}=\eta_{B}\circ \varphi
\mlabel{eq:c3}
\end{equation}
hold.

We claim that $(\sha(\varphi))^{\NN}:(\sha(A))^{\NN} \rightarrow (\sha(B))^{\NN}$ is a Rota-Baxter algebra homomorphism.  This is a consequence of Lemma~\ref{lem:Nmorph}, since $\sha(\varphi):\sha(A) \rightarrow \sha(B)$ is a Rota-Baxter algebra homomorphism.

Because $\beta_A$, $\beta_B$ and $(\sha(\varphi))^{\NN}$ are Rota-Baxter algebra homomorphisms,
the compositions $(\sha(\varphi))^{\NN}\circ \beta_A$ and $\beta_B\circ \sha(\varphi^{\NN})$
are Rota-Baxter algebra homomorphisms, too.

Next, we have
\begin{eqnarray*}
((\sha(\varphi))^{\NN}\circ \beta_A)\circ \eta_{A^{\NN}}&=&(\sha(\varphi))^{\NN}\circ (\eta_{A})^{\NN} \quad \text{(by Eq.~(\ref{eq:c0}))}   \\
&=&(\sha(\varphi) \circ \eta_{A})^{\NN}\\
&=&(\eta_{B} \circ \varphi)^{\NN}\quad \quad \quad \quad \text{(by Eq.~(\ref{eq:c3}))} \\
&=&(\eta_{B})^{\NN}\circ \varphi^{\NN}  \\
&=&(\beta_B\circ \eta_{B^{\NN}})\circ \varphi^{\NN}\quad \quad \text{(by Eq.~(\ref{eq:c0}))}   \\
&=&\beta_B\circ (\eta_{B^{\NN}}\circ \varphi^{\NN})\\
&=&\beta_{B}\circ(\sha(\varphi^{\NN})\circ \eta_{A^{\NN}})\quad \text{(by Eq.~(\ref{eq:c2}))}   \\
&=&(\beta_{B}\circ \sha(\varphi^{\NN}))\circ \eta_{A^{\NN}}.
\end{eqnarray*}
By the universal property of a free
Rota-Baxter algebra on the algebra $A^{\NN}$,
we have
$$(\sha(\varphi))^{\NN}\circ \beta_A = \beta_{B}\circ \sha(\varphi^{\NN}),$$
as desired.
\end{proof}

Now we can state our main result of this section.
\begin{theorem}
The natural transformation $\beta: TC \rightarrow CT$ given by  $\beta_{A}:\sha(A^{\NN})\rightarrow (\sha(A))^{\NN}$
is a mixed distributive law of\, $\T$ over $\C$.
\mlabel{thm:cddist}
\end{theorem}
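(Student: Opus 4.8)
The plan is to verify the four axioms of Definition~\ref{def:mdl} one at a time. Axiom (1), namely $\beta\circ\eta C = C\eta$, is nothing but the defining equation Eq.~(\ref{eq:c0}) for $\beta$, since $(\eta C)_A=\eta_{A^{\NN}}$ and $(C\eta)_A=(\eta_A)^{\NN}$; it therefore holds by construction. For each of the remaining three axioms I would use the same two-step strategy: first show that both composites appearing in the axiom are \emph{Rota-Baxter algebra homomorphisms} out of a free Rota-Baxter algebra, namely $\sha(A^{\NN})$ for axioms (2) and (3) and $\sha(\sha(A^{\NN}))$ for axiom (4); then invoke the universal property of Theorem~\ref{thm:shua} to reduce the desired identity to an equality of the underlying algebra maps on the generating algebra, i.e.\ after precomposing with the canonical embedding $\eta_{A^{\NN}}$ (resp.\ $\eta_{\sha(A^{\NN})}$). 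On the generators the identities collapse to formal consequences of Eq.~(\ref{eq:c0}), the naturality of $\eta$ and $\delta$, and the monad unit laws.

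For axiom (2), $\varepsilon T\circ\beta = T\varepsilon$, the map $\varepsilon_{\sha(A)}$ is a Rota-Baxter homomorphism $((\sha(A))^{\NN},\widetilde{P_A})\to(\sha(A),P_A)$ by Proposition~\ref{pp:rbext}, and $\beta_A$ is one by Lemma~\ref{lem:beta}, so $\varepsilon_{\sha(A)}\circ\beta_A$ is a Rota-Baxter homomorphism; so is $\sha(\varepsilon_A)=F(\varepsilon_A)$. Precomposing both with $\eta_{A^{\NN}}$ and using Eq.~(\ref{eq:c0}), Lemma~\ref{lem:Nmorph} and the naturality of $\eta$, both sides become $\eta_A\circ\varepsilon_A$, which gives the claim. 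For axiom (4), $\beta\circ\mu C = C\mu\circ\beta T\circ T\beta$, the multiplication $\mu_{A^{\NN}}$ is a Rota-Baxter homomorphism $(\sha(\sha(A^{\NN})),P_{\sha(A^{\NN})})\to(\sha(A^{\NN}),P_{A^{\NN}})$, being the image under $U$ of a morphism in $\RBA$ (recall $\mu=U\varepsilon F$); combined with $\beta_A$, with $\sha(\beta_A)=F(\beta_A)$, with $\beta_{\sha(A)}$, and with $(\mu_A)^{\NN}$ (a Rota-Baxter homomorphism by Lemma~\ref{lem:Nmorph}), this shows both composites are Rota-Baxter homomorphisms out of $\sha(\sha(A^{\NN}))$. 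Precomposing with $\eta_{\sha(A^{\NN})}$ and repeatedly applying Eq.~(\ref{eq:c0}), the naturality of $\eta$, and the monad unit law $\mu\circ\eta T=\id$, both sides reduce to $\beta_A$.

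Axiom (3), $\delta T\circ\beta = C\beta\circ\beta C\circ T\delta$, is where the only genuinely new computation occurs, and I expect it to be the main obstacle. The right-hand composite is a Rota-Baxter homomorphism because $\sha(\delta_A)=F(\delta_A)$ and $\beta_{A^{\NN}}$ are (the latter by Lemma~\ref{lem:beta} applied to $A^{\NN}$), while $(\beta_A)^{\NN}$ is one by Lemma~\ref{lem:Nmorph}; its codomain carries the twice-extended operator $\widetilde{\widetilde{P_A}}$. For the left-hand composite $\delta_{\sha(A)}\circ\beta_A$ to be a Rota-Baxter homomorphism into the \emph{same} Rota-Baxter algebra $(((\sha(A))^{\NN})^{\NN},\widetilde{\widetilde{P_A}})$, one must check that the comultiplication intertwines the extended operators, i.e.\ that
\[
\delta_R\circ\widetilde{P}=\widetilde{\widetilde{P}}\circ\delta_R
\]
for a Rota-Baxter algebra $(R,P)$, applied here with $(R,P)=(\sha(A),P_A)$. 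This is a short direct verification from $(\delta_R(f)(m))(n)=f(m+n)$ and the head/tail description of $\widetilde{P}$ in Proposition~\ref{pp:rbext}, splitting into the cases $m=0$ and $m\geq 1$. Granting it, both sides of axiom (3) are Rota-Baxter homomorphisms $\sha(A^{\NN})\to((\sha(A))^{\NN})^{\NN}$, and precomposition with $\eta_{A^{\NN}}$ together with Eq.~(\ref{eq:c0}) and the naturality of $\eta$ reduces the identity to $\delta_{\sha(A)}\circ(\eta_A)^{\NN}=((\eta_A)^{\NN})^{\NN}\circ\delta_A$, which is exactly the naturality of $\delta$ evaluated at $\eta_A$. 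Assembling the four verified axioms yields the theorem.
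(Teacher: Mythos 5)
Your proposal is correct and follows essentially the same route as the paper: axiom (1) is the defining equation of $\beta_A$, and each of the remaining axioms is verified by showing both composites are Rota-Baxter homomorphisms out of the relevant free Rota-Baxter algebra and then comparing them on generators via Eq.~(\ref{eq:c0}) and naturality. You correctly isolate the one substantive new computation, $\widetilde{\widetilde{P_A}}\circ\delta_{\ssha(A)}=\delta_{\ssha(A)}\circ\widetilde{P_A}$, which the paper checks with Lemma~\ref{lem:head-tail} rather than a direct case split on indices, an immaterial difference.
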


We will postpone the proof of the theorem to the next subsection and  first display some direct applications of the theorem. Further applications of the theorem will be given in Section~\mref{sec:diffrba}.
In particular we will investigate the categories $(\ALG_{\C})^{\widetilde{\T}} \cong (\ALG^{\T})_{\widetilde{\C}}$ and identify them in terms of a ``mixed" structure on $\ALG$.

\begin{coro}
The mixed distributive law $\beta: TC \rightarrow CT$ in Theorem~\ref{thm:cddist}
gives rise to a comonad $\widetilde{\C}$ on the category $\ALG^{\T}$ of $\T$-algebras which {\bf lifts $\C$} in the sense that the underlying functor $U^{\T}: \ALG^{\T} \rightarrow \ALG$ commutes with $\widetilde{\C}$ and $\C$, that is,
$$U^{\T} \widetilde{C} = C U^{\T}, \quad U^{\T}\widetilde{\varepsilon} = \varepsilon U^{\T}\quad \text{ and }\quad U^{\T}\widetilde{\delta} = \delta U^{\T}.$$
Similarly, the mixed distributive law $\beta$ gives rise to a monad $\widetilde{\T}$ on the category $\ALG_{\C}$ of $\C$-coalgebras which lifts $\T$.
Furthermore there is an isomorphism $\Phi:(\ALG_{\C})^{\widetilde{\T}} \to (\ALG^{\T})_{\widetilde{\C}}$ of categories.
\mlabel{coro:lifting}
\end{coro}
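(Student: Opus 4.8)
The statement is the standard package of consequences attached to any mixed distributive law, and our plan is to record the explicit lifted structures and verify that each of the four axioms of Definition~\ref{def:mdl} supplies exactly one of the required compatibilities; the underlying category-theoretic fact is the general theory recalled from~\cite{HW}. First I would construct the lifted comonad $\widetilde{\C}$ on $\ALG^{\T}$. On an object $\langle A,h\rangle$ of $\ALG^{\T}$ (so $h:TA\to A$) set $\widetilde{C}\langle A,h\rangle=\langle CA,\, Ch\circ\beta_A\rangle$ and $\widetilde{C}(\phi)=C(\phi)$ on morphisms. The unit law $(Ch\circ\beta_A)\circ\eta_{CA}=\id_{CA}$ reduces, using $\beta_A\circ\eta_{CA}=C\eta_A$ from axiom (a), to $Ch\circ C\eta_A=C(h\circ\eta_A)=\id_{CA}$; the associativity law $(Ch\circ\beta_A)\circ T(Ch\circ\beta_A)=(Ch\circ\beta_A)\circ\mu_{CA}$ follows by a short diagram chase combining the naturality of $\beta$ (Lemma~\ref{lem:nt}), the $\T$-algebra associativity $h\circ T(h)=h\circ\mu_A$, and axiom (d) $\beta\circ\mu C=C\mu\circ\beta T\circ T\beta$. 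Thus $\widetilde{C}$ is a well-defined endofunctor on $\ALG^{\T}$ lying over $C$.

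Next I would set $\widetilde{\varepsilon}_{\langle A,h\rangle}=\varepsilon_A$ and $\widetilde{\delta}_{\langle A,h\rangle}=\delta_A$ and check these are morphisms in $\ALG^{\T}$. That $\varepsilon_A:\widetilde{C}\langle A,h\rangle\to\langle A,h\rangle$ is a $\T$-algebra morphism is the identity $h\circ T\varepsilon_A=\varepsilon_A\circ(Ch\circ\beta_A)$, which follows from naturality of $\varepsilon$ together with axiom (b) $\varepsilon T\circ\beta=T\varepsilon$; that $\delta_A$ is a morphism $\widetilde{C}\langle A,h\rangle\to\widetilde{C}\widetilde{C}\langle A,h\rangle$ follows from naturality of $\delta$ together with axiom (c) $\delta T\circ\beta=C\beta\circ\beta C\circ T\delta$. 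Since $U^{\T}$ is faithful and carries $\widetilde{\varepsilon},\widetilde{\delta}$ to $\varepsilon,\delta$, the comonad coherence identities for $\widetilde{\C}$ are forced by those for $\C$, and the three lifting equations $U^{\T}\widetilde{C}=CU^{\T}$, $U^{\T}\widetilde{\varepsilon}=\varepsilon U^{\T}$, $U^{\T}\widetilde{\delta}=\delta U^{\T}$ hold by construction. The monad $\widetilde{\T}$ on $\ALG_{\C}$ is obtained by the dual recipe: on a $\C$-coalgebra $\langle A,f\rangle$ (so $f:A\to CA$) put $\widetilde{T}\langle A,f\rangle=\langle TA,\,\beta_A\circ Tf\rangle$, with unit $\widetilde{\eta}=\eta$ and multiplication $\widetilde{\mu}=\mu$; here axioms (b),(c) guarantee that $\beta_A\circ Tf$ is a $\C$-coalgebra structure and axioms (a),(d) guarantee that $\eta,\mu$ lift to $\ALG_{\C}$, dualizing the computations above.

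Finally I would produce the isomorphism $\Phi$. Unwinding definitions, an object of $(\ALG_{\C})^{\widetilde{\T}}$ is a triple $(A,h,f)$ in which $\langle A,h\rangle$ is a $\T$-algebra, $\langle A,f\rangle$ is a $\C$-coalgebra, and $h:\widetilde{T}\langle A,f\rangle\to\langle A,f\rangle$ is a morphism of $\C$-coalgebras, the latter condition being exactly $f\circ h=Ch\circ\beta_A\circ Tf$. Likewise an object of $(\ALG^{\T})_{\widetilde{\C}}$ is a triple $(A,h,f)$ with $\langle A,h\rangle$ a $\T$-algebra, $\langle A,f\rangle$ a $\C$-coalgebra, and $f:\langle A,h\rangle\to\widetilde{C}\langle A,h\rangle$ a morphism of $\T$-algebras, the latter condition being exactly $(Ch\circ\beta_A)\circ Tf=f\circ h$. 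These two compatibility equations coincide, so $\Phi$ may be taken to be the identity on the underlying data $(A,h,f)$ (and on morphisms), merely transposing which structure is regarded as inner and which as outer; this is manifestly a bijection on objects and on hom-sets, hence an isomorphism of categories.

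The genuinely load-bearing step, and the only place requiring care, is the verification that $Ch\circ\beta_A$ and $\beta_A\circ Tf$ are respectively a $\T$-algebra and a $\C$-coalgebra structure: these are the associativity and coassociativity chases that consume axioms (d) and (c) (together with the naturality of $\beta$ from Lemma~\ref{lem:nt} and of $\delta$). Everything else — the unit compatibilities, the lifting equations, and the final isomorphism — is bookkeeping once one observes that the single equation $f\circ h=Ch\circ\beta_A\circ Tf$ is precisely what both mixed categories impose.
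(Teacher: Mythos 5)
Your proof is correct, and every verification in it goes through: the four axioms of Definition~\ref{def:mdl} are consumed exactly where you say they are, and the observation that the single equation $f\circ h = Ch\circ\beta_A\circ Tf$ is what both composite categories impose is precisely what makes $\Phi$ the identity on underlying data. The difference from the paper is one of route rather than substance: the paper's proof of Corollary~\ref{coro:lifting} is a one-line citation to the general lifting theorems of Wolff, Van~Osdol and Keigher, deferring all content to the literature, whereas you carry out the standard diagram chases directly. (The paper does later, in Section~\ref{sec:diffrba}, write down the same explicit formulas $\widetilde{C}\langle A,h\rangle=\langle A^{\NN},h^{\NN}\circ\beta_A\rangle$ and $\widetilde{T}\langle A,f\rangle=\langle\sha(A),\beta_A\circ\sha(f)\rangle$ and notes that diagrams~(\ref{eq:(ALG^T)_C}) and~(\ref{eq:(ALG_C)^T}) coincide, which is exactly your argument for $\Phi$; so your write-up in effect supplies the verifications the paper leaves to the references.) What your approach buys is self-containedness and a transparent accounting of which axiom does what; what the citation buys is brevity and the reassurance that the statement holds at the level of generality of an arbitrary mixed distributive law on an arbitrary category, which your argument also achieves since nothing in it uses features specific to $\ALG$, $\T$ or $\C$.
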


\begin{proof}
This result is an immediate consequence of~\cite[Lemma 3.1]{Ke3}, ~\cite[Theorem IV.1]{Va}, ~\cite[Theorem 2.4]{Wolff} and Theorem~\ref{thm:cddist} above.
\end{proof}

\subsection{The proof of Theorem~\mref{thm:cddist}}
\mlabel{ss:mdlproof}
To prove the theorem, we just need to verify the commutativity of the three diagrams in the definition of a mixed distributive law. These verifications are carried out in the following three lemmas.

\begin{lemma}
The diagram
\begin{equation}
\begin{array}{c}
\xymatrix{
&A^{\NN}\ar[dr]^{(\eta_{A})^{\NN}} \ar[dl]_{\eta_{A^{\NN}}}&\\
\sha(A^{\NN})\ar[rr]_{\beta_{A}}\ar[dr]_{\ssha(\varepsilon_{A})}&& (\sha(A))^{\NN}\ar[dl]^{\varepsilon_{\tsha(A)}}\\
&\sha(A)&
}
\end{array}
\mlabel{eq:23j}
\end{equation}
commutes.
\mlabel{lem:23jiao}
\end{lemma}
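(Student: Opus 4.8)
The plan is to prove the commutativity of diagram (\ref{eq:23j}) by exploiting the universal property of the free Rota-Baxter algebra $\sha(A^{\NN})$ on $A^{\NN}$. Both composites $\varepsilon_{\ssha(A)} \circ \beta_A$ and $\ssha(\varepsilon_A)$ are maps out of $\sha(A^{\NN})$ into $\sha(A)$, so it suffices to check that they agree after precomposition with the universal map $\eta_{A^{\NN}}: A^{\NN} \rightarrow \sha(A^{\NN})$, \emph{provided} both are Rota-Baxter algebra homomorphisms. First I would establish that both composites are indeed Rota-Baxter algebra homomorphisms: the map $\ssha(\varepsilon_A) = \sha(\varepsilon_A)$ is one because $\varepsilon_A: A^{\NN} \rightarrow A$ is an algebra homomorphism and $\sha(-)$ is functorial into $\RBA$ (it sends $P_{A^{\NN}}$ to $P_A$), while $\varepsilon_{\ssha(A)} \circ \beta_A$ is a composite of Rota-Baxter homomorphisms — $\beta_A$ by Lemma~\ref{lem:beta}, and $\varepsilon_{\ssha(A)}: ((\sha(A))^{\NN}, \widetilde{P_A}) \rightarrow (\sha(A), P_A)$ by the $\varepsilon_A \circ \widetilde{P} = P \circ \varepsilon_A$ property from Proposition~\ref{pp:rbext} applied with $A$ replaced by $\sha(A)$ and $P = P_A$.

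Once both legs are known to be Rota-Baxter homomorphisms, the core computation is to verify the two equalities after precomposing with $\eta_{A^{\NN}}$. For the right-hand leg I would compute
\[
\varepsilon_{\ssha(A)} \circ \beta_A \circ \eta_{A^{\NN}}
= \varepsilon_{\ssha(A)} \circ (\eta_A)^{\NN},
\]
using Eq.~(\ref{eq:c0}) from Lemma~\ref{lem:beta}. Then the composite $\varepsilon_{\ssha(A)} \circ (\eta_A)^{\NN}: A^{\NN} \rightarrow \sha(A)$ sends $f \in A^{\NN}$ to $((\eta_A)^{\NN}(f))(0) = \eta_A(f(0)) = \eta_A(\varepsilon_A(f))$, using the definitions of $\varepsilon_{\ssha(A)}$ (evaluation at $0$) and of $(\eta_A)^{\NN}$. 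For the left-hand leg, $\ssha(\varepsilon_A) \circ \eta_{A^{\NN}} = \eta_A \circ \varepsilon_A$ by the naturality of $\eta$ (as in Eq.~(\ref{eq:c3}), with $\varphi$ replaced by $\varepsilon_A$). Thus both composites, precomposed with $\eta_{A^{\NN}}$, equal $\eta_A \circ \varepsilon_A$.

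Having matched the two Rota-Baxter homomorphisms on the image of $\eta_{A^{\NN}}$, the universal property of the free Rota-Baxter algebra $(\sha(A^{\NN}), P_{A^{\NN}})$ on $A^{\NN}$ (Theorem~\ref{thm:shua}) forces them to be equal on all of $\sha(A^{\NN})$, giving $\varepsilon_{\ssha(A)} \circ \beta_A = \ssha(\varepsilon_A)$, which is exactly the commutativity of the lower triangle. The commutativity of the upper triangle, $(\eta_A)^{\NN} = \beta_A \circ \eta_{A^{\NN}}$, is simply Eq.~(\ref{eq:c0}), so it requires no further argument.

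I expect the main obstacle to be purely bookkeeping: making sure that $\varepsilon_{\ssha(A)}$ is genuinely a Rota-Baxter homomorphism. This is the one nonformal input, and it relies on correctly instantiating Proposition~\ref{pp:rbext} at the algebra $\sha(A)$ with operator $P_A$, so that $\widetilde{P_A}$ on $(\sha(A))^{\NN}$ intertwines with $P_A$ via $\varepsilon_{\ssha(A)}$. Everything else reduces to the universal property and the explicit formulas for $\eta$ and $\varepsilon$, so no delicate estimates or inductive arguments on the mixable shuffle product are needed.
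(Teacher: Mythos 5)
Your proposal is correct and follows essentially the same route as the paper: reduce the lower triangle to an equality of Rota--Baxter homomorphisms out of the free object $\sha(A^{\NN})$, check both composites agree with $\eta_A\circ\varepsilon_A$ after precomposing with $\eta_{A^{\NN}}$ (using Eq.~(\mref{eq:c0}) and naturality), and invoke the universal property, with the upper triangle being Eq.~(\mref{eq:c0}) itself. Your explicit element-wise evaluation of $\varepsilon_{\ssha(A)}\circ(\eta_A)^{\NN}$ is just a concrete substitute for the paper's appeal to the naturality of $\varepsilon$.
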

\begin{proof}
By Lemma~\ref{lem:beta}, the upper triangle in diagram~(\ref{eq:23j}) is commutative.

Next observe that $\varepsilon_{\tsha{(A)}}: (\sha(A))^{\NN} \rightarrow \sha(A)$ is a Rota-Baxter algebra homomorphism by Proposition~\ref{pp:rbext}.
We know that $\beta_A$ is a Rota-Baxter algebra homomorphism and so
the composition $\varepsilon_{\tsha(A)}\circ \beta_{A}$ is also a Rota-Baxter algebra homomorphism.

By the universal property of the free Rota-Baxter algebra $\sha(A^{\NN})$ over $A^{\NN}$, to prove the commutativity of the lower triangle:
\begin{equation}
\sha(\varepsilon_{A} )=\varepsilon_{\tsha(A)}\circ \beta_{A},
\mlabel{eq:j6}
\end{equation}
we only need to verify
\begin{equation}
\sha(\varepsilon_{A} )\circ \eta_{A^{\NN}}=(\varepsilon_{\tsha(A)}\circ \beta_{A})\circ \eta_{A^{\NN}}.
\mlabel{eq:j5}
\end{equation}
But by the naturality of $\varepsilon$, we have
\begin{equation}
\varepsilon_{\tsha(A)}\circ (\eta_{A})^{\NN}= \eta_{A}\circ \varepsilon_{A}.
\mlabel{eq:j1}
\end{equation}
By the naturality of $\eta$, we have
\begin{equation}
\sha(\varepsilon_{A})\circ \eta_{A^{\NN}}=\eta_{A}\circ \varepsilon_{A}.
\mlabel{eq:j2}
\end{equation}
By Eq.~(\ref{eq:j1}) and Eq.~(\ref{eq:j2}), we have
\begin{equation}
\sha(\varepsilon_{A})\circ \eta_{A^{\NN}}=\varepsilon_{\tsha(A)}\circ (\eta_{A})^{\NN}.
\mlabel{eq:j3}
\end{equation}
By Eq.~(\ref{eq:j6}), we have
\begin{equation}
(\varepsilon_{\tsha(A)}\circ \beta_{A})\circ \eta_{{A}^{\NN}}=
\varepsilon_{\tsha(A)}\circ (\eta_{A})^{\NN}.
\mlabel{eq:j4}
\end{equation}
Then by Eq.~(\ref{eq:j3}) and Eq.~(\ref{eq:j4}), we obtain Eq.~(\mref{eq:j5}), as needed.
\end{proof}

\begin{lemma}
The diagram
\begin{equation}
\begin{array}{c}
\xymatrix{
\sha(A^{\NN})\ar[rr]^{\beta_{A}} \ar[d]_{\ssha(\delta_{A})} && (\sha(A))^{\NN} \ar[d]^{\delta_{\tsha(A)}}\\
\sha((A^{\NN})^{\NN})\ar[r]_{\beta_{A^{\NN}}}&
(\sha(A^{\NN}))^{\NN} \ar[r]_{(\beta_{A})^{\NN} }& ((\sha(A))^{\NN})^{\NN}
}
\end{array}
\mlabel{eq:1f}
\end{equation}
commutes.

\mlabel{lem:1fang}
\end{lemma}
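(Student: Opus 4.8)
The plan is to lean on the universal property of the free Rota-Baxter algebra $\sha(A^{\NN})$ on the algebra $A^{\NN}$: any two Rota-Baxter algebra homomorphisms out of $\sha(A^{\NN})$ coincide as soon as they agree after precomposition with the embedding $\eta_{A^{\NN}}\colon A^{\NN}\to\sha(A^{\NN})$. So the first task is to verify that both legs of diagram~(\ref{eq:1f}), namely $\delta_{\sha(A)}\circ\beta_A$ and $(\beta_A)^{\NN}\circ\beta_{A^{\NN}}\circ\sha(\delta_A)$, are Rota-Baxter algebra homomorphisms from $(\sha(A^{\NN}),P_{A^{\NN}})$ into $(((\sha(A))^{\NN})^{\NN},\widetilde{\widetilde{P_A}})$, where $\widetilde{\widetilde{P_A}}$ is the twofold extension of $P_A$ furnished by Proposition~\ref{pp:rbext}. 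Most of the constituent maps are already known to be Rota-Baxter homomorphisms: $\beta_A$ and $\beta_{A^{\NN}}$ by Lemma~\ref{lem:beta} (the latter applied to $A^{\NN}$), $\sha(\delta_A)=F(\delta_A)$ by functoriality of $F$ since $\delta_A$ is an algebra map, and $(\beta_A)^{\NN}$ by Lemma~\ref{lem:Nmorph} applied to $\beta_A$.

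The one point that is not immediate — and which I expect to be the main obstacle — is that $\delta_{\sha(A)}\colon((\sha(A))^{\NN},\widetilde{P_A})\to(((\sha(A))^{\NN})^{\NN},\widetilde{\widetilde{P_A}})$ is itself a Rota-Baxter algebra homomorphism, that is, $\delta_{\sha(A)}\circ\widetilde{P_A}=\widetilde{\widetilde{P_A}}\circ\delta_{\sha(A)}$. I would establish this with Lemma~\ref{lem:head-tail}, since the common codomain is a module of $\NN$-indexed functions. Applying $\varepsilon_{(\sha(A))^{\NN}}$ reduces the identity to the counit law $\varepsilon\circ\delta=\id$ together with $\varepsilon_{(\sha(A))^{\NN}}\circ\widetilde{\widetilde{P_A}}=\widetilde{P_A}\circ\varepsilon_{(\sha(A))^{\NN}}$ from Proposition~\ref{pp:rbext}; applying $\partial_{(\sha(A))^{\NN}}$ reduces it to $\partial\circ\widetilde{P_A}=\id$ from Proposition~\ref{pp:rbext} together with the fact that $\delta$ intertwines the shift derivations (which holds because $\delta$ is built from the differential-algebra adjunction unit). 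Either reduction is a short direct check; alternatively one unwinds the explicit formula $(\delta_B(f)(m))(n)=f(m+n)$ against the head/tail description of $\widetilde{P}$ and matches values index by index.

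Once both legs are known to be Rota-Baxter homomorphisms, I would precompose with $\eta_{A^{\NN}}$. On the outer leg, Eq.~(\ref{eq:c0}) gives $\beta_A\circ\eta_{A^{\NN}}=(\eta_A)^{\NN}$, so it becomes $\delta_{\sha(A)}\circ(\eta_A)^{\NN}$. On the lower leg, naturality of the embedding $\eta$ gives $\sha(\delta_A)\circ\eta_{A^{\NN}}=\eta_{(A^{\NN})^{\NN}}\circ\delta_A$; then Eq.~(\ref{eq:c0}) applied to $A^{\NN}$ gives $\beta_{A^{\NN}}\circ\eta_{(A^{\NN})^{\NN}}=(\eta_{A^{\NN}})^{\NN}$, and functoriality of $(-)^{\NN}$ together with Eq.~(\ref{eq:c0}) once more turns $(\beta_A)^{\NN}\circ(\eta_{A^{\NN}})^{\NN}$ into $((\eta_A)^{\NN})^{\NN}$, so the lower leg becomes $((\eta_A)^{\NN})^{\NN}\circ\delta_A$. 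Finally I would observe that the two reduced maps are equal precisely by naturality of the comonad comultiplication $\delta\colon C\to CC$ evaluated at the morphism $\eta_A\colon A\to\sha(A)$, namely $\delta_{\sha(A)}\circ(\eta_A)^{\NN}=((\eta_A)^{\NN})^{\NN}\circ\delta_A$. By the universal property this forces the two full legs to coincide, proving that diagram~(\ref{eq:1f}) commutes.
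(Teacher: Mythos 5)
Your proposal is correct and follows essentially the same route as the paper's proof: reduce to the universal property of the free Rota-Baxter algebra on $A^{\NN}$ after checking both legs are Rota-Baxter homomorphisms, with the only nontrivial verification being that $\delta_{\sha(A)}$ commutes with the extended operators, handled via Lemma~\ref{lem:head-tail} exactly as the paper does, and then the same chain of identities using Eq.~(\ref{eq:c0}) and the naturalities of $\eta$ and $\delta$. No gaps.
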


\begin{proof}
We know that $\beta_{A}$ is a Rota-Baxter algebra homomorphism, i.e.,
\begin{equation}
\beta_{A}\circ P_{A^{\NN}}= \widetilde{P_{A}}\circ\beta_{A}.
\mlabel{eq:1f1}
\end{equation}
Then from Lemma~\ref{lem:Nmorph} we see that $(\beta_{A})^{\NN}$ is also a Rota-Baxter algebra homomorphism.

We next verify that $\delta_{\tsha(A)} : (\sha(A))^{\NN} \rightarrow ((\sha(A))^{\NN})^{\NN}$ is a Rota-Baxter algebra homomorphism.
This means we need to verify that $$\widetilde{(\widetilde{P_A})} \circ \delta_{\ssha(A)} = \delta_{\ssha(A)} \circ \widetilde{P_A}.$$
Once again we make use of Lemma~\ref{lem:head-tail}.  So we observe that
$$\partial_{(\ssha(A))^{\NN}} \circ \widetilde{(\widetilde{P_A})} \circ \delta_{\ssha(A)} =
{\rm id}_{((\ssha(A))^{\NN})^{\NN}} \circ \delta_{\ssha(A)} =
\delta_{\ssha(A)} \circ {\rm id}_{(\ssha(A))^{\NN}} =
\delta_{\ssha(A)} \circ \partial_{\ssha(A)} \circ \widetilde{P_A} =
 \partial_{(\ssha(A))^{\NN}} \circ \delta_{\ssha(A)} \circ \widetilde{P_A}.$$

In a similar way we can also show
$$\varepsilon_{(\ssha(A))^{\NN}} \circ \widetilde{(\widetilde{P_A})} \circ \delta_{\ssha(A)} = \varepsilon_{(\ssha(A))^{\NN}} \circ \delta_{\ssha(A)} \circ \widetilde{P_A}.$$
To see this, first note that $\varepsilon_{(\tsha(A))^{\NN}} \circ \widetilde{\widetilde{P_A}} = \widetilde{P_A} \circ \varepsilon_{(\tsha(A))^{\NN}}$ by Proposition~\ref{pp:rbext}.  Then $$\varepsilon_{(\ssha(A))^{\NN}} \circ \widetilde{(\widetilde{P_A})} \circ \delta_{\ssha(A)} =
\widetilde{P_A} \circ \varepsilon_{(\tsha(A))^{\NN}} \circ \delta_{\ssha(A)} =
\widetilde{P_A} \circ {\rm id}_{(\tsha(A))^{\NN}}  =
{\rm id}_{(\tsha(A))^{\NN}} \circ \widetilde{P_A}  =
\varepsilon_{(\ssha(A))^{\NN}} \circ \delta_{\ssha(A)} \circ \widetilde{P_A}.$$
Therefore, $\delta_{\tsha(A)} : (\sha(A))^{\NN} \rightarrow ((\sha(A))^{\NN})^{\NN}$ is a Rota-Baxter algebra homomorphism.
Consequently we see that $\delta_{\tsha(A)}\circ\beta_{A}$ and $ (\beta_{A})^{\NN}\circ\beta_{A_{\NN}}\circ \sha(\delta_{A} )$ are Rota-Baxter algebra homomorphisms.

By the universal property of a free
Rota-Baxter algebra $\sha(A^{\NN})$ on $A^{\NN}$,
to prove the commutativity of the diagram (\mref{eq:1f}), namely,
$$\delta_{\tsha(A)}\circ\beta_{A}= (\beta_{A})^{\NN}\circ\beta_{A^{\NN}}\circ \sha(\delta_{A}),$$
we only need to prove
$$(\delta_{\tsha(A)}\circ\beta_{A})\circ \eta_{A^{\NN}}= ((\beta_{A})^{\NN}\circ\beta_{A^{\NN}}\circ \sha(\delta_{A}))\circ \eta_{A^{\NN}}.$$

Now by the naturality of $\delta$, we have
\begin{equation}
\delta_{\tsha(A)}\circ (\eta_{A})^{\NN}=((\eta_{A})^{\NN})^{\NN}\circ \delta_{A}.
\mlabel{eq:1f4}
\end{equation}
By the naturality of $\eta$, we have
\begin{equation}
\eta_{(A^{\NN})^{\NN}}\circ \delta_{A}=\sha(\delta_{A})\circ \eta_{A^{\NN}}.
\mlabel{eq:1f5}
\end{equation}
Then
\begin{eqnarray*}
(\delta_{\tsha(A)}\circ\beta_{A})\circ \eta_{A^{\NN}}&=&\delta_{\tsha(A)}\circ(\beta_{A}\circ \eta_{A^{\NN}}) \\
&=&\delta_{\tsha(A)}\circ (\eta_{A})^{\NN} \quad \quad \text{(by Eq.~(\ref{eq:c0}))}\\
&=&((\eta_{A})^{\NN})^{\NN}\circ \delta_{A} \quad \quad \text{(by Eq.~(\ref{eq:1f4}))}\\
&=&(\beta_{A}\circ \eta_{A^{\NN}})^{\NN}\circ
\delta_{A} \quad \quad \text{(by Eq.~(\ref{eq:c0}))}\\
&=& (\beta_{A})^{\NN}\circ (\eta_{A^{\NN}})^{\NN}\circ
\delta_{A} \\
&=& (\beta_{A})^{\NN}\circ(\beta_{A^{\NN}}\circ \eta_{(A^{\NN})^{\NN}})\circ
\delta_{A} \quad \quad \text{(by Eq.~(\ref{eq:c0}))}\\
&=& (\beta_{A})^{\NN}\circ\beta_{A^{\NN}}\circ (\eta_{(A^{\NN})^{\NN}}\circ
\delta_{A}) \\
&=& (\beta_{A})^{\NN}\circ\beta_{A^{\NN}}\circ (\sha(\delta_{A})\circ
\eta_{A^{\NN}}) \quad \quad \text{(by Eq.~(\ref{eq:1f5}))}\\
&=& ((\beta_{A})^{\NN}\circ\beta_{A^{\NN}}\circ \sha(\delta_{A}))\circ \eta_{A^{\NN}}.
\end{eqnarray*}
This is what we want.
\end{proof}

\begin{lemma}
The diagram
\begin{equation}
\begin{array}{c}
\xymatrix{
\sha(\sha(A^{\NN}))\ar[r]^{\ssha(\beta_{A})} \ar[d]_{\mu_{A^{\NN}}}
&\sha((\sha(A))^{\NN}) \ar[r]^{\beta_{\tsha(A)}}&(\sha(\sha(A)))^{\NN} \ar[d]^{(\mu_{A})^{\NN}}\\
\sha(A^{\NN})\ar[rr]_{\beta_{A}}&& (\sha(A))^{\NN}
}
\end{array}
\mlabel{eq:dist3}
\end{equation}
commutes.
\mlabel{lem:dist3}
\mlabel{lem:2fang}
\end{lemma}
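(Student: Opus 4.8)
The plan is to establish the commutativity of diagram~(\ref{eq:dist3}) by the same free-object technique that handled Lemmas~\ref{lem:23jiao} and~\ref{lem:1fang}. The domain $\sha(\sha(A^{\NN}))$ is the free Rota-Baxter algebra on the algebra $\sha(A^{\NN})$, carrying the operator $P_{\sha(A^{\NN})}$, so by the universal property in Theorem~\ref{thm:shua} any two Rota-Baxter algebra homomorphisms out of it that agree after precomposition with the canonical embedding $\eta_{\sha(A^{\NN})}\colon \sha(A^{\NN})\to\sha(\sha(A^{\NN}))$ must coincide. I would therefore first argue that both legs of the diagram are Rota-Baxter algebra homomorphisms $(\sha(\sha(A^{\NN})),P_{\sha(A^{\NN})})\to((\sha(A))^{\NN},\widetilde{P_A})$, and then reduce the claimed identity to an identity of algebra homomorphisms out of $\sha(A^{\NN})$.

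For the lower-left leg, $\mu_{A^{\NN}}$ is $U$ applied to the counit component $\varepsilon_{F(A^{\NN})}$, hence a Rota-Baxter algebra homomorphism, and $\beta_A$ is one by Lemma~\ref{lem:beta}; so $\beta_A\circ\mu_{A^{\NN}}$ is a Rota-Baxter algebra homomorphism. For the upper-right leg, $\sha(\beta_A)=F(\beta_A)$ is a Rota-Baxter algebra homomorphism by Theorem~\ref{thm:shua}, $\beta_{\sha(A)}$ is one by Lemma~\ref{lem:beta}, and, since $\mu_A$ is itself a Rota-Baxter algebra homomorphism, Lemma~\ref{lem:Nmorph} shows $(\mu_A)^{\NN}$ is a Rota-Baxter algebra homomorphism carrying $\widetilde{P_{\sha(A)}}$ to $\widetilde{P_A}$; the composite is thus again a Rota-Baxter algebra homomorphism, with the same source and target operators as the first leg.

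It then remains to verify equality after precomposing with $\eta_{\sha(A^{\NN})}$. On the lower-left leg the monad unit law $\mu\circ\eta T=\id$ gives $\mu_{A^{\NN}}\circ\eta_{\sha(A^{\NN})}=\id$, so the leg collapses to $\beta_A$. On the upper-right leg I would push $\eta$ through $\sha(\beta_A)$ by naturality of $\eta$, rewrite $\beta_{\sha(A)}\circ\eta_{(\sha(A))^{\NN}}$ as $(\eta_{\sha(A)})^{\NN}$ using Eq.~(\ref{eq:c0}) at the object $\sha(A)$, and then use functoriality of $(-)^{\NN}$ together with the same unit law to collapse $(\mu_A\circ\eta_{\sha(A)})^{\NN}=(\id_{\sha(A)})^{\NN}$; this also yields $\beta_A$. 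Since both legs agree on $\eta_{\sha(A^{\NN})}$, the universal property finishes the proof.

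I expect the main obstacle to be bookkeeping rather than any deep step: the real care point is confirming that $(\mu_A)^{\NN}$ is a Rota-Baxter algebra homomorphism, which rests on recognizing $\mu_A=U(\varepsilon_{F(A)})$ as a Rota-Baxter algebra homomorphism so that Lemma~\ref{lem:Nmorph} applies, and on keeping straight which induced Rota-Baxter operator sits on each intermediate object---$P_{\sha(A^{\NN})}$, the free operator $P_{(\sha(A))^{\NN}}$, $\widetilde{P_{\sha(A)}}$, and $\widetilde{P_A}$---so that the two legs are verified to share a common source and target. Once the homomorphism claims are in place, the equality is a short chase driven entirely by naturality of $\eta$, Eq.~(\ref{eq:c0}), and the unit law.
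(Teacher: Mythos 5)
Your proposal is correct and follows essentially the same route as the paper's proof: both legs are shown to be Rota-Baxter algebra homomorphisms out of the free Rota-Baxter algebra on $\sha(A^{\NN})$ (with $(\mu_A)^{\NN}$ handled via Lemma~\ref{lem:Nmorph} after recognizing $\mu_A$ as a Rota-Baxter homomorphism), and the identity is then reduced by the universal property to a short chase using naturality of $\eta$, Eq.~(\ref{eq:c0}) at $\sha(A)$, and the unit law $\mu_A\circ\eta_{\sha(A)}=\id_{\sha(A)}$. The only cosmetic difference is that you phrase $\mu_A$ as $U(\varepsilon_{F(A)})$ while the paper describes it as induced from $\id_{\sha(A)}$ by the universal property; these are the same observation.
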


\begin{proof}
By its construction, $\mu_{A}$ is the Rota-Baxter algebra homomorphism induced from the identity homomorphism $\id_{\ssha(A)}$ by the universal property of the free Rota-Baxter algebra $\sha(\sha(A))$ on $\sha(A)$.
Then $(\mu_{A})^{\NN}$ is a Rota-Baxter algebra homomorphism.
Then $\beta_{A}\circ\mu_{A^{\NN}}$ and $(\mu_{A})^{\NN}\circ\beta_{\tsha(A)}\circ \sha(\beta_{A})$ are
Rota-Baxter algebra homomorphisms.

By the uniqueness of the universal property of a free
Rota-Baxter algebra on $\sha(A^{\NN})$, in order to prove the commutativity of the diagram~(\mref{eq:dist3}), we only need to prove  \begin{equation}
( \mu_{A})^{\NN}\circ\beta_{\tsha(A)}\circ \sha(\beta_{A})=\beta_{A}\circ \mu_{A^{\NN}}
\mlabel{eq:dist3'}
\end{equation}

By the naturality of $\eta$, we have
\begin{equation}
\sha(\beta_{A})\circ \eta_{\tsha(A^{\NN})}=\eta_{(\tsha(A))^{\NN}}\circ\beta_{A}.
\mlabel{eq:2f4}
\end{equation}
By Eq.~(\ref{eq:Tcate}), we have
\begin{equation}
\mu_{A}\circ \eta_{\tsha(A)}=\id_{\tsha(A)}.
\mlabel{eq:2f5}
\end{equation}
Then Eq.~(\mref{eq:dist3'}) follows since
\begin{eqnarray*}
( (\mu_{A})^{\NN}\circ\beta_{\tsha(A)}\circ \sha(\beta_{A}))\circ \eta_{\tsha(A^{\NN})}
&=&(\mu_{A})^{\NN}\circ\beta_{\tsha(A)}\circ ( \sha(\beta_{A})\circ \eta_{\tsha(A^{\NN})})\\
&=&(\mu_{A})^{\NN}\circ\beta_{\tsha(A)}\circ
(\eta_{(\tsha(A))^{\NN}}\circ\beta_{A}) \quad \quad \text{(by Eq.~(\ref{eq:2f4})}\\
&=&(\mu_{A})^{\NN}\circ(\beta_{\tsha(A)}\circ
\eta_{(\tsha(A))^{\NN}})\circ\beta_{A} \\
&=& ( \mu_{A})^{\NN}\circ  (\eta_{\tsha(A)})^{\NN}\circ\beta_{A}
\quad \quad \text{(by Eq.~(\ref{eq:c0}))}\\
&=& ( \mu_{A} \circ  \eta_{\tsha(A)})^{\NN}\circ\beta_{A}\\
&=&(\id_{\tsha(A)})^{\NN}\circ\beta_{A} \quad \quad \text{(by Eq.~(\ref{eq:2f5}))}\\
&=&\beta_{A}\circ \id_{\tsha(A^{\NN})}\\
&=&\beta_{A}\circ (\mu_{A^{\NN}}\circ \eta_{\tsha(A^{\NN})})
\quad \quad \text{(by Eq.~(\ref{eq:2f5}))}\\
&=&(\beta_{A}\circ\mu_{A^{\NN}})\circ \eta_{\tsha(A^{\NN})}.
\end{eqnarray*}
\end{proof}

This completes the proof of Theorem~\mref{thm:cddist}.

\section{Differential Rota-Baxter algebras}
\mlabel{sec:diffrba}
In this section we give some applications of the mixed distributive law to differential Rota-Baxter algebras. We first consider the comonad on Rota-Baxter algebras giving differential Rota-Baxter algebras. We then consider the monad on differential algebra giving differential Rota-Baxter algebras. The main results are summarized in Diagram~(\mref{eq:diag}).

\begin{defn}
We say that $(R,d,P)$ is a
{\bf differential Rota-Baxter algebra of weight $\lambda$} if
\begin{enumerate}
\item $(R,d)$ is a differential algebra of weight $\lambda$,
\item $(R,P)$ is a Rota-Baxter algebra of weight $\lambda$, and
\item $d \circ P = \id_R$.
\end{enumerate}
\end{defn}

If $(R,d,P)$ and $(R',d',P')$ are differential Rota-Baxter algebras of weight $\lambda$, then a morphism of differential Rota-Baxter algebras $f:(R,d,P) \rightarrow (R',d',P')$ is an algebra homomorphism $f: R \rightarrow R'$ such that $d'(f(x)) = f(d(x))$ and $P'(f(x)) = f(P(x))$ for all $x \in R$.  The category of differential Rota-Baxter algebras of weight $\lambda$ will be denoted by $\DRB_{\bfk,\lambda}$ or simply by $\DRB$.

It is clear that there are forgetful functors $U': \DRB \rightarrow \DIF$ and $V': \DRB \rightarrow \RBA$, and that $U  V' = V  U'$.
We will show that $\DRB \cong (\ALG_{\C})^{\widetilde{\T}} \cong (\ALG^{\T})_{\widetilde{\C}}$, where $\widetilde{\T}$ and $\widetilde{\C}$ come from Corollary~\ref{coro:lifting}.
This will ``automatically'' give the lifting of the adjoints.

The comonad $\widetilde{\C}$ on the category $\ALG^{\T}$ of $\T$-algebras is $\widetilde{\C} = \langle \widetilde{C},\widetilde{\varepsilon}, \widetilde{\delta} \rangle$, where $\widetilde{C}: \ALG^{\T} \rightarrow \ALG^{\T}$ is a functor and $\widetilde{\varepsilon}: \widetilde{C} \rightarrow \id_{\ALG^{\T}}$ and $\widetilde{\delta}: \widetilde{C} \rightarrow \widetilde{C} \circ \widetilde{C}$ are natural transformations satisfying the usual comonad equations.
Here $\widetilde{C}: \ALG^{\T} \rightarrow \ALG^{\T}$ is given by $$\widetilde{C}\langle A,h \rangle = \langle C(A), C(h) \circ \beta_A \rangle = \langle A^{\NN}, h^{\NN} \circ \beta_A \rangle $$ for any $\langle A,h \rangle \in \ALG^{\T}$.
The equations needed to show that $h^{\NN} \circ \beta_A $ is a $\T$-structure on $A^{\NN}$ follow from the defining equations for $\beta$ being a mixed distributive law.
In a similar way, the natural transformations $\widetilde{\varepsilon}$ and $\widetilde{\delta}$ are defined by $\widetilde{\varepsilon}_{\langle A,h \rangle} = \varepsilon_A$ and $\widetilde{\delta}_{\langle A,h \rangle} = \delta_A$, and the equations to show that these are morphisms in $\ALG^{\T}$ follow from the defining equations for $\beta$.

On the other hand, the monad $\widetilde{\T} = \langle \widetilde{T}, \widetilde{\eta}, \widetilde{\mu} \rangle $ on $\ALG_{\C}$ has the functor $\widetilde{T} : \ALG_{\C} \rightarrow \ALG_{\C}$ given by $$\widetilde{T} \langle A, f\rangle = \langle T(A), \beta_A \circ T(f) \rangle = \langle \sha(A), \beta_A \circ \sha(f) \rangle $$ for any $\langle A, f\rangle \in \ALG_{\C}$.
Once again, the equations needed to show that $\beta_A \circ \sha(f)$ is a $\C$-costructure on $\sha(A)$ come from the defining equations for $\beta$.
The natural transformations $\widetilde{\eta}$ and $\widetilde{\mu}$ are defined in the obvious way as for $\widetilde{\varepsilon}$ and $\widetilde{\delta}$, and the defining equations for $\beta$ come into play here, too.

Now from Corollary~\ref{coro:lifting} we have the isomorphism of categories $(\ALG_{\C})^{\widetilde{\T}} \cong (\ALG^{\T})_{\widetilde{\C}}$.
First we will look at the objects in each of the categories $(\ALG_{\C})^{\widetilde{\T}}$ and $(\ALG^{\T})_{\widetilde{\C}}$, and we will then see exactly what is the isomorphism.
Then we will examine how $\DRB$ is isomorphic to both categories, and the lifting of the adjoint functors on $\ALG$ giving $\T$ and $\C$ will be clear as well.

The objects in $(\ALG^{\T})_{\widetilde{\C}}$ are pairs $\langle \langle A, h \rangle , f \rangle$, where $\langle A, h \rangle \in \ALG^{\T}$ and $f : \langle A, h \rangle \rightarrow \widetilde{C}\langle A, h \rangle = \langle A^{\NN}, h^{\NN} \circ \beta_A \rangle $ is a $\widetilde{\C}$-costructure on $\langle A, h \rangle$.
What this means is that $f : A \rightarrow A^{\NN}$ is an algebra homomorphism such that $\varepsilon_A \circ f = {\rm id_A}$ and $\delta_A\circ f = f^{\NN}\circ f$ and the following diagram commutes:

\begin{equation}
\begin{array}{c}
\xymatrix{
\sha(A)\ar[rr]^{h} \ar[d]_{\ssha(f)} && A \ar[d]^{f}\\
\sha(A^{\NN})\ar[r]_{\beta_{A}}&
\sha(A^{\NN}) \ar[r]_{h^{\NN} }& A^{\NN}
}
\end{array}
\mlabel{eq:(ALG^T)_C}
\end{equation}

The morphisms in $(\ALG^{\T})_{\widetilde{\C}}$ are $g : \langle \langle A, h \rangle , f \rangle \rightarrow \langle \langle A', h' \rangle , f' \rangle $ where $g : A \rightarrow  A' $ is an algebra homomorphism such that the following diagrams commute:

\[
\begin{array}{c}
\xymatrix{
\sha(A)\ar[r]^(.5){h} \ar[d]_{\ssha(g)} & A \ar[d]^{g}\\
\sha(A')\ar[r]_(.5){h'}& A'
}
\end{array}
\quad
\text{ and } \quad
\begin{array}{c}
\xymatrix{
A \ar[r]^(.5){f} \ar[d]_{g} & A^{\NN} \ar[d]^{g^{\NN}}\\
A' \ar[r]_(.5){f'}&(A')^{\NN}
}
\end{array}
\]

The objects in the category $(\ALG_{\C})^{\widetilde{\T}}$ are of the form $\langle \langle A, f \rangle , h \rangle$, where $\langle A, f \rangle \in \ALG_{\C}$ and $h :\widetilde{T} \langle A, f \rangle  = \langle \sha(A), \beta_A \circ \sha(f) \rangle \rightarrow \langle A, f \rangle $ is a $\widetilde{\T}$-structure on $\langle A, f \rangle$.
In this case this means that $h : \sha(A) \rightarrow A$ is an algebra homomorphism satisfying $h \circ \eta_A = {\rm id_A}$ and $h \circ \mu_A = h \circ \sha(h)$ and such that the following diagram commutes:

\begin{equation}
\begin{array}{c}
\xymatrix{
\sha(A)\ar[r]^{\ssha(f)} \ar[d]_{h}
&\sha(A^{\NN}) \ar[r]^{\beta_A}&(\sha(A))^{\NN} \ar[d]^{h^{\NN}}\\
A \ar[rr]_{f}&& A^{\NN}
}
\end{array}
\mlabel{eq:(ALG_C)^T}
\end{equation}

The morphisms in $(\ALG_{\C})^{\widetilde{\T}}$ are defined in a similar way.
But it is clear that diagram~(\ref{eq:(ALG^T)_C}) is identical to diagram~(\ref{eq:(ALG_C)^T}), and hence it is clear how $(\ALG^{\T})_{\widetilde{\C}}$ is isomorphic to $(\ALG_{\C})^{\widetilde{\T}}$.
The isomorphism $\Phi :   (\ALG^{\T})_{\widetilde{\C}} \rightarrow (\ALG_{\C})^{\widetilde{\T}}$ in Corollary~\mref{coro:lifting} is given on objects $\langle \langle A, h \rangle , f \rangle \in (\ALG^{\T})_{\widetilde{\C}}$ by $\Phi \langle \langle A, h \rangle , f \rangle = \langle \langle A, f \rangle , h \rangle $.

Next we'll show how $\DRB \cong (\ALG_{\C})^{\widetilde{\T}} \cong (\ALG^{\T})_{\widetilde{\C}}$ and this will give the lifting of the adjoints.

Let $A^{\NN}, \partial_A$ and $\widetilde{P}$ be as in Proposition~\mref{pp:rbext}. Since $\partial_A\circ \widetilde{P}=\id_{A^{\NN}}$ as stated there, the triple $(A^{\NN},\partial_A,\widetilde{P})$ is a differential Rota-Baxter algebra. Thus we have a functor
$G':\RBA \rightarrow \DRB$ given on objects
$(A,P) \in \RBA$ by $G'(A,P) = (A^{\NN}, \partial_A,\widetilde{P})$ and on morphisms
$\varphi:(A, P) \rightarrow (A', P')$ in $\RBA$ by $(G'(\varphi)(f))(n) = \varphi(f(n))$ for
$f\in A^{\NN}$ and $n\in \NN$.  Recall that  $V':\DRB \rightarrow \RBA$ denotes the forgetful
functor defined on objects $(R,d,P) \in \DRB$ by $V'(R,d,P) = (R,P)$ and
on morphisms $\phi:(R,d,P) \rightarrow (R',d',P')$ in $\DRB$
by $V'(\phi) = \phi$.

\begin{prop}
The functor $G':\RBA \rightarrow \DRB$ defined above is the right
adjoint of the forgetful functor $V':\DRB \rightarrow \RBA$.
\mlabel{pp:adjDRB}
\end{prop}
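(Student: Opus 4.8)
The plan is to lift the cofree differential adjunction $\langle V,G,\eta,\varepsilon\rangle:\DIF\rightharpoonup\ALG$ of Proposition~\ref{prop:cofree} along the Rota-Baxter structure, using the extension $\widetilde{Q}$ and its compatibilities from Proposition~\ref{pp:rbext}. For a Rota-Baxter algebra $(A,Q)$ I take as counit the evaluation map $\varepsilon_A:A^{\NN}\to A$, $\varepsilon_A(f)=f(0)$. Since $V'(G'(A,Q))=(A^{\NN},\widetilde{Q})$, Proposition~\ref{pp:rbext} tells us $\varepsilon_A\circ\widetilde{Q}=Q\circ\varepsilon_A$, so $\varepsilon_A:(A^{\NN},\widetilde{Q})\to(A,Q)$ is a morphism in $\RBA$. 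To establish $V'\dashv G'$ it then suffices to verify the universal property of this counit: for every $(R,d,P)\in\DRB$ and every $\RBA$-morphism $\varphi:(R,P)\to(A,Q)$, there is a unique $\DRB$-morphism $\tilde{\varphi}:(R,d,P)\to(A^{\NN},\partial_A,\widetilde{Q})$ with $\varepsilon_A\circ\tilde{\varphi}=\varphi$; naturality of the resulting bijection $\Hom_{\RBA}(V'(R,d,P),(A,Q))\cong\Hom_{\DRB}((R,d,P),G'(A,Q))$ will then be inherited from the naturality of the $V\dashv G$ adjunction.

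First I would produce the candidate. Viewing $\varphi$ merely as an algebra homomorphism $R\to A$ and using $(R,d)\in\DIF$, the adjunction $V\dashv G$ supplies a unique $\lambda$-differential algebra homomorphism $\tilde{\varphi}=\varphi^{\NN}\circ\eta_{(R,d)}:(R,d)\to(A^{\NN},\partial_A)$ satisfying $\varepsilon_A\circ\tilde{\varphi}=\varphi$, explicitly $\tilde{\varphi}(x)(n)=\varphi(d^{(n)}(x))$. This is the only possible lift, because any $\DRB$-morphism over $\varphi$ is in particular such a differential homomorphism; thus both existence and uniqueness reduce to showing that this single $\tilde{\varphi}$ is compatible with the Rota-Baxter operators, i.e.\ that $\tilde{\varphi}\circ P=\widetilde{Q}\circ\tilde{\varphi}$.

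The heart of the argument is therefore this last identity, which I would verify by Lemma~\ref{lem:head-tail}, checking it after composing with $\varepsilon_A$ and with $\partial_A$. For the ``head'', $\varepsilon_A\circ\widetilde{Q}=Q\circ\varepsilon_A$ and $\varepsilon_A\circ\tilde{\varphi}=\varphi$ reduce both composites to $\varphi\circ P=Q\circ\varphi$, which holds since $\varphi$ is a Rota-Baxter homomorphism. For the ``tail'', $\partial_A\circ\widetilde{Q}=\id_{A^{\NN}}$ collapses the right-hand side to $\tilde{\varphi}$, while on the left the fact that $\tilde{\varphi}$ is a differential homomorphism gives $\partial_A\circ\tilde{\varphi}=\tilde{\varphi}\circ d$, so that $\partial_A\circ\tilde{\varphi}\circ P=\tilde{\varphi}\circ d\circ P=\tilde{\varphi}$ by the defining relation $d\circ P=\id_R$ of a differential Rota-Baxter algebra. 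Lemma~\ref{lem:head-tail} then yields $\tilde{\varphi}\circ P=\widetilde{Q}\circ\tilde{\varphi}$, so $\tilde{\varphi}$ is simultaneously a differential and a Rota-Baxter homomorphism, hence a morphism in $\DRB$, completing the universal property. The only genuinely new point, and the one place where the differential Rota-Baxter axiom is used, is the tail computation invoking $d\circ P=\id_R$; everything else is a transport of the cofree differential adjunction of Proposition~\ref{prop:cofree} through the head--tail description of $A^{\NN}$.
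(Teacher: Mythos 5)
Your proof is correct, but it is organized along a genuinely different route from the paper's. The paper establishes the adjunction by exhibiting both the unit $\eta'$ and the counit $\varepsilon'$ explicitly and then verifying naturality together with the two triangle identities $G'\varepsilon'\circ\eta' G'=G'$ and $\varepsilon' V'\circ V'\eta'=V'$ by direct computation; in particular it re-checks that $\eta'_{(R,d,P)}(x)(n)=d^{(n)}(x)$ is multiplicative by invoking Proposition~\ref{prop:Leibnitz}, and it asserts the compatibility $\widetilde{P}\circ\eta'_{(R,d,P)}=\eta'_{(R,d,P)}\circ P$ as ``clear''. You instead use the universal-arrow characterization of a right adjoint: you show that each $\varepsilon_A:(A^{\NN},\widetilde{Q})\to(A,Q)$ is universal from $V'$ to $(A,Q)$. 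This lets you import the existence, uniqueness, and multiplicativity of the lift $\tilde{\varphi}=\varphi^{\NN}\circ\eta_{(R,d)}$ wholesale from the already-established adjunction $V\dashv G$ of Proposition~\ref{prop:cofree}, together with naturality of the hom-set bijection, so that the only genuinely new verification is $\widetilde{Q}\circ\tilde{\varphi}=\tilde{\varphi}\circ P$; you settle this cleanly with Lemma~\ref{lem:head-tail}, the identities $\varepsilon_A\circ\widetilde{Q}=Q\circ\varepsilon_A$ and $\partial_A\circ\widetilde{Q}=\id_{A^{\NN}}$ from Proposition~\ref{pp:rbext}, and the defining axiom $d\circ P=\id_R$. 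Your route is more economical and in fact supplies an honest proof of the step the paper leaves as ``clear'' (your check is exactly that step in the special case $\varphi=\id$); the paper's route has the compensating advantage of producing $\eta'$ and $\varepsilon'$ as explicit natural transformations, which it uses immediately afterwards to build the comonad $\C'=\langle C',\varepsilon',\delta'\rangle$ on $\RBA$.
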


\begin{proof}
By~\cite{Ma}, it is equivalent to show that there are two natural
transformations $\eta': \id_{\DRB} \rightarrow G' \circ V'$ and
$\varepsilon': V' \circ G' \rightarrow \id_{\RBA}$ satisfying the equations
$$G'\varepsilon' \circ \eta' G' = G', \quad \varepsilon' V' \circ V'\eta' = V'.$$

For any $(A,P) \in \RBA$, define
$$\varepsilon'_{(A,P)}: V'G'(A,P) = (A^{\NN},\widetilde{P}) \rightarrow A, \quad \varepsilon'_{(A,P)}(f) = f(0) = \varepsilon_A(f) \quad \text{ for all }f \in A^{\NN}.$$  Because $\varepsilon_A(\widetilde{P}(f)) = P(\varepsilon_A(f))$
we can easily get that
$\varepsilon'_{(A,P)}$ is a morphism in $\RBA$.
Further if
$\phi:(A,P) \rightarrow (A',P')$ is any morphism of Rota-Baxter algebras, then
$$\varepsilon'_{(A',P')} \circ V'G'(\phi) = \phi \circ \varepsilon'_{(A,P)},$$ i.e., $\varepsilon'$
is a natural transformation as desired.

For any $(R,d,P) \in \DRB$, define
$$\eta'_{(R,d,P)}:(R,d,P) \rightarrow (R^{\NN}, \partial_R,\widetilde{P}), \quad (\eta'_{(R,d,P)}(x))(n) = d^{(n)}(x) = (\eta_{(R,d)}(x))(n) \quad \text{for all } x \in R, n \in \NN.$$
It is not difficult to see
that $\eta'_{(R,d,P)}$ is $\bfk$-linear, and it is immediate from
Proposition~\ref{prop:Leibnitz} that for any $x, y \in R$,
$$(\eta'_{(R,d,P)}(x))(\eta'_{(R,d,P)}(y)) = \eta'_{(R,d,P)}(xy).$$
Also, it is clear that
$$\partial_R \circ \eta'_{(R,d,P)} =
\eta'_{(R,d,P)} \circ d, \quad \widetilde{P}\circ\eta'_{(R,d,P)}=\eta'_{(R,d,P)}\circ P,$$
so that $\eta'_{(R,d,P)}$ is a morphism in $\DRB$.
Furthermore, if $\varphi:(R, d, P) \rightarrow (R',d',P')$ is a morphism in $\DRB$,
then one sees that $\eta'_{(R',d',P')} \circ \varphi = (G'V')\varphi \circ \eta'_{(R,d,P)}.$
Hence $\eta'$ is a natural transformation.

To see that $G'\varepsilon' \circ \eta' G' = G'$, let $(A,P)\in \RBA$,
$f \in R^{\NN}$ and $n \in \NN$.  Then
\begin{eqnarray*}
(G'\varepsilon'_{(R,B)}(\eta'_{(R^{\NN},\partial_R,\widetilde{P})}(f)))(n) &=&
\varepsilon'_{(R,B)}(\eta'_{(R^{\NN},\partial_R,\widetilde{P})}(f)(n)) =
\varepsilon'_{(R,B)}(\partial_{R}^{(n)}(f)) \\
&=&(\partial_{R}^{(n)}(f))(0) =
f(0+n) = f(n).
\end{eqnarray*}

Similarly, to see that $\varepsilon' V' \circ V'\eta' = V'$,
let $(R,d,P) \in \DRB$, and $x \in R$.  Then
$$\varepsilon'_{(R,P)}(\eta'_{(R,d,P)}(x)) =
(\eta'_{(R,d,P)}(x))(0) = d^{(0)}(x) = x.$$
\end{proof}

Proposition~\ref{pp:adjDRB}
gives an adjunction
$$\langle V',G',\eta',\varepsilon'\rangle:\DRB\rightharpoonup\RBA.$$
Corresponding to the adjunction,
there is a comonad $\C' = \langle C',\varepsilon',\delta' \rangle$ on the category
$\RBA$, where
$$C' := V' G':\RBA \rightarrow \RBA$$
is the functor
whose value for any $(A,P) \in \RBA$ is $C'(A,P) = (A^{\NN}, \widetilde{P})$ and $\delta'$ is a natural transformation from $C'$ to $C' C'$ defined by $\delta' := V'\eta' G'$. In other words, for any $(A,P) \in \RBA$, $$\delta'_{(A,P)}:(A^\NN,\widetilde{P}) \rightarrow ((A^{\NN})^{\NN}, \widetilde{\widetilde{P}})\quad \delta'_{(A,P)}(f) =  \delta_A(f), \quad f \in A^{\NN}.$$
Consequently, there is a category
of $\C'$-coalgebras, denoted by $\RBA_{\C'}$.  The objects in
$\RBA_{\C'}$ are pairs $\langle (A,P),f\rangle$ where $(A,P) \in \RBA$ and
$f:(A,P) \rightarrow (A^{\NN}, \widetilde{P})$ is a morphism in $\RBA$ satisfying the two properties
$$\varepsilon'_{(A,P)} \circ f= \id_{(A,P)}, \quad \delta'_{(A,P)} \circ f= f^{\NN} \circ f.$$
A morphism $\varphi:\langle (A,P),f\rangle\rightarrow\langle (B,Q),g\rangle$ in $\RBA_{\C'}$ is an algebra homomorphism $\varphi: A \rightarrow B$ such that $g \circ \varphi = \varphi^{\NN} \circ f$ and $Q \circ \varphi = \varphi \circ P$.

The comonad $\C'$ also induces an adjunction
$$\langle V'_{\C'}, G'_{\C'}, \eta'_{\C'}, \varepsilon'_{\C'} \rangle: \RBA_{\C'}
\rightharpoonup \RBA,$$
where
$$V'_{\C'} : \RBA_{\C'} \rightarrow \RBA$$ is given on objects
by $V'_{\C'}\langle (A,P),f\rangle = (A,P)$ and on morphisms
$\varphi:\langle (A,P),f\rangle\rightarrow\langle (B,Q),g\rangle$ in $\RBA_{\C'}$ by $V'_{\C'}(\varphi)=\varphi$.  The functor
$$G'_{\C'}:\RBA \rightarrow \RBA_{\C'}$$ is defined on objects
$(A,P) \in \RBA$ by $G'_{\C'}(A,P) = (A^{\NN}, \widetilde{P}),\delta_{(A,P)})$, and on morphisms
$\phi:(A,P)\rightarrow (B,Q)$ in $\RBA$ by $G'_{\C'}(\phi)= \phi^{\NN}$.  The natural transformations $\varepsilon'_{\C'}$ and
$\eta'_{\C'}$ are defined similarly to $\varepsilon'$ and $\eta'$.

Consequently there is a uniquely defined cocomparison functor
$H':\DRB \rightarrow \RBA_{\C'}$ such that $H' \circ G' = G'_{\C'}$ and $V'_{\C'} \circ H' = V'$.  Here $H'(R,d,P) = \langle (R,P),\tilde{d} \rangle $,
where for the $\lambda$-derivation
$d:R \rightarrow R$, the Rota-Baxter algebra homomorphism $\tilde{d}:R \rightarrow R^{\NN}$ is defined by
$\tilde{d} = V'(\eta'_{(R,d,P)})$.
Hence, for any
$a \in R$ and $n \in \NN$, $(\tilde{d}(a))(n) = d^{(n)}(a)$.

\begin{theorem}
The cocomparison functor $H':\DRB \rightarrow \RBA_{\C'}$ is
an isomorphism, i.e., $\DRB$ is comonadic over $\RBA$.
\mlabel{thm:Rcomonadic}
\end{theorem}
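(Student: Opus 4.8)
The plan is to prove comonadicity directly via the dual of Beck's theorem~\cite{Ma}, in complete parallel with the proof of Theorem~\ref{thm:rbmon}. By Proposition~\ref{pp:adjDRB} the forgetful functor $V':\DRB\to\RBA$ already has a right adjoint $G'$, so by the dual criterion it suffices to show that $V'$ creates equalizers for every parallel pair $f,g:(R,d,P)\to(R',d',P')$ in $\DRB$ whose image $V'(f),V'(g):(R,P)\to(R',P')$ admits a split equalizer in $\RBA$. Such a split equalizer consists of Rota-Baxter homomorphisms $m:(\overline{R},\overline{P})\to(R,P)$, $s:(R,P)\to(\overline{R},\overline{P})$ and $t:(R',P')\to(R,P)$ satisfying $V'(f)\circ m=V'(g)\circ m$, $s\circ m=\id_{\overline{R}}$, $t\circ V'(f)=\id_R$ and $m\circ s=t\circ V'(g)$.

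Dually to the operator $\overline{P}=e\circ P'\circ s$ introduced in Theorem~\ref{thm:rbmon}, I would equip $\overline{R}$ with the candidate derivation
\[\overline{d}:=s\circ d\circ m:\overline{R}\to\overline{R}.\]
Since $m$ and $s$ are algebra homomorphisms with $s\circ m=\id_{\overline{R}}$, applying the weight-$\lambda$ Leibniz rule for $d$ on $R$ and then $s$ yields $\overline{d}(xy)=\overline{d}(x)y+x\overline{d}(y)+\lambda\overline{d}(x)\overline{d}(y)$ and $\overline{d}(\bfone_{\overline{R}})=s(d(\bfone_R))=0$, so $\overline{d}$ is a $\lambda$-derivation. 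Using next that $m$ intertwines the Rota-Baxter operators, that is $m\circ\overline{P}=P\circ m$, together with $d\circ P=\id_R$, one gets $\overline{d}\circ\overline{P}=s\circ d\circ P\circ m=s\circ m=\id_{\overline{R}}$, so that $(\overline{R},\overline{d},\overline{P})$ is a differential Rota-Baxter algebra.

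Third, I would check that $m$ lifts to a morphism of $\DRB$, i.e. $m\circ\overline{d}=d\circ m$. Here one uses the full split-equalizer data together with the fact that $f$ and $g$ intertwine the derivations: from $m\circ\overline{d}=(m\circ s)\circ d\circ m=t\circ V'(g)\circ d\circ m$ one pushes $d$ past $g$ and then past $f$ via $g\circ d=d'\circ g$ and $f\circ d=d'\circ f$, applies the fork identity $V'(f)\circ m=V'(g)\circ m$, and finishes with $t\circ V'(f)=\id_R$ to obtain $d\circ m$. Since $s\circ m=\id_{\overline{R}}$ makes $m$ split monic, this equation determines $\overline{d}$ uniquely, so $V'$ genuinely creates (rather than merely lifts) the structure. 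Finally, to see that $m:(\overline{R},\overline{d},\overline{P})\to(R,d,P)$ is an equalizer in $\DRB$, given any $\DRB$-morphism $h:(S,d_S,P_S)\to(R,d,P)$ with $f\circ h=g\circ h$, the split equalizer in $\RBA$ supplies the unique Rota-Baxter factorization $\phi=s\circ V'(h)$ with $m\circ\phi=V'(h)$; a one-line chase using $m\circ\overline{d}=d\circ m$, $h\circ d_S=d\circ h$ and the monicity of $m$ shows $\phi\circ d_S=\overline{d}\circ\phi$, so $\phi$ lies in $\DRB$.

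I expect the only delicate point to be the third step, $m\circ\overline{d}=d\circ m$: this is precisely where all of the hypotheses—the two splitting identities, the fork identity, and the compatibility of $f,g$ with the derivations—must be combined in the correct order, whereas the other steps are routine dualizations of the corresponding parts of Theorem~\ref{thm:rbmon}. This argument has the advantage of establishing $\DRB\cong\RBA_{\C'}$ directly; the earlier-claimed identification $\DRB\cong(\ALG^{\T})_{\widetilde{\C}}$ from Corollary~\ref{coro:lifting} then follows by matching the comonad $\C'$ on $\RBA$ with the lifted comonad $\widetilde{\C}$ on $\ALG^{\T}\cong\RBA$.
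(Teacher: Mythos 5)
Your proposal is correct and follows exactly the route the paper intends: it invokes the dual of Beck's theorem and dualizes the proof of Theorem~\ref{thm:rbmon}, which is precisely what the paper's (omitted) proof of Theorem~\ref{thm:Rcomonadic} claims to do. The details you supply --- the candidate derivation $\overline{d}=s\circ d\circ m$, the verification of the weight-$\lambda$ Leibniz rule and of $\overline{d}\circ\overline{P}=\id_{\overline{R}}$, the chase establishing $m\circ\overline{d}=d\circ m$ via the split-equalizer identities, and the uniqueness from $m$ being split monic --- all check out.
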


\begin{proof}
The proof, which uses the dual of Beck's Theorem~\cite{Ma} to show that $H'$ is an isomorphism, is virtually the dual of the proof of Theorem~\ref{thm:rbmon} and is omitted.
\end{proof}

In order to obtain the isomorphism $\DRB  \cong (\ALG^{\T})_{\widetilde{\C}}$, we need the following result from category theory. It is a part of the "folklore" of category theory. We include a proof for completeness.

\begin{lemma}
Suppose that $\A$ and $\B$ are categories, $K : \A \rightarrow \B$ is an isomorphism of categories, $\C = \langle C, \varepsilon, \delta \rangle $ is a comonad on $\A$ and $\C' = \langle C', \varepsilon', \delta' \rangle $ is a comonad on $\B$.
If $K$ commutes with $\C$ and $\C'$, i.e., $KC = C'K$, $K\varepsilon = \varepsilon' K$ and $K\delta = \delta'K$, then there exists a unique isomorphism $\widetilde{K}: \A_{\C} \rightarrow \B_{\C'}$ that lifts $K$, i.e., $U_{\C'}\widetilde{K} = KU_{\C}$.
\mlabel{lem:liftiso}
\end{lemma}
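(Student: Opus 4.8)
The plan is to build $\widetilde{K}$ by literally ``applying $K$'' to coalgebras and to their morphisms, the point being that the three intertwining identities $KC=C'K$, $K\varepsilon=\varepsilon'K$, $K\delta=\delta'K$ are precisely what is needed to convert a $\C$-coalgebra structure into a $\C'$-coalgebra structure. Concretely, for an object $\langle A,f\rangle\in\A_{\C}$ (so $f\colon A\to C(A)$ with $\varepsilon_A\circ f=\id_A$ and $\delta_A\circ f=C(f)\circ f$) I set $\widetilde{K}\langle A,f\rangle=\langle K(A),K(f)\rangle$; here $K(f)\colon K(A)\to K(C(A))=C'(K(A))$ is a legitimate candidate costructure exactly because $KC=C'K$. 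On a morphism $\varphi\colon\langle A,f\rangle\to\langle B,g\rangle$ I set $\widetilde{K}(\varphi)=K(\varphi)$.

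First I would check well-definedness on objects. The counit law for $\widetilde{K}\langle A,f\rangle$ reads $\varepsilon'_{K(A)}\circ K(f)=\id_{K(A)}$; since $\varepsilon'_{K(A)}=K(\varepsilon_A)$ by $K\varepsilon=\varepsilon'K$, this is $K(\varepsilon_A\circ f)=K(\id_A)$, which holds. The coassociativity law $\delta'_{K(A)}\circ K(f)=C'(K(f))\circ K(f)$ follows from $\delta'_{K(A)}=K(\delta_A)$ (by $K\delta=\delta'K$) together with $C'(K(f))=K(C(f))$ (the functors $C'K$ and $KC$ coincide, hence agree on the morphism $f$), after applying $K$ to $\delta_A\circ f=C(f)\circ f$. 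The same two substitutions turn the morphism identity $g\circ\varphi=C(\varphi)\circ f$ into $K(g)\circ K(\varphi)=C'(K(\varphi))\circ K(f)$, so $\widetilde{K}(\varphi)$ is a morphism in $\B_{\C'}$. Functoriality of $\widetilde{K}$ is inherited from $K$, and $U_{\C'}\widetilde{K}=KU_{\C}$ holds by construction on both objects and morphisms.

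To see that $\widetilde{K}$ is an isomorphism I would run the same construction on $K^{-1}$. Because $K$ is an isomorphism of categories, the hypotheses self-dualize: $KC=C'K$ gives $CK^{-1}=K^{-1}C'$, and likewise $\varepsilon K^{-1}=K^{-1}\varepsilon'$ and $\delta K^{-1}=K^{-1}\delta'$. Hence the same recipe produces a functor $\widetilde{K^{-1}}\colon\B_{\C'}\to\A_{\C}$ lifting $K^{-1}$. Since on underlying objects and morphisms the composites $\widetilde{K^{-1}}\,\widetilde{K}$ and $\widetilde{K}\,\widetilde{K^{-1}}$ act as $K^{-1}K=\id$ and $KK^{-1}=\id$, and since they visibly return the original costructures, they are the identity functors; thus $\widetilde{K}^{-1}=\widetilde{K^{-1}}$.

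The delicate part is uniqueness, and this is where I expect the real work. Any lift $\widetilde{K}'$ is forced on underlying data: $\widetilde{K}'\langle A,f\rangle=\langle K(A),f'\rangle$ for some costructure $f'$, and $\widetilde{K}'(\varphi)=K(\varphi)$ on morphisms. To pin down $f'=K(f)$ I would exploit that the costructure $f$ is itself a coalgebra morphism $f\colon\langle A,f\rangle\to\langle C(A),\delta_A\rangle$ (this is exactly the coassociativity law), that a lift carries this cofree coalgebra to $\langle C'(K(A)),\delta'_{K(A)}\rangle$, and that $\widetilde{K}'(f)$ then has underlying map $K(f)$. Writing out that $K(f)$ is a coalgebra morphism $\langle K(A),f'\rangle\to\langle C'(K(A)),\delta'_{K(A)}\rangle$ gives $\delta'_{K(A)}\circ K(f)=C'(K(f))\circ f'$; composing on the left with $C'(\varepsilon'_{K(A)})$ and using the comonad identity $C'(\varepsilon'_{K(A)})\circ\delta'_{K(A)}=\id$ on the left side and $\varepsilon'_{K(A)}\circ K(f)=\id$ on the right side collapses this to $K(f)=f'$. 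The one nontrivial input here is that every lift sends cofree coalgebras to cofree coalgebras; I would establish this from the intertwining identities (equivalently, check that $\widetilde{K}'$ commutes with the cofree functors, which our canonical $\widetilde{K}$ manifestly does since $\widetilde{K}\langle C(A),\delta_A\rangle=\langle C'(K(A)),\delta'_{K(A)}\rangle$), and securing this compatibility for an \emph{arbitrary} lift is the main obstacle to a fully self-contained uniqueness argument.
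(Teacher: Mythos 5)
Your construction of $\widetilde{K}$ and of its inverse is exactly the paper's argument: apply $K$ objectwise and morphismwise, use $KC=C'K$, $K\varepsilon=\varepsilon'K$ and $K\delta=\delta'K$ to transport the counit law, the coassociativity law and the morphism condition, and invert by running the same recipe on $K^{-1}$. That part is complete, correct, and matches the paper.

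The gap you flag in the uniqueness step is genuine, and you should know the paper does not close it either: it disposes of uniqueness in one line, ``because $U_{\C}$ and $U_{\C'}$ are faithful.'' Faithfulness forces any lift $\widetilde{K}'$ to agree with $\widetilde{K}$ on underlying morphisms and forces the underlying object of $\widetilde{K}'\langle A,f\rangle$ to be $K(A)$, but it says nothing about the costructure $f'$ that $\widetilde{K}'\langle A,f\rangle$ carries --- which is precisely where your argument gets stuck. Your route through the cofree coalgebra $\langle C(A),\delta_A\rangle$ is the right way to try to pin down $f'=K(f)$, but the missing input (that an arbitrary lift sends $\langle C(A),\delta_A\rangle$ to $\langle C'K(A),\delta'_{K(A)}\rangle$) cannot be derived from $U_{\C'}\widetilde{K}'=KU_{\C}$ alone. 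Indeed, lifts of $K$ correspond to natural transformations $KC\rightarrow C'K$ compatible with the comonad structures, and when $\A=\B$, $K=\id$ and $\C=\C'$ admits a nontrivial comonad automorphism, one obtains an isomorphism $\A_{\C}\rightarrow\A_{\C}$ lifting the identity that is not the identity functor. So uniqueness in the literal sense stated is not available; it holds only under a stronger reading of ``lift'' (e.g., compatibility with the cofree functors, or with $\varepsilon$ and $\delta$, which the constructed $\widetilde{K}$ does satisfy). Since the corollary invoking this lemma uses only the existence of the isomorphism, nothing downstream is affected, but your instinct that faithfulness alone does not settle uniqueness is correct.
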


\begin{proof}
Since $K : \A \rightarrow \B$ is an isomorphism of categories, there is a functor
$H : \B \rightarrow \A$ such that $HK=\id_{\A}$ and $KH=\id_{\B}$ where $\id_{\A}$ and $\id_{\B}$ are the identity functors on $\A$ and $\B$, respectively.

For any $\langle a,f\rangle\in\A_{\C}$,
we claim that $\langle K(a),K(f)\rangle$ is in $\B_{\C'}$.
Since $KC = C'K$, we have $K(C(a))=C'(K(a))$ and $K(C(f))=C'(K(f))$. As $f$ is a morphism in $\A$ from $a$ to $C(a)$, $K(f)$ is a morphism in $\B$ from $K(a)$ to $K(C(a))=C'(K(a))$.
From $K\varepsilon = \varepsilon' K$ and $K\delta = \delta'K$, we get
$K(\varepsilon_a) = \varepsilon'_{K(a)}$ and $K(\delta_a) = \delta'_{K(a)}$.
By $\langle a,f\rangle\in\A_{\C}$, we obtain $\varepsilon_a\circ f=\id_a$ and
$C(f)\circ f=\delta_a\circ f.$
Then $K(\varepsilon_{a})\circ K(f)=\id_{K(a)}$
and $K(C(f))\circ K(f)=K(\delta_a)\circ K(f)$ hold.
That is, $\varepsilon'_{K(a)}\circ K(f)=\id_{K(a)}$
and $C'(K(f))\circ K(f)=\delta'_{K(a)}\circ K(f)$ hold.
Then we find that $\langle K(a),K(f)\rangle$ is in $\B_{\C'}.$

Define a functor $\widetilde{K} : \A_{\C} \rightarrow \B_{\C'}$ given on
objects $\langle a,f\rangle\in\A_{\C}$ by $\widetilde{K}\langle a,f\rangle=\langle K(a),K(f)\rangle$ and on morphisms $\phi:\langle a_1,f_1\rangle\rightarrow\langle a_2,f_2\rangle$ by
$\widetilde{K}(\phi)=K(\phi)$.
Similarly, we can define a functor $\widetilde{H} : \B_{\C'}\rightarrow \A_{\C}.$

From $HK=\id_{\A}$, for any $\langle a,f\rangle$ and the identity morphism $\id_{\langle a,f\rangle}$,
we get
$$(\widetilde{H}\widetilde{K})\langle a,f\rangle=\langle (HK)(a),(HK)(f)\rangle=\langle a,f\rangle$$
and
$$(\widetilde{H}\widetilde{K})(\id_{\langle a,f\rangle})=\widetilde{H}(\id_{\widetilde{K}\langle a,f\rangle})=\id_{(\widetilde{H}\widetilde{K})\langle a,f\rangle}=\id_{\langle a,f\rangle}.$$
For any morphisms $\phi:\langle a_1,f_1\rangle\rightarrow\langle a_2,f_2\rangle$
and $\varphi:\langle a_2,f_2\rangle\rightarrow\langle a_3,f_3\rangle$,
we get $$(\widetilde{H}\widetilde{K})(\varphi\phi)=\widetilde{H}(K(\varphi)K(\phi))
=(HK)(\varphi)(HK)(\phi)=\varphi\phi.$$
Then $\widetilde{H}\widetilde{K}=\id_{\A_{\C}}$
 where $\id_{\A_{\C}}$ is the identity functor on $\A_{\C}$.
Similarly, from $KH=\id_{\B}$, we can get  $\widetilde{K}\widetilde{H}=\id_{\B_{\C}}$
 where $\id_{\B_{\C}}$ is the identity functor on $\B_{\C}$.
That is, $\widetilde{K}: \A_{\C} \rightarrow \B_{\C'}$ is an isomorphism of categories.

The uniqueness of $\widetilde{K}$ follows because the functors $U_{\C}$ and $U_{\C'}$ are faithful.
\end{proof}

\begin{coro}
There an isomorphism of categories $\widetilde{K}: \RBA_{\C'} \rightarrow (\ALG^{\T})_{\widetilde{C}}$ such that
$V_{\widetilde{\C}}\widetilde{K}=K V'_{\C'}$.
\mlabel{coro:isolift}
\end{coro}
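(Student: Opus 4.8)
The plan is to derive the corollary directly from Lemma~\ref{lem:liftiso}, applied to the isomorphism $K:\RBA\to\ALG^{\T}$ of Theorem~\ref{thm:rbmon}. In the notation of that lemma I set $\A=\RBA$ and $\B=\ALG^{\T}$, take the comonad on $\A$ to be $\C'$ (the comonad on $\RBA$ whose coalgebras recover $\DRB$), and take the comonad on $\B$ to be the lifted comonad $\widetilde{\C}=\langle\widetilde{C},\widetilde{\varepsilon},\widetilde{\delta}\rangle$ on $\ALG^{\T}$ produced by Corollary~\ref{coro:lifting}. (The roles of the primes are thus swapped relative to the statement of Lemma~\ref{lem:liftiso}: here it is the comonad on the source category that carries the prime.) Under these identifications the forgetful functor out of $\RBA_{\C'}$ is $V'_{\C'}$ and the one out of $(\ALG^{\T})_{\widetilde{C}}$ is $V_{\widetilde{\C}}$, so the conclusion of the lemma is exactly the existence of a unique isomorphism $\widetilde{K}:\RBA_{\C'}\to(\ALG^{\T})_{\widetilde{C}}$ with $V_{\widetilde{\C}}\widetilde{K}=KV'_{\C'}$. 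Everything therefore reduces to verifying the three compatibility conditions $KC'=\widetilde{C}K$, $K\varepsilon'=\widetilde{\varepsilon}K$ and $K\delta'=\widetilde{\delta}K$.

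The last two are immediate. The functor $K$ leaves the underlying algebra and the underlying morphisms unchanged, merely attaching the $\T$-structure $U(\varepsilon_{(A,P)})$; meanwhile $\varepsilon'_{(A,P)}=\varepsilon_A$ and $\delta'_{(A,P)}=\delta_A$ on $\RBA$, while $\widetilde{\varepsilon}_{\langle A,h\rangle}=\varepsilon_A$ and $\widetilde{\delta}_{\langle A,h\rangle}=\delta_A$ on $\ALG^{\T}$. Hence $(K\varepsilon')_{(A,P)}$ and $(\widetilde{\varepsilon}K)_{(A,P)}$ are both the morphism $\varepsilon_A$, and similarly for $\delta$, giving $K\varepsilon'=\widetilde{\varepsilon}K$ and $K\delta'=\widetilde{\delta}K$.

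The main step, and the only one requiring real work, is the object-level identity $KC'=\widetilde{C}K$. Writing $h=U(\varepsilon_{(A,P)})$ for the $\T$-structure attached to $(A,P)$, the two sides read $KC'(A,P)=\langle A^{\NN},U(\varepsilon_{(A^{\NN},\widetilde{P})})\rangle$ and $\widetilde{C}K(A,P)=\langle A^{\NN},h^{\NN}\circ\beta_A\rangle$, so what must be shown is the equality of $\T$-structures
\[
U(\varepsilon_{(A^{\NN},\widetilde{P})})=h^{\NN}\circ\beta_A:\sha(A^{\NN})\to A^{\NN}.
\]
I plan to obtain this from the universal property of the free Rota-Baxter algebra $\sha(A^{\NN})$ on $A^{\NN}$ (Theorem~\ref{thm:shua}). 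The left-hand map is by definition the unique Rota-Baxter algebra homomorphism $(\sha(A^{\NN}),P_{A^{\NN}})\to(A^{\NN},\widetilde{P})$ restricting to $\id_{A^{\NN}}$ on generators. For the right-hand map, $\beta_A:(\sha(A^{\NN}),P_{A^{\NN}})\to((\sha(A))^{\NN},\widetilde{P_A})$ is a Rota-Baxter algebra homomorphism by Lemma~\ref{lem:beta}, and since $h:(\sha(A),P_A)\to(A,P)$ is one, Lemma~\ref{lem:Nmorph} shows $h^{\NN}:((\sha(A))^{\NN},\widetilde{P_A})\to(A^{\NN},\widetilde{P})$ is as well; hence $h^{\NN}\circ\beta_A$ is a Rota-Baxter algebra homomorphism $(\sha(A^{\NN}),P_{A^{\NN}})\to(A^{\NN},\widetilde{P})$. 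Finally, using Eq.~(\ref{eq:c0}) and the $\T$-algebra axiom $h\circ\eta_A=\id_A$ from Eq.~(\ref{eq:Tcate}),
\[
(h^{\NN}\circ\beta_A)\circ\eta_{A^{\NN}}=h^{\NN}\circ(\eta_A)^{\NN}=(h\circ\eta_A)^{\NN}=\id_{A^{\NN}},
\]
so $h^{\NN}\circ\beta_A$ also extends $\id_{A^{\NN}}$. By the uniqueness clause of the universal property the two maps coincide. The agreement of $KC'$ and $\widetilde{C}K$ on morphisms is routine, since both functors send a morphism $\varphi$ to $\varphi^{\NN}$ on underlying maps. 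With the three compatibilities established, Lemma~\ref{lem:liftiso} produces the unique isomorphism $\widetilde{K}$ together with the required lifting identity.

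I expect the object-level identity $U(\varepsilon_{(A^{\NN},\widetilde{P})})=h^{\NN}\circ\beta_A$ to be the crux: it is precisely the assertion that the Rota-Baxter structure $\widetilde{P}$ on $A^{\NN}$ coming from Proposition~\ref{pp:rbext} is the one that the mixed distributive law assigns to $A^{\NN}$, and establishing it cleanly hinges on recognizing both $\beta_A$ and $h^{\NN}$ as Rota-Baxter homomorphisms so that the uniqueness in the universal property can be invoked.
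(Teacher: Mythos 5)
Your proposal is correct and follows the same route as the paper, which likewise obtains $\widetilde{K}$ by applying Lemma~\ref{lem:liftiso} with $\A=\RBA$, $\B=\ALG^{\T}$, $K$ from Theorem~\ref{thm:rbmon}, and the comonads $\C'$ and $\widetilde{\C}$. The paper's proof is a one-line citation of that lemma and leaves the compatibility hypotheses unchecked, whereas you supply the missing verification --- in particular the identity $U(\varepsilon_{(A^{\NN},\widetilde{P})})=h^{\NN}\circ\beta_A$ via the uniqueness clause of the universal property of $\sha(A^{\NN})$ --- and that verification is sound.
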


\begin{proof}
Just apply Lemma~\ref{lem:liftiso} to the case where $\A = \RBA$, $\B = \ALG^{\T}$, $K$ is the isomorphism from Theorem~\ref{thm:rbmon}, $\C'$ is the comonad on $\RBA$ that follows from Proposition~\ref{pp:adjDRB}, and $\widetilde{\C}$ is the comonad on $\ALG^{\T}$ from Corollary~\ref{coro:lifting}.
\end{proof}

We next consider the monad on differential algebras giving differential Rota-Baxter algebras. Observe that there is a monad $\T' = \langle T', \eta', \mu' \rangle$ on the category $\DIF$ given by $T' : \DIF \rightarrow \DIF$, where $T'(A,d) = (\sha(A), \widetilde{d})$. Here $\widetilde{d} : \sha(A) \rightarrow \sha(A)$ is defined as follows:
\begin{eqnarray*}
\lefteqn{ \widetilde{d}(x_0\otimes x_1\otimes\ldots\otimes x_n)}\\
&=&
d(x_0)\otimes x_1\otimes \ldots \otimes x_n + x_0x_1\otimes
x_2\ldots \otimes x_n +\lambda d(x_0) x_1\otimes x_1\otimes
\ldots \otimes x_n
\end{eqnarray*}
for $x_0\otimes \ldots \otimes x_n\in
A^{\otimes (n+1)}$ and then extending by linearity. Here we
use the convention that when $n=0$, $\widetilde{d}(x_0)=d(x_0)$.  Then we have the following result from\cite{GK3}:

\begin{theorem} $($\cite{GK3}$)$
Let $(A,d)$ be a
differential algebra of weight $\lambda$.
\begin{enumerate}
\item
The algebra embedding
\[ j_A: A \rar \sha(A)\]
is a morphism of differential algebras of weight $\lambda$.
\item
The quadruple $(\sha(A),\widetilde{d},P_A,j_A)$ is a free differential
Rota-Baxter algebra of weight $\lambda$ on the differential
algebra $(A,d)$, as described by the
following universal property: For any differential Rota-Baxter
algebra $(R,D,P)$ of weight $\lambda$ and any
differential algebra map $\varphi: (A,d)\rar (R,D)$, there
exists a unique differential Rota-Baxter algebra
homomorphism $\tilde{\varphi}:(\sha(A),\widetilde{d},P_A)\rar (R,D,P)$
such that $U'(\tilde{\varphi})\circ j_A=\varphi$.
\end{enumerate}
\mlabel{thm:diff}
\end{theorem}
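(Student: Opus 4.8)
The plan is to deduce the universal property from that of the free Rota-Baxter algebra (Theorem~\ref{thm:shua}) and then show that the map it produces is automatically compatible with the two differential operators. Throughout I write $\mathfrak{a}=x_0\ot x_1\ot\cdots\ot x_n$ for a pure tensor and use freely that $\widetilde{d}$ is a $\lambda$-derivation on $\sha(A)$ (which is forced by the displayed formula, $\widetilde{d}$ being the unique $\lambda$-derivation extending $d$ that inverts $P_A$). Part (a) is then immediate: since $j_A(x)=x\in A=A^{\ot 1}$ and $\widetilde{d}(x)=d(x)$ by the stated convention, we get $\widetilde{d}(j_A(x))=d(x)=j_A(d(x))$, so $j_A$ intertwines $d$ and $\widetilde{d}$.

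For part (b) I would first record the two structural identities that power the argument. First, $\widetilde{d}\circ P_A=\id_{\sha(A)}$: applying $\widetilde{d}$ to $P_A(\mathfrak{a})=\bfone_A\ot x_0\ot\cdots\ot x_n$ and using $d(\bfone_A)=0$ kills the first and third terms of the defining formula and leaves $(\bfone_A x_0)\ot x_1\ot\cdots\ot x_n=\mathfrak{a}$; in particular $(\sha(A),\widetilde{d},P_A)$ is itself a differential Rota-Baxter algebra. Second, by the product rule~(\ref{eq:mshprod1}), every pure tensor factors as
\[
x_0\ot x_1\ot\cdots\ot x_n=x_0\cdot P_A(x_1\ot\cdots\ot x_n)
\]
in $\sha(A)$. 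Now, given $(R,D,P)\in\DRB$ and a differential algebra map $\varphi:(A,d)\to(R,D)$, Theorem~\ref{thm:shua} furnishes a unique Rota-Baxter algebra homomorphism $\tilde{\varphi}:(\sha(A),P_A)\to(R,P)$ with $\tilde{\varphi}\circ j_A=\varphi$. This settles existence and uniqueness on the Rota-Baxter side; since a $\DRB$-morphism is in particular an $\RBA$-morphism, uniqueness in $\DRB$ is automatic. It remains only to verify that this $\tilde{\varphi}$ satisfies $\tilde{\varphi}\circ\widetilde{d}=D\circ\tilde{\varphi}$.

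By linearity it suffices to check the last equation on a pure tensor $\mathfrak{a}=x_0\cdot P_A(\mathfrak{a}')$, where $\mathfrak{a}'=x_1\ot\cdots\ot x_n$ (the case $n=0$ being $\tilde{\varphi}(d(x_0))=\varphi(d(x_0))=D(\varphi(x_0))=D(\tilde{\varphi}(x_0))$). Expanding $\widetilde{d}(x_0\cdot P_A(\mathfrak{a}'))$ by the weight-$\lambda$ Leibniz rule and collapsing $\widetilde{d}(P_A(\mathfrak{a}'))=\mathfrak{a}'$, then applying $\tilde{\varphi}$ (an algebra homomorphism with $\tilde{\varphi}\circ P_A=P\circ\tilde{\varphi}$) and using $\varphi\circ d=D\circ\varphi$, produces an expression in $D(\varphi(x_0))$, $\varphi(x_0)$, $\tilde{\varphi}(\mathfrak{a}')$ and $P(\tilde{\varphi}(\mathfrak{a}'))$. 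On the other hand, computing $D(\tilde{\varphi}(\mathfrak{a}))=D\big(\varphi(x_0)\,P(\tilde{\varphi}(\mathfrak{a}'))\big)$ by the weight-$\lambda$ Leibniz rule in $R$ and collapsing $D(P(\tilde{\varphi}(\mathfrak{a}')))=\tilde{\varphi}(\mathfrak{a}')$ via $D\circ P=\id_R$ yields precisely the same three-term expression. Hence the two maps agree on every pure tensor, so $\tilde{\varphi}$ is a morphism in $\DRB$, as required.

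The heart of the proof, and its only genuine obstacle, is this last verification: that freeness on the Rota-Baxter side alone forces differential compatibility. What makes it go through cleanly is the exact symmetry between $\widetilde{d}\circ P_A=\id$ on the source and $D\circ P=\id_R$ on the target --- both being the first-fundamental-theorem relation built into the definition of $\DRB$ --- which causes the $P$-term on each side to collapse identically. It is worth noting that no induction on the tensor length is needed; the weight-$\lambda$ Leibniz rule together with these two collapsing identities reduces everything to a direct term-by-term comparison.
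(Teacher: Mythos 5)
The paper itself offers no proof of this theorem --- it is imported verbatim from \cite{GK3} --- so your proposal can only be measured against the argument of that source. Your reduction of the universal property to Theorem~\ref{thm:shua} is correct and matches it: uniqueness of $\tilde\varphi$ is inherited from the Rota--Baxter side, the factorization $x_0\ot x_1\ot\cdots\ot x_n=x_0\shpr P_A(x_1\ot\cdots\ot x_n)$ together with $\widetilde{d}\circ P_A=\id_{\ssha(A)}$ and $D\circ P=\id_R$ reduces the differential compatibility of $\tilde\varphi$ to a three-term comparison, and that comparison checks out. (In fact for this step you do not even need the Leibniz rule for $\widetilde{d}$: the displayed definition of $\widetilde{d}$ literally equals $d(x_0)\shpr P_A(\mathfrak{a}')+x_0\shpr\mathfrak{a}'+\lambda d(x_0)\shpr\mathfrak{a}'$, so $\tilde\varphi\circ\widetilde{d}=D\circ\tilde\varphi$ follows by applying the algebra homomorphism $\tilde\varphi$ term by term.)

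The genuine gap is at the outset, where you announce that you will ``use freely that $\widetilde{d}$ is a $\lambda$-derivation on $\sha(A)$'' because this is ``forced by the displayed formula.'' What the displayed formula forces is only the \emph{uniqueness} of a $\lambda$-derivation extending $d$ and inverting $P_A$ (exactly via the factorization $\mathfrak{a}=x_0\shpr P_A(\mathfrak{a}')$ that you exploit later); it does not give \emph{existence}, i.e.\ it does not show that the linear map so defined satisfies $\widetilde{d}(\mathfrak{a}\shpr\mathfrak{b})=\widetilde{d}(\mathfrak{a})\shpr\mathfrak{b}+\mathfrak{a}\shpr\widetilde{d}(\mathfrak{b})+\lambda\,\widetilde{d}(\mathfrak{a})\shpr\widetilde{d}(\mathfrak{b})$ for arbitrary pure tensors $\mathfrak{a},\mathfrak{b}$. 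Without this, $(\sha(A),\widetilde{d},P_A)$ is not known to be an object of $\DRB$, and both parts of the statement fail to typecheck. This Leibniz verification is the computational core of the result in \cite{GK3}, carried out there by induction on tensor length using the recursion in Eq.~(\ref{eq:mshprod2}) --- precisely the induction you assert is ``not needed.'' Everything else in your write-up is sound.
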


Let $F':\DIF \rar\DRB$ denote the functor given on
objects $(A,d) \in \DIF$ by $F'(A,d) = (\sha(A),\widetilde{d},P_A)$ and on morphisms
$f:(A,d) \rar (A',d')$ in $\DIF$ by $$F'(f)(\sum^{k}_{i=1}a_{i0}\otimes a_{i1}\otimes\cdots \otimes a_{in_{i}})=\sum^{k}_{i=1}f(a_{i0})\otimes f(a_{i1})\otimes\cdots \otimes f(a_{in_{i}}),\quad
\sum^{k}_{i=1}a_{i0}\otimes a_{i1}\otimes\cdots \otimes a_{in_{i}}\in\sha(A).$$
There are two natural transformations $\eta': \id_{\DIF} \rightarrow U'F'$ and $\varepsilon': F'U' \rightarrow \id_{\DRB}$.
For any $(A,d) \in \DIF$, we define
$\eta'_{(A,d)}:(A,d)\rightarrow (\sha(A),\widetilde{d})$ to be just the natural embedding map.
For any $(R,D,P) \in \DRB$, define $\varepsilon'_{(R,D,P)}: (\sha(R),\widetilde{d},P_R) \rightarrow (R,D,P)$ by
$$\varepsilon'_{(R,d,P)}\Big(\sum^{k}_{i=1}a_{i0}\otimes a_{i1}\otimes\cdots \otimes a_{in_{i}}\Big) =
\sum^{k}_{i=1}a_{i0}P(a_{i1}P(\cdots P(a_{in_{i}})\cdots)),\quad \sum^{k}_{i=1}a_{i0}\otimes a_{i1}\otimes\cdots \otimes a_{in_{i}} \in \sha(R).$$

As a consequence of Theorem~\mref{thm:diff}, we obtain
\begin{coro}
The functor $F':\DIF \rightarrow \DRB$ defined above is the left adjoint of the forgetful functor
$U':\DRB \rightarrow \DIF$. Moreover, we get an adjunction $\langle F', U', \eta', \varepsilon' \rangle:\DIF\rightharpoonup\DRB$.
\end{coro}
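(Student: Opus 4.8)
The plan is to deduce the adjunction directly from the universal property in Theorem~\ref{thm:diff}, exactly as the corollary following Theorem~\ref{thm:shua} was deduced from that theorem. Theorem~\ref{thm:diff}(2) asserts that for every $(A,d) \in \DIF$ the embedding $j_A = \eta'_{(A,d)}$ is a \emph{universal arrow} from $(A,d)$ to the forgetful functor $U'$: every differential algebra map $\varphi : (A,d) \rar U'(R,D,P)$ factors uniquely as $U'(\tilde{\varphi}) \circ j_A$ through a $\DRB$-homomorphism $\tilde{\varphi}$. By the standard correspondence between families of universal arrows and adjunctions \cite{Ma}, this is equivalent to saying that $F'$ is left adjoint to $U'$, with unit given by the arrows $\eta'_{(A,d)} = j_A$.

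To present the adjunction in the asserted form $\langle F', U', \eta', \varepsilon' \rangle$, I would identify the counit and check the triangle identities against the explicit formulas already given. The counit $\varepsilon'_{(R,D,P)}$ must correspond under the universal-property bijection to $\id_{U'(R,D,P)}$; that is, it is the unique $\DRB$-homomorphism $(\sha(R),\widetilde{d},P_R) \rar (R,D,P)$ extending $\id_{(R,D)}$ along $j_R$. One checks that the stated formula $a_0 \otimes \cdots \otimes a_n \mapsto a_0 P(a_1 P(\cdots P(a_n)\cdots))$ is precisely this extension. The counit triangle $U'\varepsilon' \circ \eta' U' = \id_{U'}$ is then immediate: $j_R$ places an element $r$ in the first tensor slot, and $\varepsilon'_{(R,D,P)}$ returns the leading factor $a_0 = r$. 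The unit triangle $\varepsilon' F' \circ F'\eta' = \id_{F'}$ is verified on a pure tensor by the familiar free Rota-Baxter computation, unwinding the recursive mixable-shuffle product and the action of $P_A$.

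The only genuinely new point, and the step I expect to require the most care, is confirming that $\varepsilon'_{(R,D,P)}$ respects the differential structure, i.e.\ that $D \circ \varepsilon'_{(R,D,P)} = \varepsilon'_{(R,D,P)} \circ \widetilde{d}$; its compatibility with the Rota-Baxter operators is the classical free-$\RBA$ counit condition already recorded in Section~\ref{sec:rba}. However, this differential compatibility is exactly what Theorem~\ref{thm:diff}(2) guarantees, since $\varepsilon'_{(R,D,P)}$ is produced as the \emph{differential} Rota-Baxter homomorphism extending $\id_{(R,D)}$; thus no separate computation is needed beyond invoking the theorem. Finally, naturality of $\eta'$ and $\varepsilon'$ follows from the uniqueness clause of the same universal property, completing the verification that $\langle F', U', \eta', \varepsilon' \rangle : \DIF \rightharpoonup \DRB$ is an adjunction.
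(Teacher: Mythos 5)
Your proposal is correct and matches the paper's intended argument: the paper states this corollary without proof as an immediate consequence of Theorem~\ref{thm:diff}, precisely via the standard equivalence between a family of universal arrows to $U'$ and an adjunction, which is the route you take. Your additional identification of the counit and verification of the triangle identities is sound but not strictly necessary, since MacLane's universal-arrow theorem supplies them automatically.
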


The above adjunction $\langle F', U', \eta', \varepsilon' \rangle:\DIF\rightharpoonup\DRB$ gives rise to a monad $\T'=\langle T',\eta',\mu' \rangle$ on $\DIF$,  where
$T' = U' F':\DIF \rightarrow \DRB$
is a functor and $\mu':=U'\varepsilon' F': T' T'\rightarrow T'$.
Indeed, for any $(A,d) \in \DIF$, $T'(A,d)=(\sha(A),\widetilde{d})$ and
$\mu'_{(A,d)}:(\sha(\sha(A)),\widetilde{\widetilde{d}})\rightarrow (\sha(A), \widetilde{d})$ is extended by
 \begin{eqnarray*}
&&\mu'_{(A,d)}( (a_{10}\otimes\cdots\otimes a_{1n_{1}})\otimes\cdots\otimes(a_{k0}\otimes\cdots\otimes a_{kn_{k}}))\\
&=&\mu_A( (a_{10}\otimes\cdots\otimes a_{1n_{1}})\otimes\cdots\otimes(a_{k0}\otimes\cdots\otimes a_{kn_{k}}))\\
&=&(a_{10}\otimes\cdots\otimes a_{1n_{1}}) P_A(\cdots P_A(a_{k0}\otimes\cdots\otimes a_{kn_{k}})\cdots).
\end{eqnarray*}

As before, the monad $\T'$ induces a category
of $\T'$-algebras, denoted by $\DIF^{\T'}$ and gives rise to an adjunction
$$\langle F'^{\T'}, U'^{\T'},\eta'^{\T'} , \varepsilon'^{\T'} \rangle:\DIF
\rightharpoonup \DIF^{\T'},$$
where
$F'^{\T'} : \DIF \rightarrow \DIF^{\T'}$ is given on objects
by $F'^{\T'}(A,d) = \langle(\sha(A),\widetilde{d}),\mu'_{(A,d)}\rangle$ and on morphisms
$\varphi:(A,d)\rightarrow (A', d')$ in $\DIF$ by $F'^{\T'}(\varphi)=F'(\varphi)$.  The functor
$U'^{\T'}:\DIF^{\T'}\rightarrow \DIF$
is defined on objects
$\langle (A,d),f\rangle$ by $U'^{\T'}\langle (A,d),f\rangle = (A,d)$, and on morphisms
$\phi:\langle (A,d),f\rangle\rightarrow\langle (A',d'),f'\rangle$ in $\DIF^{\T'}$
by $U'^{\T'}(\phi)=\phi$.  The natural transformations $\varepsilon'^{\T'}$ and
$\eta'^{\T'}$ are defined similarly as $\varepsilon'$ and $\eta'$.
Then there is a uniquely defined comparison functor
$K':\DRB \rightarrow \DIF^{\T'}$ such that $K'\circ F'=F'^{\T'}$
and $U'^{\T'}\circ K'=U'$. Here $K'(R,D,P)=\langle (R,D),U'(\varepsilon'_{(R,D,P)})\rangle$ for any $(R,D,P)\in\DRB$.

\begin{theorem}
The comparison functor $K':\DRB \rightarrow \DIF^{\T'}$ is
an isomorphism, i.e., $\DRB$ is monadic over $\DIF$.
\mlabel{thm:drbmon}
\end{theorem}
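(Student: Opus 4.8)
The plan is to prove monadicity directly via Beck's Theorem~\cite[Theorem 1, p.147]{Ma}, exactly as in the proof of Theorem~\ref{thm:rbmon}, but now over the base category $\DIF$ in place of $\ALG$. Thus it suffices to show that the forgetful functor $U':\DRB\rightarrow\DIF$ creates coequalizers for those parallel pairs $f,g:(R,d,P)\rightarrow(R',d',P')$ in $\DRB$ for which the pair $U'(f),U'(g):(R,d)\rightarrow(R',d')$ admits a split coequalizer in $\DIF$. Such a split coequalizer supplies differential algebra homomorphisms $e:(R',d')\rightarrow(\overline{R},\overline{d})$, $s:(\overline{R},\overline{d})\rightarrow(R',d')$ and $t:(R',d')\rightarrow(R,d)$ with $e$ a coequalizer of $U'(f),U'(g)$, together with the split coequalizer identities $e\circ s=\id_{\overline{R}}$, $U'(f)\circ t=\id_{R'}$ and $s\circ e=U'(g)\circ t$. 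The crucial feature of working over $\DIF$ is that the object $(\overline{R},\overline{d})$, and in particular the differential $\overline{d}$, is handed to us by the split coequalizer, and that $e$, $s$, $t$ already commute with the differentials.

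Next I would equip $\overline{R}$ with the Rota-Baxter operator $\overline{P}:=e\circ P'\circ s$, exactly as in Theorem~\ref{thm:rbmon}. Since the verification there that $\overline{P}$ satisfies the Rota-Baxter identity of weight $\lambda$, and that $e$ is a Rota-Baxter algebra homomorphism, uses only the underlying algebra structure and the split coequalizer identities, those computations carry over verbatim. The one genuinely new point is the compatibility condition $\overline{d}\circ\overline{P}=\id_{\overline{R}}$ required for $(\overline{R},\overline{d},\overline{P})$ to lie in $\DRB$. This drops out of a short computation: using that $e$ is a morphism in $\DIF$, that $(R',d',P')\in\DRB$ so $d'\circ P'=\id_{R'}$, and that $e\circ s=\id_{\overline{R}}$, we get
\[
\overline{d}\circ\overline{P}=\overline{d}\circ e\circ P'\circ s=e\circ d'\circ P'\circ s=e\circ\id_{R'}\circ s=e\circ s=\id_{\overline{R}}.
\]
Together with the fact that $e$ commutes with the differentials and (by the reused computation) with the Rota-Baxter operators, this shows that $e:(R',d',P')\rightarrow(\overline{R},\overline{d},\overline{P})$ is a morphism in $\DRB$.

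It then remains to show that $e$ is a coequalizer of $f,g$ in $\DRB$, which, with the uniqueness of the lift, establishes that $U'$ creates the coequalizer. Given any $h:(R',d',P')\rightarrow(R'',d'',P'')$ in $\DRB$ with $h\circ f=h\circ g$, the fact that $e$ is a coequalizer in $\DIF$ produces a unique differential algebra homomorphism $h':(\overline{R},\overline{d})\rightarrow(R'',d'')$ with $h=h'\circ e$; the same computation as in Theorem~\ref{thm:rbmon} shows that $h'$ is moreover a Rota-Baxter algebra homomorphism, hence a morphism in $\DRB$, and it is the unique such factorization. Uniqueness of $\overline{P}$ making $e$ a $\DRB$-morphism follows as in the monadic case, since $e$ is epi and $U'$ is faithful. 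I expect no serious obstacle here: essentially all of the Rota-Baxter bookkeeping is imported unchanged from Theorem~\ref{thm:rbmon}, and the only new ingredient---the first fundamental theorem compatibility $\overline{d}\circ\overline{P}=\id_{\overline{R}}$---follows immediately from $d'\circ P'=\id_{R'}$ once one observes that $\overline{d}$ and the splitting maps are already provided by the split coequalizer in $\DIF$.
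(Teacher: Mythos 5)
Your proposal is correct and follows exactly the route the paper intends: the published proof of Theorem~\ref{thm:drbmon} simply states that it is similar to that of Theorem~\ref{thm:rbmon}, and your argument is precisely that adaptation, reusing the Beck's Theorem strategy with $\overline{P}=e\circ P'\circ s$ and supplying the one genuinely new verification $\overline{d}\circ\overline{P}=\id_{\overline{R}}$, which you derive correctly from $d'\circ P'=\id_{R'}$ and the split coequalizer identities. No gaps.
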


\begin{proof}
The proof is similar to that of Theorem~\mref{thm:rbmon}.
\end{proof}

We will end the paper with the following diagram summarizing the constructions and main results of the paper.

\begin{theorem}
All the polygons in the following diagram commute.
\begin{equation}
\begin{array}{c}
\xymatrix{
&&& \DRB \ar@{}[dddlll]_{\quad\mathbf{(3)}} \ar@/_4pc/_{V'}[ddddlll] \ar@{~>}_{H'}[ddl] \ar@{~>}^{K'}[ddr] \ar@{}[dddrrr]^{\mathbf{(5)}}\ar@/^4pc/^{U'}[ddddrrr] &&& \\
&&&&&& \\
&& \RBA_{\C'} \ar_{V'_{\C'}}[ddll]\ar@{~>}_{\widetilde{K}}[dd] \ar@{}_{\mathbf{(8)}}[rr]&&
\DIF^{\T'} \ar@{~>}^{\widetilde{H}}[dd] \ar^{U'^{\T'}}[rrdd] &&\\
&&&&&&\\
\RBA \ar@{~>}^{K}[rrdd] \ar@/_4pc/_{U}[rrrdddd] \ar@{}^{\mathbf{(4)}}[rr]&& (\ALG^{\T})_{\widetilde{\C}} \ar_{V_{\tilde{\C}}}[dd] \ar@{~>}^\Phi[rr] && (\ALG_\C)^{\widetilde{\T}}\ar^{U^{\tilde{\T}}}[dd] &&\DIF \ar@{~>}_{H}[lldd] \ar@/^4pc/^{V}[ddddlll] \ar@{}_{\mathbf{(6)}}[ll]\\
&&&&&&\\
&& \ALG^\T \ar^{U^{\T}}[ddr] \ar@{}^{\mathbf{(7)}}[rr] && \ALG_{\C}\ar_{V_{\C}}[ddl]  && \\
&&&\ar@{}^{\mathbf{(1)}}[lllu] \ar@{}_{\mathbf{(2)}}[rrru]& &&\\
&&& \ALG &&&
}
\end{array}
\mlabel{eq:diag}
\end{equation}
Here all of the functors with labels of $U$ or $V$ or any variation of those are forgetful functors.  All of the $U$ functors have a left adjoint, and all of the $V$ functors have a right adjoint, and all of them generate monads or comonads that make up this diagram. On the other hand, all of the functors labeled $H$ or $K$ are isomorphisms of categories and are denoted by wiggled arrows.
\mlabel{thm:diag}
\end{theorem}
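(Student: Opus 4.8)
The plan is to dispatch the eight polygons one at a time, observing that each commutativity is of one of three kinds: a defining property of one of the comparison or cocomparison isomorphisms $K, H, K', H'$; a lifting identity already recorded in Corollary~\ref{coro:lifting} or Corollary~\ref{coro:isolift}; or a consequence of the explicit object-level formula for $\Phi$. The organizing principle is that every arrow in diagram~(\ref{eq:diag}) is either a forgetful functor (hence faithful) or an isomorphism of categories, and every category in sight is concrete over $\ALG$. Thus each verification reduces to tracking the underlying algebra together with the two operators attached to it, the Rota-Baxter operator coming from the $\T$-side and the $\lambda$-derivation coming from the $\C$-side.

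First, the four outer triangles are immediate. Triangle~$\mathbf{(1)}$ records $U^{\T}K = U$ and triangle~$\mathbf{(2)}$ records $V_{\C}H = V$, which are precisely the forgetful compatibilities built into the comparison functor $K$ of Theorem~\ref{thm:rbmon} and the cocomparison functor $H$ of Theorem~\ref{thm:comonadic}. Likewise triangle~$\mathbf{(3)}$ is the identity $V'_{\C'}H' = V'$ from Theorem~\ref{thm:Rcomonadic}, and triangle~$\mathbf{(5)}$ is $U'^{\T'}K' = U'$ from Theorem~\ref{thm:drbmon}. Next, square~$\mathbf{(4)}$, namely $V_{\widetilde{\C}}\widetilde{K} = K V'_{\C'}$, is exactly the content of Corollary~\ref{coro:isolift}, while square~$\mathbf{(6)}$, namely $U^{\widetilde{\T}}\widetilde{H} = H U'^{\T'}$, is its formal dual (apply Lemma~\ref{lem:liftiso} dualized to monad liftings, with $\A = \DIF$, $\B = \ALG_{\C}$, and $K = H$ the isomorphism of Theorem~\ref{thm:comonadic}).

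For the central polygon~$\mathbf{(7)}$, which asserts $U^{\T}V_{\widetilde{\C}} = V_{\C}U^{\widetilde{\T}}\Phi$, I would argue directly from the formula $\Phi\langle\langle A,h\rangle,f\rangle = \langle\langle A,f\rangle,h\rangle$. On an object $\langle\langle A,h\rangle,f\rangle$ of $(\ALG^{\T})_{\widetilde{\C}}$ both composites return the underlying algebra $A$, and on morphisms both return the underlying algebra homomorphism; since the functors in question act as the identity on underlying data, the square commutes on the nose. Equivalently, $\Phi$ is by construction a lift of the identity functor on $\ALG$, which is what Corollary~\ref{coro:lifting} encodes.

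The one polygon carrying genuine content is~$\mathbf{(8)}$, the compatibility $\Phi\,\widetilde{K}\,H' = \widetilde{H}\,K'$ of the two routes from $\DRB$ into $(\ALG_{\C})^{\widetilde{\T}}$, and this is where I expect the main work. The approach is an explicit object chase for a fixed $(R,d,P)\in\DRB$. Along the first route, $H'(R,d,P) = \langle (R,P),\tilde d\rangle$ with $(\tilde d(a))(n) = d^{(n)}(a)$; applying $\widetilde{K}$ (which is the identity on underlying data) gives $\langle\langle R,h_P\rangle,\tilde d\rangle\in(\ALG^{\T})_{\widetilde{\C}}$, where $h_P = U(\varepsilon_{(R,P)})$ is the Rota-Baxter $\T$-structure determined by $P$, and then $\Phi$ flips this to $\langle\langle R,\tilde d\rangle,h_P\rangle\in(\ALG_{\C})^{\widetilde{\T}}$. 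Along the second route, $K'(R,d,P) = \langle (R,d),U'(\varepsilon'_{(R,d,P)})\rangle$, whose structure map is given by the same formula $a_0\otimes\cdots\otimes a_n\mapsto a_0P(a_1P(\cdots P(a_n)\cdots))$, hence equals $h_P$; then $\widetilde{H}$ carries this to $\langle\langle R,\tilde d\rangle,h_P\rangle$ as well, using $H(R,d) = \langle R,\tilde d\rangle$. Under the identification of objects of $(\ALG_{\C})^{\widetilde{\T}}$ with triples $(R,h,f)$ recorded in diagrams~(\ref{eq:(ALG^T)_C}) and~(\ref{eq:(ALG_C)^T}), both composites therefore yield the single object built from $R$ with operators $P$ and $d$; the only point to check is that the Rota-Baxter structure coming from $\varepsilon$ and the differential costructure $\tilde d$ are the data assigned to $P$ and $d$ by the bijections following Theorem~\ref{thm:rbmon} and in Corollary~\ref{coro:1-1}, which is exactly the coincidence of formulas just noted. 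Since the two functors agree on objects and both send a morphism to its underlying homomorphism, faithfulness of the composite forgetful functor $V_{\C}U^{\widetilde{\T}}$ upgrades the object-level equality to equality on morphisms, completing~$\mathbf{(8)}$ and hence the theorem.
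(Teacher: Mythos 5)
Your proposal is correct and follows essentially the same route as the paper: triangles (1), (2), (3), (5) from the defining identities of the (co)comparison functors, squares (4) and (6) from the lifting lemma and its dual, polygon (7) from the explicit formula for $\Phi$, and polygon (8) by the same object chase showing both composites send $(R,d,P)$ to $\langle\langle R,\tilde d\rangle, U(\varepsilon_{(R,P)})\rangle$. The only difference is cosmetic: you make explicit the identifications $K(V'(\eta'_{(R,d,P)}))=V(\eta_{(R,d)})$ and $U(\varepsilon_{(R,P)})=H(U'(\varepsilon'_{(R,d,P)}))$ as a ``coincidence of formulas,'' which is exactly the step the paper records.
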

\begin{proof}
For any $(R,P)\in\RBA$, we have
$(U^{\mathbf{T}}K)(R,P)=U^{\mathbf{T}}\langle R,U(\varepsilon_{(R,P)})\rangle
=R=U(R,P)$,
that is, subdiagram (1) commutes.
Similarly, we verify that subdiagrams~(2), (3) and (5)  commute.

From the definition of $\widetilde{K}$ in Lemma \ref{lem:liftiso} and Corollary \ref{coro:isolift}, we obtain the commutativity of subdiagram~(4). Similarly, we find that subdiagram~(6) commutes.

For any $\langle\langle A,h\rangle,f\rangle\in(\ALG^{\T})_{\widetilde{\C}}$, we have
$$(V_{\mathbf{C}}U^{\widetilde{\mathbf{T}}}\Phi)\langle\langle A,h\rangle,f\rangle
=(V_{\mathbf{C}}U^{\widetilde{\mathbf{T}}})\langle\langle A,f\rangle,h\rangle
=V_{\mathbf{C}}\langle A,f\rangle=A$$ and
$$(U^{\mathbf{T}}V_{\widetilde{\mathbf{C}}})\langle\langle A,h\rangle,f\rangle
=U^{\mathbf{T}}\langle A,h\rangle=A.$$ Hence $(V_{\mathbf{C}}U^{\widetilde{\mathbf{T}}}\Phi)\langle\langle A,h\rangle,f\rangle=(U^{\mathbf{T}}V_{\widetilde{\mathbf{C}}})\langle\langle A,h\rangle,f\rangle$. That is, $V_{\mathbf{C}}U^{\widetilde{\mathbf{T}}}\Phi=U^{\mathbf{T}}V_{\widetilde{\mathbf{C}}}$. Then subdiagram (7) commutes.

For $(R,D,P)\in\DRB$, we have
\begin{eqnarray*}(\Phi \widetilde{K} H')(R,D,P)&=&(\Phi \widetilde{K})\langle(R,P),V'(\eta'_{(R,D,P)})\rangle\\
&=&\Phi\langle\langle R,U(\varepsilon_{(R,P)})\rangle,K(V'(\eta'_{(R,D,P)}))\rangle\\&=&
\langle\langle R,K(V'(\eta'_{(R,D,P)}))\rangle,U(\varepsilon_{(R,P)})\rangle\end{eqnarray*}
and
$$(\widetilde{H}K')(R,D,P) =\widetilde{H}\langle(R,D),U'(\varepsilon'_{{(R,D,P)})})\rangle
=\langle\langle R,V(\eta_{(R,D)})\rangle,H(U'(\varepsilon'_{(R,D,P)}))\rangle.$$
From $$K(V'(\eta'_{(R,D,P)}))=V(\eta_{(R,D)})$$ and $$U(\varepsilon_{(R,P)})=H(U'(\varepsilon'_{(R,D,P)})),$$ we get
$(\Phi \widetilde{K} H')(R,D,P)=(\widetilde{H}K')(R,D,P)$. That is, $\Phi \widetilde{K} H'=\widetilde{H}K'$.
Then subdiagram (8) commutes.
\end{proof}

It is important to note that the isomorphism $\Phi :   (\ALG^{\T})_{\widetilde{\C}} \rightarrow (\ALG_{\C})^{\widetilde{\T}}$ ties this diagram together, and that this isomorphism comes directly from the mixed distributive law of $\T$ over $\C$.
\smallskip

\noindent
{\bf Acknowledgements}:
This work is supported by the National Natural Science Foundation of China (Grant No. 11371178) and the National Science Foundation of US (Grant No. DMS~1001855).

\addcontentsline{toc}{section}{\numberline {}References}

\end{document}